\begin{document}
\newcommand\Mand{\ \text{and}\ }
\newcommand\Mor{\ \text{or}\ }
\newcommand\Mfor{\ \text{for}\ }
\newcommand\Real{\mathbb{R}}
\newcommand\RR{\mathbb{R}}
\newcommand\im{\operatorname{Im}}
\newcommand\re{\operatorname{Re}}
\newcommand\sign{\operatorname{sign}}
\newcommand\sphere{\mathbb{S}}
\newcommand\BB{\mathbb{B}}
\newcommand\HH{\mathbb{H}}
\newcommand\dS{\mathrm{dS}}
\newcommand\ZZ{\mathbb{Z}}
\newcommand\NN{\mathbb{N}}
\newcommand\codim{\operatorname{codim}}
\newcommand\Sym{\operatorname{Sym}}
\newcommand\End{\operatorname{End}}
\newcommand\Span{\operatorname{span}}
\newcommand\Ran{\operatorname{Ran}}
\newcommand\ep{\epsilon}
\newcommand\Cinf{\cC^\infty}
\newcommand\dCinf{\dot \cC^\infty}
\newcommand\CI{\cC^\infty}
\newcommand\dCI{\dot \cC^\infty}
\newcommand\Cx{\mathbb{C}}
\newcommand\Nat{\mathbb{N}}
\newcommand\dist{\cC^{-\infty}}
\newcommand\ddist{\dot \cC^{-\infty}}
\newcommand\pa{\partial}
\newcommand\Card{\mathrm{Card}}
\renewcommand\Box{{\square}}
\newcommand\Ell{\mathrm{Ell}}
\newcommand\Char{\mathrm{Char}}
\newcommand\WF{\mathrm{WF}}
\newcommand\WFh{\mathrm{WF}_\semi}
\newcommand\WFb{\mathrm{WF}_\bl}
\newcommand\WFsc{\mathrm{WF}_\scl}
\newcommand\WFscb{\mathrm{WF}_{\scl,\bl}}
\newcommand\Vf{\mathcal{V}}
\newcommand\Vb{\mathcal{V}_\bl}
\newcommand\Vsc{\mathcal{V}_\scl}
\newcommand\Vscsus{\mathcal{V}_\scsus}
\newcommand\Vz{\mathcal{V}_0}
\newcommand\Hb{H_{\bl}}
\newcommand\bHb{\bar H_{\bl}}
\newcommand\dHb{\dot H_{\bl}}
\newcommand\Hbb{\tilde H_{\bl}}
\newcommand\Hsc{H_{\scl}}
\newcommand\Hscb{H_{\scbl}}
\newcommand\bHscb{\bar H_{\scbl}}
\newcommand\dHscb{\dot H_{\scbl}}
\newcommand\Hscsus{H_{\scsus}}
\newcommand\Ker{\mathrm{Ker}}
\newcommand\Range{\mathrm{Ran}}
\newcommand\Hom{\mathrm{Hom}}
\newcommand\Id{\mathrm{Id}}
\newcommand\sgn{\operatorname{sgn}}
\newcommand\ff{\mathrm{ff}}
\newcommand\tf{\mathrm{tf}}
\newcommand\esssupp{\operatorname{esssupp}}
\newcommand\supp{\operatorname{supp}}
\newcommand\vol{\mathrm{vol}}
\newcommand\Diff{\mathrm{Diff}}
\newcommand\Diffd{\mathrm{Diff}_{\dagger}}
\newcommand\Diffs{\mathrm{Diff}_{\sharp}}
\newcommand\Diffb{\mathrm{Diff}_\bl}
\newcommand\Diffsc{\mathrm{Diff}_\scl}
\newcommand\Diffscsus{\mathrm{Diff}_\scsus}
\newcommand\DiffbI{\mathrm{Diff}_{\bl,I}}
\newcommand\Diffbeven{\mathrm{Diff}_{\bl,\even}}
\newcommand\Diffz{\mathrm{Diff}_0}
\newcommand\Psih{\Psi_{\semi}}
\newcommand\Psihcl{\Psi_{\semi,\cl}}
\newcommand\Psisc{\Psi_\scl}
\newcommand\Psiscc{\Psi_\sccl}
\newcommand\Psiscb{\Psi_\scbl}
\newcommand\Psib{\Psi_\bl}
\newcommand\Psibc{\Psi_{\mathrm{bc}}}
\newcommand\Psibcdelta{\Psi_{\mathrm{bc},\delta}}
\newcommand\TbC{{}^{\bl,\Cx} T}
\newcommand\Tb{{}^{\bl} T}
\newcommand\Sb{{}^{\bl} S}
\newcommand\Tsc{{}^{\scl} T}
\newcommand\Tscsus{{}^{\scsus} T}
\newcommand\Ssc{{}^{\scl} S}
\newcommand\Sscsus{{}^{\scsus} S}
\newcommand\Lambdab{{}^{\bl} \Lambda}
\newcommand\zT{{}^{0} T}
\newcommand\Tz{{}^{0} T}
\newcommand\zS{{}^{0} S}
\newcommand\dom{\mathcal{D}}
\newcommand\cA{\mathcal{A}}
\newcommand\cB{\mathcal{B}}
\newcommand\cE{\mathcal{E}}
\newcommand\cG{\mathcal{G}}
\newcommand\cH{\mathcal{H}}
\newcommand\cU{\mathcal{U}}
\newcommand\cO{\mathcal{O}}
\newcommand\cF{\mathcal{F}}
\newcommand\cM{\mathcal{M}}
\newcommand\cQ{\mathcal{Q}}
\newcommand\cR{\mathcal{R}}
\newcommand\cI{\mathcal{I}}
\newcommand\cL{\mathcal{L}}
\newcommand\cK{\mathcal{K}}
\newcommand\cC{\mathcal{C}}
\newcommand\cX{\mathcal{X}}
\newcommand\cY{\mathcal{Y}}
\newcommand\cXsus{\mathcal{X}_\sus}
\newcommand\cYsus{\mathcal{Y}_\sus}
\newcommand\cP{\mathcal{P}}
\newcommand\cS{\mathcal{S}}
\newcommand\cZ{\mathcal{Z}}
\newcommand\cW{\mathcal{W}}
\newcommand\Ptil{\tilde P}
\newcommand\ptil{\tilde p}
\newcommand\chit{\tilde \chi}
\newcommand\yt{\tilde y}
\newcommand\zetat{\tilde \zeta}
\newcommand\xit{\tilde \xi}
\newcommand\taut{{\tilde \tau}}
\newcommand\phit{{\tilde \phi}}
\newcommand\mut{{\tilde \mu}}
\newcommand\taubsemi{\tau_{\bl,\hbar}}
\newcommand\lambdasemi{\lambda_{\hbar}}
\newcommand\sigmat{{\tilde \sigma}}
\newcommand\sigmah{\hat\sigma}
\newcommand\zetah{\hat\zeta}
\newcommand\etah{\hat\eta}
\newcommand\taub{\tau_{\bl}}
\newcommand\mub{\mu_{\bl}}
\newcommand\taubh{\hat\tau_{\bl}}
\newcommand\mubh{\hat\mu_{\bl}}
\newcommand\nuh{\hat\nu}
\newcommand\loc{\mathrm{loc}}
\newcommand\compl{\mathrm{comp}}
\newcommand\reg{\mathrm{reg}}
\newcommand\GBB{\textsf{GBB}}
\newcommand\GBBsp{\textsf{GBB}\ }
\newcommand\bl{{\mathrm b}}
\newcommand\scl{{\mathrm{sc}}}
\newcommand\scbl{{\mathrm{sc,b}}}
\newcommand\sccl{{\mathrm{scc}}}
\newcommand\scsus{{\mathrm{sc-sus}}}
\newcommand\sus{{\mathrm{sus}}}
\newcommand{\sH}{\mathsf{H}}
\newcommand{\cte}{\digamma}
\newcommand\cl{\mathrm{cl}}
\newcommand\hsf{\mathcal{S}}
\newcommand\Div{\operatorname{div}}
\newcommand\hilbert{\mathfrak{X}}
\newcommand\smooth{\mathcal{J}}
\newcommand\decay{\ell}
\newcommand\symb{j}

\newcommand\Hh{H_{\semi}}

\newcommand\bM{\bar M}
\newcommand\Xext{X_{-\delta_0}}

\newcommand\xib{{\underline{\xi}}}
\newcommand\etab{{\underline{\eta}}}
\newcommand\zetab{{\underline{\zeta}}}

\newcommand\xibh{{\underline{\hat \xi}}}
\newcommand\etabh{{\underline{\hat \eta}}}
\newcommand\zetabh{{\underline{\hat \zeta}}}

\newcommand\zn{z}
\newcommand\sigman{\sigma}
\newcommand\psit{\tilde\psi}
\newcommand\rhot{{\tilde\rho}}

\newcommand\hM{\hat M}

\newcommand\Op{\operatorname{Op}}
\newcommand\Oph{\operatorname{Op_{\semi}}}

\newcommand\innr{{\mathrm{inner}}}
\newcommand\outr{{\mathrm{outer}}}
\newcommand\full{{\mathrm{full}}}
\newcommand\semi{\hbar}

\newcommand\Feynman{\mathrm{Feynman}}
\newcommand\future{\mathrm{future}}
\newcommand\past{\mathrm{past}}

\newcommand\elliptic{\mathrm{ell}}
\newcommand\diffordgen{k}
\newcommand\difford{2}
\newcommand\diffordm{1}
\newcommand\diffordmpar{1}
\newcommand\even{\mathrm{even}}
\newcommand\dimn{n}
\newcommand\dimnpar{n}
\newcommand\dimnm{n-1}
\newcommand\dimnp{n+1}
\newcommand\dimnppar{(n+1)}
\newcommand\dimnppp{n+3}
\newcommand\dimnppppar{n+3}

\newcommand\sob{s}

\newcommand\res{\mathrm{res}}
\newcommand\WFbr{\mathrm{WF}_{\bl,\res}}
\newcommand\Tbr{{}^{\bl,\res} T}
\newcommand\Sbr{{}^{\bl,\res} S}
\newcommand\Psibr{\Psi_{\bl,\res}}
\newcommand\Psibcr{\Psi_{\mathrm{bc},\res}}
\newcommand\Psiscbr{\Psi_{\scl,\bl,\res}}
\newcommand\tausc{\tau_{\res}}
\newcommand\musc{\mu_{\res}}
\newcommand\tausch{\hat\tau_{\res}}
\newcommand\musch{\hat\mu_{\res}}
\newcommand\Hscbr{H_{\scl,\bl,\res}}

\newcommand\nuc{\nu^c}

\newtheorem{lemma}{Lemma}[section]
\newtheorem{prop}[lemma]{Proposition}
\newtheorem{thm}[lemma]{Theorem}
\newtheorem{cor}[lemma]{Corollary}
\newtheorem{result}[lemma]{Result}
\newtheorem*{thm*}{Theorem}
\newtheorem*{prop*}{Proposition}
\newtheorem*{cor*}{Corollary}
\newtheorem*{conj*}{Conjecture}
\numberwithin{equation}{section}
\theoremstyle{remark}
\newtheorem{rem}[lemma]{Remark}
\newtheorem*{rem*}{Remark}
\theoremstyle{definition}
\newtheorem{Def}[lemma]{Definition}
\newtheorem*{Def*}{Definition}

\newcommand{\mar}[1]{{\marginpar{\sffamily{\scriptsize #1}}}}
\newcommand\av[1]{\mar{AV:#1}}

\renewcommand{\theenumi}{\roman{enumi}}
\renewcommand{\labelenumi}{(\theenumi)}

\title[Resolvent near zero energy, a Lagrangian approach]{Resolvent
  near zero energy on Riemannian scattering (asymptotically conic)
  spaces, a Lagrangian approach}
\author[Andras Vasy]{Andr\'as Vasy}
\address{Department of Mathematics, Stanford University, CA 94305-2125, USA}

\email{andras@math.stanford.edu}

\subjclass[2000]{Primary 35P25; Secondary 58J50, 58J40, 35L05, 58J47}

\thanks{The author gratefully
  acknowledges partial support from the NSF under grant number
  DMS-1664683 and from a Simons Fellowship.}
\date{\today. Original version: May 29, 2019}

\begin{abstract}
We use a Lagrangian regularity perspective to discuss resolvent estimates near zero energy on Riemannian scattering, i.e.\
asymptotically conic, spaces, and their generalizations. In addition
to the Lagrangian perspective we introduce and use a resolved
pseudodifferential algebra to deal with zero energy degeneracies in a
robust manner.
\end{abstract}

\maketitle

\section{Introduction and outline}
The purpose of this paper is to describe the low energy behavior of the
resolvent on Riemannian scattering spaces $(X,g)$ of dimension $n\geq 3$ using a description that
focuses on the outgoing radial set by providing Lagrangian regularity
estimates. These spaces, introduced by Melrose \cite{RBMSpec}, are Riemannian manifolds which are asymptotic
to the `large end' of a cone; one example is asymptotically Euclidean spaces.
For $\sigma\neq 0$, including the
$|\sigma|\to\infty$ limit,
$\Delta_g-\sigma^2$ was studied in
\cite{Vasy:Limiting-absorption-lag}. We refer to the introduction of
that paper for a comparison of this approach, which uses a conjugation
to move the outgoing `spherical wave' asymptotics to the zero section
followed by second microlocalized at the zero section scattering
analysis (which means scattering-b analysis) on the one hand, and the more standard,
variable order space approach that in one way or another underlies a
number of the
proofs of the limiting absorption principle, including in
dynamical systems settings \cite{RBMSpec, Vasy:Minicourse,
  Faure-Sjostrand:Upper}, on the other. The recent paper \cite{Vasy:Zero-energy}
also analyzed the $\sigma\to 0$ behavior from the variable order
perspective; though this paper also used second microlocal techniques,
the reason was different: the degeneration of the characteristic set
as $\sigma\to 0$. In this paper we provide an alternative treatment to
\cite{Vasy:Zero-energy} that matches
\cite{Vasy:Limiting-absorption-lag}. We recall here that another area
in which a Lagrangian regularity (though without second microlocalization)
investigation has proved fruitful recently is describing internal waves in fluids,
see \cite{Dyatlov-Zworski:Forced}.

The study of the $\sigma\to 0$ limit has a long history, going back to
the work of Jensen and Kato \cite{Jensen-Kato:Spectral} in the
Euclidean setting. More recently
Guillarmou and Hassell analyzed this behavior in a series of
works
\cite{Guillarmou-Hassell:Resolvent-I,Guillarmou-Hassell:Resolvent-II}
via constructing a parametrix for the resolvent family; here we
proceed by {\em directly} obtaining Fredholm estimates. Other recent
works on the subject include those of  Bony and H\"afner
\cite{Bony-Haefner:Low}, Rodnianski and Tao
\cite{Rodnianski-Tao:Effective} and M\"uller and Strohmaier
\cite{Muller-Strohmaier:Hahn}; we refer to \cite{Vasy:Zero-energy} for
more details.

{\em This paper is intended as a companion paper to 
\cite{Vasy:Limiting-absorption-lag}, so the reader is advised to read 
that paper first for a more detailed introduction to the setting,
including the b- and scattering pseudodifferential operator algebras
and their relation to analysis on $\RR^n$, as 
well as for additional references.}

We recall that
second microlocal, spaces, see \cite[Section~5]{Vasy:Zero-energy} in this
scattering context, and see
\cite{Bony:Second,Vasy-Wunsch:Semiclassical} in different
contexts, play a role
in precise analysis at a Lagrangian, or more generally coisotropic,
submanifold. As mentioned above, these second microlocal
techniques played a role in \cite{Vasy:Zero-energy} due to the
degeneration of the principal symbol at zero energy, corresponding to
the quadratic vanishing of any dual metric function at the zero
section; the chosen Lagrangian is thus the zero section, really
understood as the zero section at infinity. In
a somewhat simpler way than in other cases, this second microlocalization at the zero
section is accomplished by simply using the
b-pseudodifferential operator algebra of Melrose \cite{Melrose:Atiyah}. In an informal way, this arises
by blowing up the zero section of the scattering cotangent bundle at
the boundary, though a more precise description (in that it makes
sense even at the level of
quantization, the spaces themselves are naturally diffeomorphic)  is
the reverse: blowing up the corner (fiber infinity over the boundary)
of the b-cotangent bundle. In \cite{Vasy:Zero-energy} this was used to
show a uniform version of the resolvent estimates down to zero energy
using variable differential order b-pseudodifferential
operators. Indeed, the
differential order of these, cf.\ the aforementioned blow-up of the
corner, corresponds to the scattering decay order away from the zero
section, thus this allows the uniform analysis of the problem to zero energy.
However, for this problem the decay order (of the b-ps.d.o.) is also crucial, for it corresponds to
the spaces on which the exact zero energy operator is Fredholm of
index zero, which,
with $\Hb$ denoting weighted b-Sobolev spaces relative to the
scattering (metric) $L^2$-density, are $\Hb^{\tilde r,l}\to\Hb^{\tilde
  r-2,l+2}$ with $|l+1|<\frac{n-2}{2}$, where $\tilde r$ is the
variable order (which is irrelevant at zero energy since the operator
is elliptic in the b-pseudodifferential algebra then). (The more
refined, fully 2-microlocal, spaces, corresponding to the blow-up of
the corner, have three orders: sc-differential,
sc-decay/b-differential and b-decay; using all of these is convenient,
as the operators are sc-differential-elliptic, so one can use easily that
there are no constraints on regularity in that sense; this modification is not crucial.)

Now, first for $\sigma\neq 0$ real, one can work in a second microlocal
space by simply conjugating the spectral family $P(\sigma)$ by
$e^{i\sigma/x}$ (this being the multiplier from the right),
with the point being that this conjugation acts as a canonical
transformation of the scattering cotangent bundle, moving the outgoing
radial set to the zero section. Then the second microlocal analysis
is simply a refinement of b-analysis. Indeed, note that this conjugation
moves $x^{-1}(x^2D_x+\sigma)$, resp.\ $x^{-1}(xD_{y_j})$, to
$x^{-1}(x^2D_x)=xD_x$, resp.\ $x^{-1}(xD_{y_j})=D_{y_j}$, so the
Lagrangian regularity becomes b-differential-regularity indeed. Notice
that the conjugate of the simplest model operator
$$
P(\sigma)=(x^2D_x)^2+i(n-1)x(x^2 D_x)+x^2\Delta_y-\sigma^2\in \Diffsc^2(X)\subset\Diffb^2(X)
$$
is then
\begin{equation*}\begin{aligned}
\hat
P(\sigma)&=e^{-i\sigma/x}P(\sigma)e^{i\sigma/x}=(x^2D_x-\sigma)^2+i(n-1)x(x^2
D_x-\sigma)+x^2\Delta_y-\sigma^2\\
&=(x^2D_x)-2\sigma(x^2D_x)+i(n-1)x(x^2
D_x)-i(n-1)x\sigma +x^2\Delta_y\in x\Diffb^2(X),
\end{aligned}\end{equation*}
which has one additional order of vanishing in this b-sense. (This is
basically the effect of the zero section of the sc-cotangent bundle
being now in the characteristic set.) Moreover, to leading order in
terms of the b-decay sense, i.e.\ modulo $x^2\Diffb^2(X)$, this is the
simple first order operator
$$
-2\sigma x\Big(xD_x+i\frac{n-1}{2}\Big).
$$
(In general, decay is
controlled by the normal operator of a b-differential operator,
which arises by setting $x=0$ in its coefficients after factoring out an
overall weight, and where one thinks of it as acting on functions on
$[0,\infty)_x\times\pa X$, of which $[0,\delta_0)_x\times\pa X$ is
identified with a neighborhood of $\pa X$ in $X$.)
This is non-degenerate for $\sigma\neq 0$ in that, on suitable spaces,
it has an invertible normal operator; of course, this is not an
elliptic operator, so some care is required. Notice that terms like
$(x^2D_x)^2$ and $\sigma x^2D_x$ have the same scattering decay order,
i.e.\ on the front face of the blown up b-corner they are equally
important. Thus, one does real principal type plus radial points
estimates in this case to conclude a Fredholm statement
\begin{equation}\label{eq:real-princ-b-orders}
\hat P(\sigma):\{u\in \Hb^{\tilde r,l}:\hat P(\sigma)u\in \Hb^{\tilde r,l+1}\}\to\Hb^{\tilde r,l+1},
\end{equation}
and here $l<-1/2$, $\tilde r+l>-1/2$ constant work. (Note that $\tilde
r+l$ is the scattering decay order away from the zero section.) We
refer to \cite{Vasy:Limiting-absorption-lag} for the proof.

Notice that, in terms of the limiting absorption principle, there are
two ways to implement this conjugation: one can conjugate either by
$e^{i\sigma/x}$, where $\sigma$ is now complex, or by
$e^{i\re\sigma/x}$. The former, which we follow, gives much stronger
spaces when $\sigma$ is not real with $\im\sigma>0$ (which is from where we
take the limit), as $e^{i\sigma/x}$ entails an
exponentially decaying weight $e^{-\im\sigma/x}$, so if the original
operator is applied to $u$, the conjugated operator is applied to
$e^{\im\sigma/x} u$.

One cannot expect an estimate that does
not already hold for the elliptic operator $\hat P(0)=P(0)$, and in
that case one has a Fredholm elliptic estimate in which the b-decay
order changes by $2$. Nonetheless, between spaces whose b-decay order
differs by 2 one might expect non-degenerate, uniform, estimate, as
was done in the unconjugated setting in \cite{Vasy:Zero-energy}. We might expect that this conjugated approach will be in some
ways more restrictive than the unconjugated one because two of the
three phenomena constraining orders (incoming and outgoing radial
points, and indicial roots, which are points of normal operator non-invertibility) are realized in the b-decay sense, namely
both the outgoing radial set and the indicial root phenomena take
place here.

This can be remedied by using a resolved version of the
b-pseudodifferential algebra which also allows for weights vanishing
precisely at $x=\sigma=0$, thus of the form $(x+|\sigma|)^\alpha$. We state the
estimates for resolved scattering-b Sobolev spaces $\Hscbr^{s,r,l}(X)$, or rather norms
(akin to the semiclassical spaces these are really a family of norms
on the same space), defined in
Section~\ref{sec:resolved} in \eqref{eq:Hscbr-def}. However, we already state that if $s=r-l$, this
is simply the standard b-Sobolev space $\Hb^{r-l,l}(X)$ with the
standard, $\sigma$-independent, norm, see \eqref{eq:Hscbr-Hb}; given $r,l$ satisfying the
hypotheses below, such a choice of $s$ is always acceptable for
$\Hscbr^{s,r,l}(X)$; then the $\Hscbr^{s-2,r+1,l+1}(X)$ norm on $\hat
P(\sigma)u$ can be strengthened to $\Hscbr^{s,r+1,l+1}=\Hb^{r-l,l+1}$ (thus the
estimate weakened) which is of the same form, but this is quite
lossy; we give below in \eqref{eq:main-b} a better (but still lossy) version.

We formulate the general theorem below in Section~\ref{sec:operator},
in Theorem~\ref{thm:main-gen}, where various additional notations are
introduced. However, here we state
our main theorem for the spectral family $P(\sigma)$ for an
asymptotically conic Laplacian $\Delta_g$:

\begin{thm}\label{thm:main}
Suppose that $|l'+1|<\frac{n-2}{2}$, and suppose that
$P(0):\Hb^{\infty,l'}\to\Hb^{\infty,l'+2}$ has trivial nullspace, an
assumption independent of $l'$ in this range. Suppose also that either
$r>-1/2$, $l<-1/2$, or $r<-1/2$, $l>-1/2$. Let
$$
\hat
P(\sigma)=e^{-i\sigma/x}P(\sigma)e^{i\sigma/x}.
$$

There exists
$\sigma_0>0$ such
that
$$
\hat P(\sigma):\{u\in\Hscbr^{s,r,l}:\ \hat P(\sigma)u\in \Hscbr^{s-2,r+1,l+1}\}\to\Hscbr^{s-2,r+1,l+1}
$$
is invertible for $0<|\sigma|\leq\sigma_0$, $\im\sigma\geq 0$, with this inverse being the $\pm i0$ resolvent of
$P(\sigma)$ corresponding to $\pm\re\sigma>0$, and we have the
estimate
\begin{equation*}\begin{aligned}
&\|(x+|\sigma|)^{\alpha}u\|_{\Hscbr^{s,r,l}}\leq C\|(x+|\sigma|)^{\alpha-1}\hat
P(\sigma)u\|_{\Hscbr^{s-2,r+1,l+1}}
\end{aligned}\end{equation*}
for
$$
\alpha\in\Big(l+1-\frac{n-2}{2},l+1+\frac{n-2}{2}\Big).
$$
\end{thm}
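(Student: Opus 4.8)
The plan is to deduce this from the general operator-level statement, Theorem~\ref{thm:main-gen}, so the first step is to check that the conjugated spectral family $\hat P(\sigma)$ of an asymptotically conic Laplacian meets its hypotheses. As in the model computation of the introduction, one verifies that $\hat P(\sigma)\in x\Diffb^2(X)$, that its scattering-differential principal symbol is the dual metric — hence $\hat P(\sigma)$ is sc-differential-elliptic, which is why the order $s$ in $\Hscbr^{s,r,l}$ carries no constraint — and then identifies the two radial sets: the incoming one, at b-fiber infinity, whose location depends on $\sign\re\sigma$, and the outgoing one, which conjugation by $e^{i\sigma/x}$ has pushed onto the zero section, i.e.\ onto the front face of the resolved space. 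At the incoming radial set, and equally at the zero section, the relevant indicial root / radial-point threshold sits at $-1/2$ (as in \cite{Vasy:Limiting-absorption-lag}, this $-1/2$ coming from the $i\frac{n-1}{2}$ in $\hat P(\sigma)$ together with the scattering-to-b density shift), so the two admissible regimes $r>-1/2,\ l<-1/2$ and $r<-1/2,\ l>-1/2$ are the two consistent placements of the orders relative to these thresholds — the first being the one directly suited to the $\pm i0$ resolvent, with the precise placement fixed by $\pm\re\sigma>0$ together with $\im\sigma\geq0$, and the second following by duality. The indicial roots of the normal operator of $P(0)$ at the zero section, lying symmetrically about $l'=-1$ at distance $\frac{n-2}{2}$, are then exactly what produce the constraint $|l'+1|<\frac{n-2}{2}$ and the window for $\alpha$. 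Note that when $s=r-l$ the resolved space is just $\Hb^{r-l,l}$ with $\sigma$-independent norm, so for each fixed $\sigma\neq0$ the bare invertibility statement is that of \cite{Vasy:Limiting-absorption-lag}; the new content is the \emph{uniformity} as $\sigma\to0$, carried by the weight $(x+|\sigma|)^{\alpha}$ and the resolved calculus.

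With the hypotheses in hand, I would assemble the uniform a priori estimate
\[
\|(x+|\sigma|)^{\alpha}u\|_{\Hscbr^{s,r,l}}\leq C\|(x+|\sigma|)^{\alpha-1}\hat P(\sigma)u\|_{\Hscbr^{s-2,r+1,l+1}}+C\|(x+|\sigma|)^{\alpha}u\|_{\Hscbr^{s',r',l'}},
\]
with $(s',r',l')$ strictly weaker, by running the positive-commutator scheme inside the resolved algebra $\Psiscbr$: sc-differential elliptic regularity to dispose of $s$; real-principal-type propagation through the characteristic set away from the radial sets; a radial-point estimate at the incoming radial set using whichever sign condition on $r$ ($r>-1/2$ or $r<-1/2$) matches the propagation direction; and — the genuinely new ingredient — a radial-point/normal-operator estimate at the zero section carried out uniformly down to $\sigma=0$ in the resolved calculus. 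The point that makes the last step uniform is that away from the resolved front face (where $|\sigma|$ is bounded below and $x\to0$) the factor $(x+|\sigma|)^{\alpha}$ is an order-zero b-weight and the estimate reduces to the $\sigma\neq0$ estimate of \cite{Vasy:Limiting-absorption-lag}, whose normal operator is $-2\sigma x\big(xD_x+i\frac{n-1}{2}\big)$ and which shifts the b-decay order by $1$, whereas on the front face $(x+|\sigma|)$ degenerates to $x$ and the model becomes $P(0)$, elliptic in the b-calculus, which shifts the b-decay order by $2$; the resolved scale interpolates between these two normal operators, which is exactly why it is $(x+|\sigma|)^{-1}$, not $x^{-1}$, that appears on the right. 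The window on $\alpha$ emerges from this last estimate: after the weight, the solution has effective b-decay order $l-\alpha$ at the end of the front face where $x$ dominates $|\sigma|$, and this must lie in the Fredholm window $|l-\alpha+1|<\frac{n-2}{2}$ of $P(0)$, i.e.\ $\alpha\in\big(l+1-\frac{n-2}{2},\ l+1+\frac{n-2}{2}\big)$.

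It then remains to absorb the error term and to identify the inverse. For the former I would argue by contradiction and compactness: given $\sigma_j$ and $u_j$ with $\|(x+|\sigma_j|)^{\alpha}u_j\|_{\Hscbr^{s,r,l}}=1$ and $\hat P(\sigma_j)u_j\to0$ in the stronger space, pass to a subsequence — either $\sigma_j\to\sigma_\infty\neq0$, whence the limit is a nonzero element of the kernel of the $\sigma_\infty$ problem, excluded by the $\sigma\neq0$ invertibility of \cite{Vasy:Limiting-absorption-lag}, or $\sigma_j\to0$, whence mass can only escape to the resolved front face and the rescaled limit $v$ solves $P(0)v=0$ in a weighted b-Sobolev space $\Hb^{\infty,l'}$ with $|l'+1|<\frac{n-2}{2}$ (the window on $\alpha$ being precisely what places $v$ there), so $v=0$ by the standing nullspace hypothesis — a contradiction. (The independence of that hypothesis on $l'$ in the stated range is the usual indicial-root / boundary-pairing argument.) Having removed the error one has uniform invertibility; since the Fredholm inverse depends continuously on $\sigma$ for $\im\sigma\geq0$ and for $\im\sigma>0$ agrees with the conjugated true resolvent $e^{-i\sigma/x}(\Delta_g-\sigma^2)^{-1}e^{i\sigma/x}$, by continuity it is the $\pm i0$ limiting resolvent for $\pm\re\sigma>0$. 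The main obstacle, I expect, is precisely the uniform-in-$\sigma$ radial-point/normal-operator estimate at the zero section within the resolved calculus, together with the matching concentration-compactness step at the front face that identifies the model as $P(0)$; once the resolved algebra of Section~\ref{sec:resolved} and its microlocal toolkit are in place, the rest is a transcription of the $\sigma\neq0$ scheme of \cite{Vasy:Limiting-absorption-lag}.
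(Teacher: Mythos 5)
Your overall architecture does track the paper's: resolved-calculus symbolic estimates, an extra input at the front face, a compactness argument using the triviality of $\Ker P(0)$ on $\Hb^{\infty,l'}$, $|l'+1|<\frac{n-2}{2}$, and the fixed-$\sigma$ Fredholm/index-zero results of \cite{Vasy:Limiting-absorption-lag} to convert the uniform estimate into invertibility and identify the inverse. But there is a genuine gap exactly at the step you yourself call the main obstacle. First, the claim that the positive-commutator scheme alone produces an a priori estimate with error $\|(x+|\sigma|)^{\alpha}u\|_{\Hscbr^{s',r',l'}}$ with $(s',r',l')$ strictly weaker cannot hold: the commutant is a weight in the sc-differential and sc-decay orders only, so the symbolic estimates of Section~\ref{sec:symbolic} gain nothing in the b-decay order; the error they leave, $\|(1+x/|\sigma|)^{\alpha}u\|_{\Hscbr^{-N,-N,l}}$, carries the \emph{same} $l$, is not compact, and cannot be absorbed by your contradiction argument. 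Second, the ingredient that supplies the missing gain (the $x^{\delta}$ improvement making the error compact) is the normal operator estimate, Proposition~\ref{prop:zero-normal}, and your ``the resolved scale interpolates between the two normal operators'' heuristic is not a proof of it. The paper proves it by using that the effective normal operator $N_0(\hat P(\sigma))$ is jointly dilation invariant in $(x,\sigma)$: rescaling $X=x/|\sigma|$ turns it into $|\sigma|^2\tilde P(\hat\sigma)$, a fixed unit-energy conjugated spectral family on an exact cone with a scattering end \emph{and} an interior conic point, to which the limiting absorption estimate of \cite{Vasy:Limiting-absorption-lag} applies after being extended to allow conic points, with the conic weight taken in the central interval $(\nuc_-,\nuc_+)$ --- this, via $\alpha=\nu+l$, is where the window $\alpha\in\big(l+1-\frac{n-2}{2},\,l+1+\frac{n-2}{2}\big)$ actually originates; pulling the estimate back through $\kappa_\sigma$ yields precisely the $(1+x/|\sigma|)^{\alpha}$-weighted bound with $(x+|\sigma|)^{-1}$ on the right. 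Without this (or an equivalent) argument your scheme does not close.

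A smaller but real inaccuracy sits in your compactness step: concentration at the front face, i.e.\ at scale $x\sim|\sigma_j|$, is not excluded by the $P(0)$-nullspace hypothesis --- the rescaled front-face model is $\tilde P(\hat\sigma)$, which is precisely what the normal operator estimate controls. In the paper no rescaled limit is taken: one shows directly on $X$ that $(x+\sigma_j)^{\alpha-1}\hat P(\sigma_j)u_j\to x^{\alpha-1}\hat P(0)x^{-\alpha}v$, and that $v\neq 0$ because the $x^{\delta}$-improved error gives strong convergence in a weaker norm while Proposition~\ref{prop:b-res-imp-est} gives a lower bound; then $x^{-\alpha}v$ is a nontrivial element of $\Ker P(0)$ at weight $l-\alpha$, which your condition $|l-\alpha+1|<\frac{n-2}{2}$ correctly places in the forbidden range --- that part of your endgame is right. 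Note also that since the estimate is only asserted for $0<|\sigma|\le\sigma_0$, its failure forces $\sigma_j\to 0$, so your case $\sigma_j\to\sigma_\infty\neq 0$ is unnecessary.
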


\begin{rem}\label{rem:main-b}
We remark that the estimate implies the following estimate purely in terms
of b-Sobolev spaces. Suppose $s=r-l$, with $r,l$ as in the theorem. Then
\begin{equation}\begin{aligned}\label{eq:main-b}
&\|(x+|\sigma|)^{\alpha}u\|_{\Hb^{s,l}}\leq C\|(x+|\sigma|)^{\alpha}\hat
P(\sigma)u\|_{\Hb^{s-1,l+2}}.
\end{aligned}\end{equation}
We refer to \eqref{eq:main-b-proof} below for its proof, and to
\eqref{eq:main-scb-proof} for a further strengthened (non-resolved)
scattering-b, i.e.\ second microlocal, statement, which nonetheless is
still weaker than the main theorem.
\end{rem}

Note that in case we want $l<-1/2$ (so that the b-decay order is low,
but b-differentiability/sc-decay is high), one can always (for $n\geq
2$, e.g.\ $l$ close to $-1/2$) take
$\alpha=0$ for suitable $l$, while if we want $l>-1/2$ (so that the b-decay order is high,
but b-differentiability/sc-decay is low) we can only do this for
$n\geq 4$, otherwise the lower limit of the $\alpha$ interval is $>0$.

We remark that in spite of the earlier indication that two of the three
phenomena constraining orders take place at the same location, for the $l<-1/2$ case in 3 dimensions one may expect,
as we prove, an optimal 
result with $\alpha=0$ because the decay order of spherical waves (which is the 
threshold order for the outgoing Lagrangian) and that of the zero 
energy Green's function (which is the beginning of the interval 
allowed by the indicial roots) are the same: $x=r^{-1}$.

Note also the estimate can be rewritten as
\begin{equation*}\begin{aligned}
&\|(1+x/|\sigma|)^{\alpha}u\|_{\Hscbr^{s,r,l}}\leq C\|(1+x/|\sigma|)^{\alpha}(x+|\sigma|)^{-1}\hat
P(\sigma)u\|_{\Hscbr^{s-2,r+1,l+1}}.
\end{aligned}\end{equation*}

The structure of this paper is the following. In
Section~\ref{sec:operator} we describe the structure of the general
class of operators we are considering. In Section~\ref{sec:resolved}
we introduce the resolved pseudodifferential algebras that allow for
precise estimates down to $\sigma=0$. In Section~\ref{sec:symbolic} we
obtain symbolic estimates whose errors gain in the scattering differential and
decay orders. In Section~\ref{sec:normal} we remove those errors
using a normal operator estimate that is obtained by reducing to the
non-zero spectral parameter case analyzed in
\cite{Vasy:Limiting-absorption-lag} via a rescaling argument. Finally
in Section~\ref{sec:zero} we discuss what happens in a simple case
when $P(0)$ does have a non-trivial nullspace.

I am very grateful for numerous discussions with Peter Hintz, various
projects with whom have formed the basic motivation for this work, and
whose comments helped improve this manuscript.
I
also thank Dietrich H\"afner and  Jared Wunsch for their interest in
this work which helped to push it towards completion.

\section{The operator}\label{sec:operator}
First recall the framework in which the $\sigma\to 0$ behavior was analyzed in the
unconjugated setting in \cite{Vasy:Zero-energy}.
To start with, we have a scattering metric $g\in S^0(\Tsc^*X\otimes_s\Tsc^*X)$
for which there is an actually conic metric
$g_0=x^{-4}\,dx^2+x^{-2}h$, $h$ a Riemannian metric on $\pa X$, to
which $g$ is asymptotic in the sense that
$g-g_0\in S^{-\tilde\delta}(X,\Tsc^*X\otimes_s\Tsc^*X)$, $\tilde\delta>0$. Then
\begin{equation*}\begin{aligned}
&P(\sigma)=P(0)+\sigma Q-\sigma^2,\\
&\qquad P(0)\in S^{-2}\Diffb^2(X),\ Q\in
S^{-2-\tilde\delta}\Diffb^1(X),\ P(0)-\Delta_g\in S^{-2-\tilde\delta}\Diffb^2(X)
\end{aligned}\end{equation*}
thus also
$$
P(0)-\Delta_{g_0}\in S^{-2-\tilde\delta}\Diffb^2(X).
$$
In the present paper we obtain more precise information than in
\cite{Vasy:Zero-energy}, but under assumptions which are stronger on
the highest (second) order terms, though more relaxed on the lower order terms,
namely $Q$, as well as $\sigma^2$ terms (though we recall that in
\cite{Vasy:Zero-energy} $Q$ was allowed to smoothly depend on $\sigma$). Thus, we
take $\tilde\delta=1$, and we impose the existence of leading terms, so
$$
g-g_0\in x\CI (X,\Tsc^*X\otimes_s\Tsc^*X)
+S^{-1-\delta}(X,\Tsc^*X\otimes_s\Tsc^*X),\qquad\delta>0,
$$
for the metric. We allow a more general form for the operator in terms
of the coefficient of $\sigma^2$:
\begin{equation*}\begin{aligned}
&P(\sigma)=P(0)+\sigma Q-\sigma^2(1-R),
\end{aligned}\end{equation*}
and we take
\begin{equation}\begin{aligned}\label{eq:PQR-b-def}
&P(0)-\Delta_g\in x^2\Diffb^1(X)+S^{-2-\delta}\Diffb^1(X)\subset x\Diffsc^1(X)+S^{-1-\delta}\Diffsc^1(X),\\
&Q\in x\Diffsc^1(X)+S^{-1-\delta}\Diffsc^1(X)\\
&\qquad\qquad=x\CI(X)+S^{-1-\delta}(X)+x^2\Diffb^1(X)+S^{-2-\delta}\Diffb^1(X),\\
&R\in x\CI(X)+S^{-1-\delta}(X),
\end{aligned}\end{equation}
thus also
$$
P(0)-\Delta_{g_0}\in x^2\Diffb^1(X)+S^{-2-\delta}\Diffb^2(X).
$$
Note that the membership of $P(0)-\Delta_g$ in $x\Diffsc^1(X)+S^{-1-\delta}\Diffsc^1(X)$ is the condition used in
\cite{Vasy:Limiting-absorption-lag} and indeed any $P(\sigma)$
satisfying the requirements here satisfies those of \cite{Vasy:Limiting-absorption-lag}; the requirement here is stronger as
it rules out terms in $x\CI(X)+S^{-1-\delta}(X)$, such as Coulomb type
potentials. However, such $x\CI(X)+S^{-1-\delta}(X)$ terms are
allowed in $Q$ and $R$ due to the prefactor $\sigma$ or $\sigma^2$
present in front of them.

In \cite{Vasy:Limiting-absorption-lag} there was no need for stronger
assumptions for the skew-adjoint parts of operators, essentially
because they are subprincipal in terms of sc-decay, so while they
affect the statements (via shifting threshold regularity values), they
can be handled. Here, for our more delicate problem,
we also demand the stronger statements that
\begin{equation}\label{eq:Im-P}
\frac{1}{2i}(P(0)-P(0)^*)-\beta_I x \Big(x^2
D_x+ix\frac{n-2}{2}\Big)-\beta'_I x^2\in
S^{-2-\delta}\Diffb^1(X)\subset S^{-1-\delta}\Diffsc^1(X)
\end{equation}
for some $\beta_I,\beta'_I\in\CI(X)$ (which can simply be thought of as
functions on $\pa X$, as the $x\CI(X)$ terms can be absorbed into the
right hand side, and the $ix\frac{n-2}{2}$ term is included in $\beta_I$
as opposed to $\beta'_I$ since $x (x^2 D_x+ix\frac{n-2}{2})$ is
formally self-adjoint modulo terms that can be incorporated into the
right hand side), and
\begin{equation}\label{eq:Im-Q}
\frac{1}{2i}(Q-Q^*)-\gamma_I x\in S^{-1-\delta}(X)+S^{-2-\delta}\Diffb^1(X)=S^{-1-\delta}\Diffsc^1(X),
\end{equation}
for suitable $\gamma_I\in\CI(X)$, and
\begin{equation}\label{eq:Im-R}
\frac{1}{2i}(R-R^*)\in S^{-1-\delta}(X).
\end{equation}

As, $P(\sigma)\in\Psib^{2,0}$ only, and in the usual
sense the normal operator in $\Psib^{2,0}$ is simply $-\sigma^2$ as
$P(\sigma)+\sigma^2\in\Psib^{2,-2}$. Thus, in \cite{Vasy:Zero-energy} we instead considered the
`effective normal operator', quotienting the operator by
$S^{-2-\tilde\delta}\Diffb^2(X)$, which under the assumptions of
\cite{Vasy:Zero-energy} yields
$$
\tilde N(P(\sigma))=N(P(0))-\sigma^2=\Delta_{g_0}-\sigma^2,
$$
so
$$
P(\sigma)-\tilde N(P(\sigma))\in S^{-2-\tilde\delta}\Diffb^2(X);
$$
this difference was irrelevant for the analysis of b-decay. Here, due to our weaker assumptions on $Q$ as well as $P(0)$, the aforementioned extended normal
operator would in fact also include the leading order terms of $Q$ as
they are in $x^2\Diffb^1(X)$, and it would also include the
$x^2\Diffb^1(X)$ terms from $P(0)$ as well as more than just the
leading order terms from $R$.

From the Lagrangian perspective we consider a conjugated version of $P(\sigma)$. Thus,
let
$$
\hat P(\sigma)=e^{-i\sigma/x}P(\sigma)e^{i\sigma/x}.
$$
Since conjugation by $e^{i\sigma/x}$ is well-behaved in the
scattering, but not in the b-sense, it is actually advantageous to
first perform the conjugation in the scattering setting, and then
convert the result to a b-form. The principal symbol of $\hat P$ in
the scattering decay sense is
simply a translated version, by $d(\sigma/x)$, of that of $P$ (this
corresponds to $e^{-i\sigma/x}(x^2D_x)e^{i\sigma/x}=x^2D_x-\sigma$), which
is $\tau^2+\mu^2-\sigma^2$, thus it is
$\tau^2+\mu^2-2\sigma\tau$. However, we need more precise information,
thus we perform the computation explicitly.

\begin{prop}\label{prop:full-hat-P-sigma}
We have
\begin{equation}\label{eq:full-hat-P-sigma-separated}
\hat P(\sigma)=\hat P(0)+\sigma\hat Q+\sigma^2\hat R-2\sigma \Big(x^2D_x+i\frac{n-1}{2}x+\frac{\hat\beta-\hat\gamma}{2}x\Big)
\end{equation}
with $\hat\beta,\hat\gamma\in \CI(X)+S^{-\delta}(X)$, $\im\hat\beta|_{\pa
  X}=\beta_I$, $\im\hat\gamma|_{\pa X}=\gamma_I$,
\begin{equation*}\begin{aligned}
\hat P(0)&=P(0)\in x^2\Diffb^2(X)+S^{-2-\delta}\Diffb^2(X),\\
\hat Q&\in x^2\Diffb^1(X)+S^{-2-\delta}\Diffb^1(X),\\
\hat R&\in x\CI(X)+S^{-1-\delta}(X);
\end{aligned}\end{equation*}
and $\im \hat Q\in S^{-1-\delta}(X)+S^{-2-\delta}\Diffb^1(X)$, $\im\hat R\in S^{-1-\delta}(X)$.
\end{prop}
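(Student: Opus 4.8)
The plan is to compute $\hat P(\sigma)=e^{-i\sigma/x}P(\sigma)e^{i\sigma/x}$ by hand, carrying out the conjugation first in the scattering calculus (where $e^{i\sigma/x}$-conjugation is well behaved) and only afterwards rewriting the result in b-form. Everything rests on two facts: conjugation by $e^{i\sigma/x}$ translates the scattering symbol in the $\tau$-variable, concretely $e^{-i\sigma/x}(x^2D_x)e^{i\sigma/x}=x^2D_x-\sigma$, while $e^{i\sigma/x}$ commutes with all multiplication operators and with each $xD_{y_j}$. Writing $P(0)$ and $Q$ in scattering-differential form $\sum a_{j\alpha}(x^2D_x)^j(xD_y)^\alpha$ (degrees $\le2$ and $\le1$, coefficients in $\CI(X)+S^{-1-\delta}(X)$ after the leading parts are split off) and substituting $x^2D_x\mapsto x^2D_x-\sigma$, one sees that $e^{-i\sigma/x}P(0)e^{i\sigma/x}$ is a polynomial in $\sigma$ of degree $\le2$, that $e^{-i\sigma/x}Qe^{i\sigma/x}=Q-\sigma b_Q$ with $b_Q\in x\CI(X)+S^{-1-\delta}(X)$ the $x^2D_x$-coefficient of $Q$, and that $e^{-i\sigma/x}(1-R)e^{i\sigma/x}=1-R$ since $1-R$ is a multiplication operator. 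The claim $\hat P(0)=P(0)\in x^2\Diffb^2(X)+S^{-2-\delta}\Diffb^2(X)$ is then immediate from $P(0)=\Delta_g+(P(0)-\Delta_g)$, using that $\Delta_g\in x^2\Diffb^2(X)$ and \eqref{eq:PQR-b-def}.

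Next I collect powers of $\sigma$ in $\hat P(\sigma)=e^{-i\sigma/x}P(0)e^{i\sigma/x}+\sigma\,e^{-i\sigma/x}Qe^{i\sigma/x}-\sigma^2(1-R)$ and rewrite each coefficient in b-form, using $\Diffsc^1(X)=\CI(X)+x\Diffb^1(X)$ (so $x\Diffsc^1(X)=x\CI(X)+x^2\Diffb^1(X)$) and $[x^2D_x,x^{-1}]=i$, $[xD_x,x^{-1}]=i/x$. The $\sigma^2$-coefficient of $e^{-i\sigma/x}P(0)e^{i\sigma/x}$ is exactly the coefficient of $(x^2D_x)^2$ in $P(0)$, namely $1$ plus a term in $x\CI(X)+S^{-1-\delta}(X)$ (this is the top scattering symbol coefficient of $\Delta_g$, normalized to $1$ at $\pa X$ by $g-g_0\in x\CI+S^{-1-\delta}$); the $\sigma^2$-contribution from $Q$ is $-b_Q$; and $-\sigma^2(1-R)$ contributes $-1+R$. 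The two $\pm1$'s cancel --- this is precisely where the $-\sigma^2$ built into $P(\sigma)$ meets the $+\sigma^2$ produced by $(x^2D_x-\sigma)^2$ --- so the surviving sum, $\hat R$, lies in $x\CI(X)+S^{-1-\delta}(X)$. The $\sigma^1$-coefficient is $i[P(0),x^{-1}]+Q$; its $\Delta_{g_0}$-part $i[\Delta_{g_0},x^{-1}]$ computes to exactly $-2(x^2D_x+i\tfrac{n-1}{2}x)$, while the remaining first-order-conjugation terms of $P(0)$ (coming from $\Delta_g-\Delta_{g_0}\in(x\CI+S^{-1-\delta})\Diffsc^2$ and from $P(0)-\Delta_g\in x^2\Diffb^1+S^{-2-\delta}\Diffb^1$), together with $Q$ itself, split as a piece in $x^2\Diffb^1(X)+S^{-2-\delta}\Diffb^1(X)$ plus a multiplication operator in $x\bigl(\CI(X)+S^{-\delta}(X)\bigr)$. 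Declaring $-(\hat\beta-\hat\gamma)x$ to be that multiplication operator --- split into the part coming from $P(0)$ (call it $\hat\beta$) and the part coming from $Q$ ($\hat\gamma$) --- the remaining operator $\hat Q:=i[P(0),x^{-1}]+Q+2(x^2D_x+i\tfrac{n-1}{2}x)+(\hat\beta-\hat\gamma)x$ lands in $x^2\Diffb^1(X)+S^{-2-\delta}\Diffb^1(X)$, with $\hat\beta,\hat\gamma\in\CI(X)+S^{-\delta}(X)$. This gives \eqref{eq:full-hat-P-sigma-separated}.

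For the imaginary parts, for real $\sigma$ one has $\hat P(\sigma)^*=e^{-i\sigma/x}P(\sigma)^*e^{i\sigma/x}$ since $e^{i\sigma/x}$ is then unitary; running the same computation on $P(\sigma)^*=P(0)^*+\sigma Q^*-\sigma^2(1-R^*)$, subtracting, dividing by $2i$, and using the identity $\im\bigl(i[A,x^{-1}]\bigr)=i[\im A,x^{-1}]$, I get
$$\im\hat Q=i[\im P(0),x^{-1}]+\im Q+2\,\im\bigl(x^2D_x+i\tfrac{n-1}{2}x\bigr)+\bigl(\im(\hat\beta-\hat\gamma)\bigr)x,$$
and, analogously, $\im\hat R$ equals $\im R$ up to terms already in $S^{-1-\delta}(X)$. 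Substituting \eqref{eq:Im-P}, \eqref{eq:Im-Q}, \eqref{eq:Im-R} and using the explicit computation $i\bigl[\beta_I x(x^2D_x+ix\tfrac{n-2}{2}),x^{-1}\bigr]=-\beta_I x$, this gives $i[\im P(0),x^{-1}]=-\beta_I x$ modulo $S^{-1-\delta}(X)$, $\im Q=\gamma_I x$ modulo $S^{-1-\delta}(X)+S^{-2-\delta}\Diffb^1(X)$, $\im R\in S^{-1-\delta}(X)$, and $\im\bigl(x^2D_x+i\tfrac{n-1}{2}x\bigr)\in x^2\CI(X)$ (the density correction; this is why $\tfrac{n-1}{2}$ is the correct constant). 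Since $\hat\beta$ is by construction minus the undecaying part of $i[P(0),x^{-1}]+i(n-1)x$ and $\hat\gamma$ the undecaying part of $Q$, taking imaginary parts of these gives $\im\hat\beta|_{\pa X}=\beta_I$ and $\im\hat\gamma|_{\pa X}=\gamma_I$; feeding this back into the displayed identity yields $\im\hat Q\in S^{-1-\delta}(X)+S^{-2-\delta}\Diffb^1(X)$, and similarly $\im\hat R\in S^{-1-\delta}(X)$.

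The computation itself is routine; the real work --- and the step I expect to be the main obstacle --- is the bookkeeping: keeping track of the b-decay orders of the many error terms produced when the scattering conjugate is converted to b-form, and verifying that the genuinely undecaying ($x\CI(X)$) part of the $\sigma^1$-coefficient is exactly what is absorbed into the distinguished summand $-2\sigma\,\tfrac{\hat\beta-\hat\gamma}{2}x$, so that $\hat Q$ really does gain the advertised extra order of vanishing. (One may assume $\delta\le1$ throughout --- replacing $\delta$ by $\min(\delta,1)$ only weakens hypotheses and conclusions together --- so that the stray $x^2\CI(X)$ terms coming from metric-density factors fall into $S^{-1-\delta}(X)$.)
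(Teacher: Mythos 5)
Your proposal is correct and takes essentially the same route as the paper: conjugation acts by $x^2D_x\mapsto x^2D_x-\sigma$, one expands $P(0)$, $Q$, $R$ in the scattering frame, collects powers of $\sigma$ (with the $\pm\sigma^2$ cancellation producing $\hat R$), and absorbs the undecaying $x\,\CI$ multiplication parts into $\hat\beta,\hat\gamma$. The paper merely organizes the same computation with fully explicit coefficients $a_{00},a_{0j},a_0,b_0,b_j,b',\beta,\gamma$ instead of your $i[P(0),x^{-1}]$ bookkeeping, and reads the statements on $\im\hat Q$, $\im\hat R$, $\im\hat\beta|_{\pa X}$, $\im\hat\gamma|_{\pa X}$ off those coefficients together with \eqref{eq:Im-P}--\eqref{eq:Im-R}, which is the same cancellation you verify.
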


\begin{rem}
Notice that $\hat P(\sigma)\in x\Diffb^2(X)$ (modulo the faster
decaying $S^{-2-\delta}\Diffb^2(X)$), unlike $P(\sigma)$ which
is merely in $\Diffb^2(X)$ (modulo the faster
decaying $S^{-1-\delta}\Diffb^2(X)$) due to the $\sigma^2$ term; this one order
decay improvement plays a key role below.

Also, in \cite[Equation~(3.5)]{Vasy:Limiting-absorption-lag} one has
$\hat P(0)=P(0)-xa'$ with the notation there (so $xa'$ there is
$x^2a'$ here, see the notation in the proof below); under our present assumptions of it being $O(x^2)$, we
do not need to remove the $a'$ term from $P(0)$.
\end{rem}

\begin{proof}
In general we have
\begin{equation*}\begin{aligned}
P(0)=&(1+xa_{00})(x^2D_x)^2+\sum_j xa_{0j} ((x^2D_x)
(xD_{y_j})+(xD_{y_j}) (x^2D_x))\\
&+\sum_{i,j} a_{ij}
(xD_{y_i})(xD_{y_j})\\
&+(i(n-1)+\beta+ a_0)x(x^2D_x)+\sum_j xa_j (xD_{y_j})+x^2 a',
\end{aligned}\end{equation*}
and
\begin{equation*}\begin{aligned}
Q=b_0 x (x^2D_x)+\sum_j xb_j (xD_{y_j})+\gamma x+b'x
\end{aligned}\end{equation*}
with $a_{00},a_{0j},a_j,a',b_0,b_j\in \CI(X)+S^{-\delta}(X)$,
$a_0,b'\in S^{-\delta}(X)$, $a_{ij}\in
\CI(X)+S^{-1-\delta}(X)$,  $\im a_j,\im b_0,\im b_j\in
S^{-\delta}(X)$, $a'-\beta'\in S^{-\delta}(X)$, $\im\beta'=\beta'_I+\frac{n-2}{2}$, $\beta,\gamma\in\CI(X)$ (which can be
considered as functions on $\pa X$ due to the $a_0$ and $b'$ terms,
they are singled out rather than included in $a_0,b'$ due to their
role below) with $\im\beta=\beta_I$, $\im\gamma=\gamma_I$, and with
$b_0,b_j,b'$ smoothly depending on $\sigma$. Let
$$
\hat\beta=\beta+a_0,\ \hat\gamma=\gamma+b'.
$$
As $e^{-i\sigma/x}(x^2D_x)e^{i\sigma/x}=x^2D_x-\sigma$, this gives
\begin{equation*}\begin{aligned}
&e^{-i\sigma/x}P(0)e^{i\sigma/x}\\
=&(1+xa_{00})(x^2D_x-\sigma)^2+\sum_j xa_{0j} ((x^2D_x-\sigma)
(xD_{y_j})+(xD_{y_j}) (x^2D_x-\sigma))\\
&\qquad+\sum_{i,j} a_{ij}
(xD_{y_i})(xD_{y_j})+(i(n-1)+\beta+a_0)x(x^2D_x-\sigma)\\
&\qquad+\sum_j xa_j (xD_{y_j})+x^2 a',
\end{aligned}\end{equation*}
and
$$
e^{-i\sigma/x}Qe^{i\sigma/x}=b_0 x (x^2D_x-\sigma)+\sum_j x b_j
(xD_{y_j})+\gamma x+b'x.
$$
Combining the terms, including $R$, gives
\begin{equation}\label{eq:full-hat-P-sigma-separated-b}
\hat P(\sigma)=\hat P(0)+\sigma\hat Q+\sigma^2\hat R-2\sigma \Big(x^2D_x+i\frac{n-1}{2}x+\frac{\hat\beta-\hat\gamma}{2}x\Big)
\end{equation}
with
\begin{equation*}\begin{aligned}
\hat P(0)&=P(0)\in x^2\Diffb^2(X)+S^{-2-\delta}\Diffb^2(X),\\
\hat Q&=Q-2xa_{00}(x^2D_x)-2\sum_j xa_{0j}(xD_{y_j})-x\hat\gamma\\
&\qquad\qquad\qquad\qquad\in x^2\Diffb^1(X)+S^{-2-\delta}\Diffb^1(X),\\
\hat R&=R+xa_{00}-xb_0\in x\CI(X)+S^{-1-\delta}(X);
\end{aligned}\end{equation*}
note that $\im\hat R,\im\hat Q$ are also as stated.
\end{proof}

We also remark that the principal symbol of $\hat P(0)$ vanishes
 quadratically at the scattering zero section, $\tau=0$, $\mu=0$,
 $x=0$, hence the subprincipal symbol makes sense directly there (without
 taking into account contributions from the principal symbol, working
 with half-densities, etc.),
 and this in turn vanishes. It is convenient to summarize this,
 including positivity properties of $\hat P(0)$ here, as this 
will be helpful when considering non-real $\sigma$ below. Note that
this result already appears in \cite{Vasy:Limiting-absorption-lag};
the stronger assumptions in our case do not affect the statement.

\begin{lemma}[cf.\ Lemma~3.2 of \cite{Vasy:Limiting-absorption-lag}]
The operator $\hat P(0)$ is non-negative modulo terms that are either
sub-sub-principal or subprincipal but with vanishing contribution at
the scattering zero section, in the sense that it
has the form
\begin{equation}\label{eq:hat-P-0-nonnegative}
\hat P(0)=\sum_j T_j^*T_j+\sum_j T_j^* T'_j+\sum_j T^\dagger_j T_j+T''
\end{equation}
where $T_j\in x\Diffb^1(X)+S^{-2-\delta}\Diffb^1(X)$, $T'_j,T^\dagger_j\in
x\CI(X)+S^{-1-\delta}(X)$, $T''\in x^2\CI(X)+S^{-2-\delta}(X)$.
Moreover,
\begin{equation}\label{eq:hat-Q-module-form}
\hat Q=\sum_j T_j^* \tilde T'_j+\sum_j \tilde T^\dagger_j T_j+\tilde T''
\end{equation}
with $\tilde T'_j,\tilde T_j^\dagger\in
x\CI(X)+S^{-1-\delta}(X)$, $\tilde T''\in x^2\CI(X)+S^{-2-\delta}(X)$.
\end{lemma}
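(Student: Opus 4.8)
The plan is to realize $\hat P(0)=P(0)$ as the non-negative operator $\Delta_g$ — for which a sum-of-squares decomposition is automatic — plus a remainder that \eqref{eq:PQR-b-def} forces to have one extra order of $b$-decay, hence to be absorbable into the subprincipal terms of \eqref{eq:hat-P-0-nonnegative}. One should compare with $\Delta_g$ rather than with $\Delta_{g_0}$, precisely because $P(0)-\Delta_g\in x^2\Diffb^1(X)+S^{-2-\delta}\Diffb^1(X)$ is first order, while $P(0)-\Delta_{g_0}$ has a second order part. First, near $\pa X$ fix an orthonormal frame $e_1,\dots,e_n$ of ${}^{\scl}TX$ for $g$; since $g-g_0\in x\CI+S^{-1-\delta}$, such a frame differs from the model one (built from $x^2D_x$ and the $xD_{y_j}$) by a matrix that is $I$ plus a section of $x\CI+S^{-1-\delta}$, so each $e_j$ is a scattering vector field up to an $S^{-1-\delta}\Vsc(X)$ error and thus lies in $x\Diffb^1(X)+S^{-2-\delta}\Diffb^1(X)$. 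Complete $\{e_j\}$, using finitely many cut-off interior vector fields, to a finite family $\{T_j\}$ generating $\Vsc(X)$ over $\CI(X)$. Since the scattering divergence of a scattering vector field lies in $x\CI(X)+S^{-1-\delta}(X)$, the formal adjoint $T_j^*$ (with respect to the scattering density) is $-T_j$ plus multiplication by such a function, hence again lies in $x\Diffb^1(X)+S^{-2-\delta}\Diffb^1(X)$, as required.

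Second, using the identity $\Delta_g=\sum_j e_j^*e_j$ — exact for an orthonormal frame, with no zeroth order term since $\Delta_g=d^*d$ on functions — together with the fact that localization commutators $[\chi,e_k]$ lie in $x\CI$, one gets $\sum_j T_j^*T_j-\Delta_g\in x\Vsc(X)+S^{-1-\delta}\Vsc(X)\subset x^2\Diffb^1(X)+S^{-2-\delta}\Diffb^1(X)$. Combining with \eqref{eq:PQR-b-def},
\[
R:=P(0)-\sum_j T_j^*T_j\in x^2\Diffb^1(X)+S^{-2-\delta}\Diffb^1(X).
\]
Split $R=R_1+R_0$ into its first and zeroth order parts: $R_0\in x^2\CI(X)+S^{-2-\delta}(X)$, so take $T'':=R_0$; and $R_1$ has vanishing constant term and lies in $x^2\Vb(X)+S^{-2-\delta}\Vb(X)=x\Vsc(X)+S^{-1-\delta}\Vsc(X)$, so expanding it in the generating family $R_1=\sum_j c_j T_j$ with $c_j\in x\CI(X)+S^{-1-\delta}(X)$, a $\sum_j T^\dagger_j T_j$ contribution. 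If a manifestly adjoint-symmetric decomposition is wanted, split $R_1$ into a left- and a right-acting half and distribute these between $\sum_j T_j^*T'_j$ and $\sum_j T^\dagger_j T_j$. This gives \eqref{eq:hat-P-0-nonnegative}; the imaginary-part hypotheses \eqref{eq:Im-P}--\eqref{eq:Im-R} are not needed for the existence of the decomposition (they enter later, in the analysis for non-real $\sigma$), since the only memberships demanded of $T'_j,T^\dagger_j,T''$ are insensitive to their imaginary parts.

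For \eqref{eq:hat-Q-module-form}, Proposition~\ref{prop:full-hat-P-sigma} already places $\hat Q$ in $x^2\Diffb^1(X)+S^{-2-\delta}\Diffb^1(X)$ — one order of $b$-decay below $\hat P(0)$ — which is exactly why no $\sum_j T_j^*T_j$ term appears (such a term would be genuinely second order). The decomposition of $R$ applies to $\hat Q$ verbatim: its zeroth order part is the $\tilde T''$ term, and its first order part, in $x\Vsc(X)+S^{-1-\delta}\Vsc(X)$, expands in $\{T_j\}$ with coefficients $\tilde T^\dagger_j\in x\CI(X)+S^{-1-\delta}(X)$ (split symmetrically into $\sum_j T_j^*\tilde T'_j$ and $\sum_j\tilde T^\dagger_j T_j$ if desired). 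The $\sigma$-dependence, inherited from the coefficients of $Q$, is smooth and the memberships are uniform for $|\sigma|$ bounded.

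The only non-mechanical point is the first step: producing a single finite family $\{T_j\}$ that is simultaneously a frame near $\pa X$ and a generating set in the interior, with $\sum_j T_j^*T_j$ equal to $\Delta_g$ up to the acceptable error — together with the order bookkeeping confirming that every localization and commutator term, the discrepancy $P(0)-\Delta_g$, and the possibly non-self-adjoint lower-order part all land in $x^2\Diffb^1(X)+S^{-2-\delta}\Diffb^1(X)$ and split into the four listed types. This is routine in the b- and scattering calculi and is essentially \cite[Lemma~3.2]{Vasy:Limiting-absorption-lag}, whose proof the stronger present hypotheses do not disturb.
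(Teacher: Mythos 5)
Your construction is correct and is essentially the intended argument: the paper itself gives no proof of this lemma, deferring to Lemma~3.2 of \cite{Vasy:Limiting-absorption-lag} with the remark that the stronger hypotheses here change nothing, and your decomposition --- $\hat P(0)=\Delta_g$ written as a sum of squares of (cut-off) $g$-orthonormal scattering vector fields, plus a remainder in $x^2\Diffb^1(X)+S^{-2-\delta}\Diffb^1(X)$ whose first-order part is expanded in the generating family $\{T_j\}$ with coefficients in $x\CI(X)+S^{-1-\delta}(X)$ and whose zeroth-order part is $T''$, with the same module argument applied verbatim to $\hat Q$ --- is exactly that proof adapted to the present decay orders. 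One small point to tighten: a single orthonormal frame of $\Tsc X$ near $\pa X$ need not exist globally, but taking finitely many local frames $e_{a,j}$ cut off by a partition of unity with $\sum_a\chi_a^2=1$ gives $\Delta_g=\sum_{a,j}(\chi_a e_{a,j})^*(\chi_a e_{a,j})$ exactly (by the quadratic-form identity $\|du\|_g^2=\sum_{a,j}\|\chi_a e_{a,j}u\|^2$), so your finite family, commutator remarks and order bookkeeping go through unchanged.
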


The standard normal operator
of $\hat P(\sigma)$, which arises by considering the operator
$x^{-1}\hat P(\sigma)$ and freezing the coefficients at
the boundary,
\begin{equation}\begin{aligned}\label{eq:actual-normal-op-tilde}
N(\hat P(\sigma))&=-2\sigma \Big(x^2D_x+i\frac{n-1}{2}x+\frac{\beta-\gamma}{2}x\Big)+\sigma^2
x \varpi\\
&=-2\sigma \Big(x^2D_x+i\frac{n-1}{2}x +\frac{\beta-\gamma}{2}x-\sigma\frac{\varpi}{2}x\Big),\\
&\qquad \varpi=(x^{-1}R)|_{\pa X}+a_{00}|_{\pa X}-b_0|_{\pa X},\
\beta=\hat\beta|_{\pa X},\ \gamma=\hat\gamma|_{\pa X},
\end{aligned}\end{equation}
degenerates at $\sigma=0$,
corresponding to $\hat P(\sigma)$ being in $x^2\Diffb^2(X)+\sigma
x\Diffb^1(X)$ (modulo faster decaying terms with symbolic coefficients), so the
definiteness of the operator at $\sigma=0$  still arises from $x^2\Diffb^2(X)$.
Hence,
we need to use an
effective normal operator even with
this approach, which thus again will not be dilation
invariant. However, we shall use a joint scaling in $(x,\sigma)$, in
which sense it is well behaved.

Before proceeding, we remark that the actual normal operator,
\eqref{eq:actual-normal-op-tilde}, is $x$ times the normal vector
field to the boundary plus a scalar, which, for $\sigma\neq 0$, corresponds to the
asymptotic behavior of the solutions of $\hat P(\sigma)v\in\dCI(X)$
being
$$
x^{(n-1-i (\beta-\gamma)+i\sigma\varpi)/2}\CI(\pa X),
$$
modulo faster decaying terms. This corresponds to the asymptotics
$$
e^{i\sigma/x}x^{(n-1-i(\beta-\gamma)+i\sigma\varpi)/2}\CI(\pa X)
$$
for solutions of $P(\sigma)u\in\dCI(X)$ for $\sigma\neq 0$. This
indicates that we can remove the contribution of $\varpi,\beta,\gamma$ to
leading decay order by conjugating the operator by
$x^{(-i(\beta-\gamma)+i\sigma\varpi)/2}$, but we do not do this
here. We also remark that $\varpi$ is real by our assumptions. Notice that in the case of Kerr spacetimes, if we factor out the
coefficient of $\pa_t^2$ from the operator, $a_{00}|_{\pa X}=-4m$ and
$(x^{-1}R)|_{\pa X}=0$, $b_0|_{\pa
  X}=0$,
so our
conjugating factor is
$e^{i\sigma (r+2m\log r)}$, which is asymptotically exactly
$e^{i\sigma r_*}$, $r_*$ the logarithmically modified radial function
(a version of the Regge-Wheeler radial tortoise coordinate function). If we do not factor this coefficient out, then
$a_{00}|_{\pa X}=-2m$, $\hat R-(-2mx)\in x^2\CI(X)$, and $b_0|_{\pa
  X}=0$, so we obtain the same conclusion.

For us, the key normal operator is the one
associated to the front face of the blow up of $x=\sigma=0$. In the
present case it captures $\hat P(\sigma)$ modulo
$$
x(x+\sigma)(x^\delta+\sigma)S^0\Diffb^2(X)
\subset\Psibr^{2,-1,-2-\delta,0}(X),
$$
with the latter class of pseudodifferential operators introduced in
the next section.
Note that this operator does not quite encapsulate the standard normal operator
since in the second, b-decay, order there is no gain over the a priori given
membership of $\hat P(\sigma)$, but we can use the smallness of
$\sigma$ (we are interested in the zero energy limit after all) to
deal with this, see the proof of Proposition~\ref{prop:b-res-imp-est}.

\begin{Def}
We define the {\em effective normal operator} $N_0(\hat P(\sigma))$ as
$\hat P(\sigma)$ modulo $x(x+\sigma)(x^\delta+\sigma)S^0\Diffb^2(X)$.
\end{Def}

The nice feature is that starting from sc-differential operators,
which we consider as b-operators with decaying coefficients, and
conjugating them by exponentials, we lose decay but gain a factor of
$\sigma$, so if it were not for the overall $x$ vanishing in the space
to be quotiented out, $x(x+\sigma)(x^\delta+\sigma)S^0\Diffb^2(X)$,
any term in $S^{-2-\delta}\Diffb^2(X)$ in the unconjugated operator
$P(\sigma)$ would automatically
give a trivial contribution to the effective normal operator. As is,
in $P(0)$ more structure is needed, which is the reason for giving the
requirements in \eqref{eq:PQR-b-def} in the stated form.

We then have
$$
N_0(\hat P(\sigma))=\Delta_{g_0}+\beta x^2\Big(xD_x+i\frac{n-2}{2}\Big)+x^2\beta'-2\sigma \Big(x^2D_x+i\frac{n-1}{2}x +\frac{\beta-\gamma}{2}x\Big)
$$
in the sense that
$$
N_0(\hat P(\sigma))-\hat P(\sigma)\in x(x+\sigma)(x^\delta+\sigma)\Diffb^2(X)\subset\Psibr^{2,-1,-2-\delta,0}(X).
$$
More precisely, we identify $X$ near $\pa X$ with $[0,x_0)_x\times \pa
X$, as usual for the standard normal operator, regard $N_0(\hat
P(\sigma))$ as an operator on the cone over $\pa X$,
$[0,\infty)_x\times\pa X$, and the requirement is that {\em evaluated on} $\sigma$-dependent families supported in $x<x_0$, with output
restricted to the same region, the difference of $N_0(\hat P(\sigma))$
and $\hat P(\sigma)$ has the desired form, i.e.\ is given by an
operator family with the indicated properties. A key point is that
$N_0(\hat P(\sigma))$ is dilation invariant jointly in $(x,\sigma)$,
which we shall use in Section~\ref{sec:normal}.

In addition to the extended normal operator, we also need to consider
the {\em standard} normal operator $N(\hat P(0))$ of $\hat P(0)$
as an operator in
$$
x^2\Diffb^2(X)+S^{-2-\delta}\Diffb^2(X),
$$
thus quotienting out by $S^{-2-\delta}\Diffb^2(X)$. Correspondingly,
we keep more information (for this term) than for $N(\hat
P(\sigma))$, since there the quotient is by $S^{-1-\delta}\Diffb^2(X)$
but on the other hand this is simply the extended normal operator
$N_0(\hat P(0))$ of $\hat P(0)$, namely it is
$$
\Delta_{g_0}+\beta x^2\Big(xD_x+i\frac{n-2}{2}\Big)+x^2\beta'\in x^2\Diffb^2(X),
$$
modulo $S^{-2-\delta}\Diffb^2(X)$.

For normal operator purposes it is convenient to work with $L^2_{\bl}$ instead of the metric
$L^2$-space, $L^2_{g_0}$; this is given by the density
$\frac{dx}{x}\,dh_0$, $h_0=h|_{\pa X}$ the metric on the cross
section of the asymptotic cone, so
$$
L^2_\bl=x^{-n/2}L^2_{g_0}.
$$ 
Let
$$
\Delta_{\bl}=x^{-(n+2)/2}\Delta_{g_0}x^{(n-2)/2}\in\Diffb^2(X);
$$
as $x^{-1}\Delta_{g_0} x^{-1}$ is symmetric with respect to the
$L^2_{g_0}$-inner product, $\Delta_{\bl}$ is symmetric with respect to the
$L^2_\bl$ inner product. Explicitly, with $\Delta_{\pa X}=\Delta_{h_0}$,
\begin{equation}\begin{aligned}\label{eq:b-Lap}
\Delta_{\bl}&=x^{n/2}D_x x^{-n+3}D_xx^{n/2-1}+\Delta_{\pa X}\\
&=\Big(D_x x+i\frac{n}{2}\Big) x^{-n/2+2}D_xx^{n/2-1}+\Delta_{\pa X}\\
&=\Big(D_x x+i\frac{n}{2}\Big) \Big(xD_x-i\frac{n-2}{2}\Big)+\Delta_{\pa X}\\
&=(xD_x)^2 +\Delta_{\pa X}+\Big(\frac{n-2}{2}\Big)^2;
\end{aligned}\end{equation}
notice that this is a positive definite operator on $L^2_\bl$ for
$n\geq 3$, since on the
Mellin transform side it is multiplication by a positive (operator valued) function.
The full conjugated and re-normalized operator (in that $x^2$ is
factored out) is
$$
x^{-(n+2)/2}\Big(\Delta_{g_0}+\beta x^2\Big(xD_x+i\frac{n-2}{2}\Big)+x^2\beta'\Big)
x^{(n-2)/2}=\Delta_{\bl}+\beta (xD_x)+\beta'.
$$
Mellin transforming in $x$ we obtain the elliptic family
$$
\taub^2+\beta\taub+\beta'+\Delta_{\pa X}+\Big(\frac{n-2}{2}\Big)^2
$$
of operators on $\pa X$, which is also elliptic in the large parameter
sense (in $\taub$, with $\im\taub$ bounded), invertible for large
$|\taub|$ with $\im\taub$ bounded, so the inverse is a meromorphic
family. Its poles are called the {\em indicial roots}. If $\beta,\beta'$ are constant scalars, this is invertible whenever
$$
-\Big(\frac{n-2}{2}\Big)^2-\taub^2-\beta\taub-\beta'
$$
is not an eigenvalue of $\Delta_{\pa X}$; if $\beta,\beta'$ are
constant non-scalar and have a
joint eigenspace decomposition ($\beta$ is assumed to be skew-adjoint
below!), then one can effectively replace them by the eigenvalues. Thus, the indicial roots are
of the form
$$
\frac{1}{2}\Big(-\beta\pm\sqrt{\beta^2-4\Big(\lambda+\Big(\frac{n-2}{2}\Big)^2+\beta'\Big)}\,\Big),
$$
with $\lambda$ an eigenvalue of $\Delta_{\pa X}$, which for $\beta=0$, $\beta'=0$ reduces to
$$
\pm i\sqrt{\lambda+\Big(\frac{n-2}{2}\Big)^2}.
$$
This means that for $\beta=0$, $\beta'=0$ one
has a `central interval' $(-\frac{n-2}{2}, \frac{n-2}{2})$ such that
if $\im\taub$ is in the interval, then the Mellin transformed normal
operator is invertible. This corresponds to invertibility of the
original operator on weighted $L^2_\bl$ spaces $x^{\ell}L^2_\bl$, where
$|\ell|<\frac{n-2}{2}$. This means that the unconjugated operator
$N(\hat P(0))$ is invertible from weighted spaces
$$
x^{\ell+(n-2)/2}L^2_\bl=x^{\ell-1}L^2_{g_0}
$$
to spaces with two additional orders of decay and two b-derivatives,
namely $x^{l'+1}\Hb^2$ (recall that we are using the $g_0$-density for
these spaces), which means the domain space has weight
\begin{equation}\label{eq:central-sc-interval}
l'=\ell-1\in \Big(-1-\frac{n-2}{2}, -1+\frac{n-2}{2}\Big).
\end{equation}
In general, for the simplicity of discussion, and as this covers
already the most interesting case, we assume that $\beta$ is
skew-adjoint and $\beta'$ is sufficiently small. If $\re\beta'>
\frac{\beta^2}{4}-\Big(\frac{n-2}{2}\Big)^2$ (which includes
$\beta=0$, $\beta'=0$), then
\eqref{eq:central-sc-interval} is replaced by
\begin{equation}\begin{aligned}\label{eq:central-sc-interval-1}
l'\in
\Big(-1+&\frac{\im\beta}{2}-\re\sqrt{-\frac{\beta^2}{4}+\Big(\frac{n-2}{2}\Big)^2+\beta'},
\\
&-1+\frac{\im\beta}{2}+\re\sqrt{-\frac{\beta^2}{4}+\Big(\frac{n-2}{2}\Big)^2+\beta'}\,\Big),
\end{aligned}\end{equation}
where the right hand side contains the sub-interval where $\beta'$ is
replaced by $\re\beta'$, and then the real part in front of the square
roots can be dropped. We call the interval on the right hand side of
\eqref{eq:central-sc-interval-1} the {\em
central interval for weights for the
scattering end}, and we denote it by $(\nu_-,\nu_+)$. We refer to
Remark~\ref{rem:limiting-absorption} for an explanation of the role of
this particular weight interval free from the negatives of the
imaginary parts of the indicial roots.

In Section~\ref{sec:normal} we rescale
$N_0(\hat P(\sigma))$ using the dilation invariance. This amounts to
introducing $X=x/|\sigma|$, $\hat\sigma=\sigma/|\sigma|$, in terms of which
\begin{equation*}\begin{aligned}
N_0(\hat P(\sigma))=\sigma^2 \Big(\Delta_{g_0}+&\beta
X^2\Big(XD_X+i\frac{n-2}{2}\Big)+\beta'X^2\\
&-2 \hat\sigma\Big(X^2D_X+i\frac{n-1}{2}X+\frac{\beta-\gamma}{2}X\Big)\Big)
\end{aligned}\end{equation*}
and $\Delta_0$ is the Laplacian of the exact conic metric
$\frac{dX^2}{X^4}+\frac{h_0}{X^2}$. Since
$\sigma^{-2}N_0(\hat P(\sigma))$ is homogeneous with respect to $X$
dilations, it can be globally Mellin transformed, and one can consider
this as simultaneously resolving the scattering end, $X=0$, as well as
the conic point, $X^{-1}=0$. The normalization of the Mellin transform
$$
(\cM u)(\taub,y)=\int_0^\infty X^{-i\taub} u(X,y)\,\frac{dX}{X}
$$
corresponding to the choice of $X$, as opposed to the choice of
$X^{-1}$,
$$
(\cM^c u)(\taub^c,y)=\int_0^\infty X^{i\taub^c} u(X,y)\,\frac{dX}{X},
$$
with superscript $c$ standing for the conic point,
involves taking the negative of the dual variable (as the
transforms use powers of $X$ vs.\ $X^{-1}$, so $\taub^c=-\taub$
identifies the transforms) hence the indicial roots
for the scattering end are the negatives of the indicial roots at the
conic point. When $\beta=0$ and $\beta'=0$, the central interval at the conic point is
$(1-\frac{n-2}{2},1+\frac{n-2}{2})$ as is familiar from analysis of
the Laplacian on spaces with conic singularities, and this interval indeed is the
negative of the range \eqref{eq:central-sc-interval}.

The version of the theorem corresponding to our more general
operators is:

\begin{thm}\label{thm:main-gen}
Suppose that
$\beta$ is skew-symmetric and $\re\beta'>
\frac{\beta^2}{4}-\Big(\frac{n-2}{2}\Big)^2$.
Suppose that $(\nu_-,\nu_+)$ is the central interval for weights at
the scattering end, $l'\in(\nu_-,\nu_+)$, see
\eqref{eq:central-sc-interval-1}. Suppose that
$P(0):\Hb^{\infty,l'}\to\Hb^{\infty,l'+2}$ has trivial nullspace, an
assumption independent of $l'$ in this range. Suppose also that either
$$
r>-1/2+\im(\beta+\gamma)/2,\qquad l<-1/2+\im(\beta-\gamma)/2,
$$
or
$$
r<-1/2+\im(\beta+\gamma)/2,\qquad l>-1/2+\im(\beta-\gamma)/2.
$$
Let
$$
\hat
P(\sigma)=e^{-i\sigma/x}P(\sigma)e^{i\sigma/x}.
$$

There exists
$\sigma_0>0$ such
that
$$
\hat P(\sigma):\{u\in\Hscbr^{s,r,l}:\ \hat P(\sigma)u\in \Hscbr^{s-2,r+1,l+1}\}\to\Hscbr^{s-2,r+1,l+1}
$$
is invertible for $0<|\sigma|\leq\sigma_0$, $\im\sigma\geq 0$, with this inverse being the $\pm i0$ resolvent of
$P(\sigma)$ corresponding to $\pm\re\sigma>0$, and we have the
estimate
\begin{equation*}\begin{aligned}
&\|(x+|\sigma|)^{\alpha}u\|_{\Hscbr^{s,r,l}}\leq C\|(x+|\sigma|)^{\alpha-1}\hat
P(\sigma)u\|_{\Hscbr^{s-2,r+1,l+1}}
\end{aligned}\end{equation*}
for
$$
\alpha\in\Big(l-\nu_+,l-\nu_-\Big).
$$
\end{thm}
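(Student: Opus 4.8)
The plan is to prove the estimate by the standard two-step scheme of microlocal Fredholm analysis — a symbolic a priori estimate modulo a lower order error, followed by the removal of that error via a normal operator estimate — carried out entirely inside the resolved scattering-b calculus $\Psibr$ of Section~\ref{sec:resolved} so that all constants are uniform down to $\sigma=0$, and then to upgrade the resulting Fredholm statement to invertibility using triviality of the nullspace.

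\emph{Symbolic estimates (Section~\ref{sec:symbolic}).} First I would establish, uniformly for $\im\sigma\geq 0$, $0<|\sigma|\leq\sigma_0$, an a priori estimate
\begin{equation*}
\|(x+|\sigma|)^\alpha u\|_{\Hscbr^{s,r,l}}\leq C\|(x+|\sigma|)^{\alpha-1}\hat P(\sigma)u\|_{\Hscbr^{s-2,r+1,l+1}}+C\|(x+|\sigma|)^\alpha u\|_{\Hscbr^{s-1,r-1,l}},
\end{equation*}
whose error gains one order in the sc-differential order $s$ and in the sc-decay order $r$ but \emph{not} in the b-decay order $l$ (the latter being exactly what the normal operator will supply). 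This is assembled from microlocal elliptic estimates — which cover most of phase space, since $\hat P(0)$, hence $\hat P(\sigma)$, is sc-differential elliptic, so ellipticity in $\Psibr$ can fail only on the characteristic set of the effective normal operator, where the nonnegative decomposition \eqref{eq:hat-P-0-nonnegative} of $\hat P(0)$ and the module form \eqref{eq:hat-Q-module-form} of $\hat Q$ are used — together with real principal type propagation along the null-bicharacteristics of the effective normal operator and radial point estimates at the incoming and outgoing radial sets and at the radial set over the scattering zero section. The threshold conditions on $r$ and $l$ assumed in the theorem are precisely those under which these radial point estimates close in the correct direction (propagating out of a source, into a sink); the shifts by $\im(\beta\pm\gamma)/2$ are the subprincipal contributions exhibited in Proposition~\ref{prop:full-hat-P-sigma}, and the two admissible sign patterns correspond to the $+i0$ and $-i0$ boundary values, i.e.\ to $\pm\re\sigma>0$.

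\emph{Normal operator estimate (Section~\ref{sec:normal}).} Next I would remove the error. The key structural fact is that the effective normal operator $N_0(\hat P(\sigma))$ is dilation invariant jointly in $(x,\sigma)$, so after the rescaling $X=x/|\sigma|$, $\hat\sigma=\sigma/|\sigma|$ it becomes, up to the scalar $\sigma^2$, a $\hat\sigma$-dependent model operator on the cone $[0,\infty)_X\times\pa X$. At the scattering end $X=0$ this model is nonzero-energy ($|\hat\sigma|=1$), so the limiting absorption estimates of \cite{Vasy:Limiting-absorption-lag} apply there; at the conic point $X^{-1}=0$ it is governed by the Mellin-transformed indicial family $\taub^2+\beta\taub+\beta'+\Delta_{\pa X}+\Big(\frac{n-2}{2}\Big)^2$, invertible away from the indicial roots; and Mellin transforming in $X$ resolves both ends simultaneously. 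Invertibility of the rescaled model on the weighted space determined by $\alpha$ and $l$ holds exactly when the combined weight $\alpha-l$ at the conic point avoids the indicial roots there, and since the conic-point central interval is the negative $(-\nu_+,-\nu_-)$ of the scattering-end central interval $(\nu_-,\nu_+)$, this is the condition $\alpha-l\in(-\nu_+,-\nu_-)$, i.e.\ $\alpha\in(l-\nu_+,l-\nu_-)$. Feeding this invertibility back into the symbolic estimate and iterating removes the error term. \textbf{This is where I expect the principal difficulty}: the rescaling must be set up so that the two regimes $x\ll|\sigma|$ (covered by \cite{Vasy:Limiting-absorption-lag}) and $x\gg|\sigma|$ (the genuine zero-energy conic model) are patched on the front face of the blow-up of $\{x=\sigma=0\}$ with all constants uniform in $\sigma$, and one must verify that the resolved calculus is closed under the operations that this argument requires.

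\emph{Fredholm theory and identification with the resolvent.} The error-free estimate makes $\hat P(\sigma)$ semi-Fredholm with finite-dimensional kernel and closed range on the stated spaces; the same argument applied to the adjoint problem (with the threshold inequalities reversed) controls the cokernel, so $\hat P(\sigma)$ is Fredholm, and its index is $0$ because one can deform, along a path on which these uniform estimates persist, to a parameter value at which invertibility is already known — a nonzero real $\sigma$ as in \cite{Vasy:Limiting-absorption-lag}, or $\im\sigma$ large, where the conjugating factor $e^{i\sigma/x}$ is genuinely decaying. Triviality of the kernel for $0<|\sigma|\leq\sigma_0$ then follows from the zero-energy hypothesis: a nonzero kernel element along a sequence $\sigma_j\to0$ would, after undoing the conjugation and passing to a limit using the uniform estimate and a compactness argument, produce a nonzero element of the nullspace of $P(0)$ on $\Hb^{\infty,l'}$ — the central-interval condition ensuring that one stays in this space and that the particular value of $l'$ in the range is immaterial — contradicting the hypothesis. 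Hence $\hat P(\sigma)$ is invertible and the estimate holds; and since for $\im\sigma>0$ the operator $\hat P(\sigma)$ is conjugate to $P(\sigma)$ by $e^{i\sigma/x}$ and the resolved spaces are compatible with this conjugation, the uniformity of the estimates up to the real axis identifies $\hat P(\sigma)^{-1}$ with the $\pm i0$ boundary value of the resolvent of $P(\sigma)$ according to the sign of $\re\sigma$.
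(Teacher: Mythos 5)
Your overall strategy does coincide with the paper's: symbolic estimates in the resolved scattering-b calculus whose error gains sc-orders but not b-decay, removal of that error through the jointly dilation-invariant effective normal operator $N_0(\hat P(\sigma))$ rescaled via $X=x/|\sigma|$ to the limiting absorption estimate of \cite{Vasy:Limiting-absorption-lag} on an exact cone with a conic point (Proposition~\ref{prop:zero-normal}), and a compactness argument tied to the trivial nullspace of $P(0)$. However, two steps are genuine gaps as written. First, ``feeding this invertibility back into the symbolic estimate and iterating removes the error term'' is not a valid step: combining the symbolic and normal operator estimates (Proposition~\ref{prop:b-res-imp-est}) still leaves an error term, now merely \emph{relatively compact} (a gain of $x^\delta$ at lower regularity), and a compact error cannot be iterated away. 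Its removal — and, crucially, the uniformity of the constant as $\sigma\to 0$, which is the content of the theorem's displayed estimate — is obtained in the paper by a contradiction argument applied to sequences $\sigma_j\to 0$, $u_j$ witnessing failure of the estimate: the compact error forces a nonzero weak limit $v$, one checks $(x+\sigma_j)^{\alpha-1}\hat P(\sigma_j)u_j\to x^{\alpha-1}\hat P(0)x^{-\alpha}v$, and then $x^{-\alpha}v$ is a nontrivial element of $\Ker\hat P(0)$ on a space with weight in the central interval, contradicting the hypothesis; invertibility then follows because $\hat P(\sigma)$ is already Fredholm of index zero for fixed $\sigma\neq 0$ by \cite{Vasy:Limiting-absorption-lag}. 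Your compactness argument is applied only to kernel elements, which at best yields injectivity for each small $\sigma$ but not the uniform bound; you must run it on the estimate itself.

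Second, your assertion that invertibility of the rescaled model ``holds exactly when the combined weight $\alpha-l$ at the conic point avoids the indicial roots'' is false as stated: avoiding the indicial roots gives only Fredholm estimates for the model, while the invertibility needed in Proposition~\ref{prop:zero-normal} requires the \emph{central} weight interval $(\nuc_-,\nuc_+)$ together with (approximate) formal self-adjointness of the unconjugated model — the absence of kernel and cokernel comes from a boundary pairing argument for real $\varsigma$ and a constancy-of-index argument for $\im\varsigma>0$, and the paper stresses in Remark~\ref{rem:limiting-absorption} that it is precisely here that the central interval, rather than any interval free of indicial roots, is essential. Since your final $\alpha$-interval is the central one, the conclusion you invoke is correct, but your justification is not. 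A lesser point: the paper's symbolic step is not organized as separate real principal type propagation plus radial point estimates at distinct incoming and outgoing radial sets; since these collide at the zero section as $\sigma\to 0$, the estimate is a single positive commutator with the weight (cut off near $x/\sigma=0$) as commutant, corrected by the operator $S$ of Lemma~\ref{lemma:modified-commutator-real-princ}, and for $\im\sigma\geq 0$ it relies on the sign-definite decomposition $\im\tilde P(\sigma)=-(\im\sigma)T+W$ of Lemma~\ref{lemma:im-P-sigma-structure}; your sketch would need these ingredients to make the symbolic estimate uniform down to $\sigma=0$.
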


\begin{rem}\label{rem:main-b-gen}
We note that Remark~\ref{rem:main-b} remains valid in the present
more general setting, including its proof; the only way the proof is
affected is via inputting the estimate of Theorem~\ref{thm:main-gen}
instead of Theorem~\ref{thm:main}.
\end{rem}

\section{Resolved b-algebra}\label{sec:resolved}
In this section we introduce the space $\Psibr^{m,l,\nu,\delta}(X)$ of resolved b-pseudodifferential operators and prove its
basic properties. We explicitly consider $\sigma\geq 0$ for notational
simplicity; for $\sigma\leq 0$ one simply replaces $\sigma$ by
$|\sigma|$ at various points below, while for $\sigma$ with
$\im\sigma\geq 0$ the blow up discussed below is that of $\Tb^*_{\pa
  X}X\times\{0\}$ in $\Tb^*X\times\{\sigma\in\Cx:\ \im\sigma\geq 0\}$.

We recall that the b-pseudodifferential algebra is discussed in detail
in Melrose's book \cite{Melrose:Atiyah}; the companion paper
\cite{Vasy:Limiting-absorption-lag} as well as \cite[Section~2]{Vasy:Zero-energy} have a
brief summary of its properties, while
\cite[Section~6]{Vasy:Minicourse} has a detailed presentation relating it to
H\"ormander's uniform pseudodifferential algebra \cite[Chapter~18.1]{Hor}.

\begin{figure}[ht]
\begin{center}
\includegraphics[width=100mm]{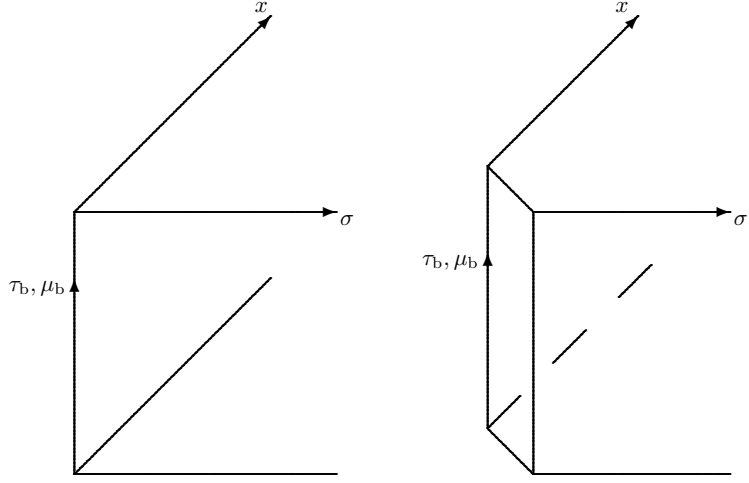}
\end{center}
\caption{The resolved b-cotangent bundle, on the right, obtained by blowing up the corner $\overline{\Tb^*}_{\pa
  X}X\times\{0\}$
  of $\overline{\Tb^*}X\times[0,1)_\sigma$, shown on the left.}
\label{fig:b-res}
\end{figure}

At the phase space level, the resolved algebra
simply blows up $\Tb^*X\times[0,1)_\sigma$ at the corner $\Tb^*_{\pa
  X}X\times\{0\}$; since the corner is given by $x=0$, $\sigma=0$, projectively this amounts to the introduction of
$x/\sigma$ and $\sigma/x$ as smooth variables, where bounded. In order
to work uniformly at fiber infinity, it is best to consider the blow
up
$$
\Tbr(X,[0,1))=[\overline{\Tb^*}X\times[0,1); \overline{\Tb^*}_{\pa X}X\times\{0\}]
$$
of $\overline{\Tb^*}_{\pa X}X\times\{0\}$ in
$\overline{\Tb^*}X\times[0,1)$, see Figure~\ref{fig:b-res}. Here $\overline{\Tb^*}X$ is the fiber radially
compactified b-cotangent bundle, i.e.\ the fibers are compactified, being
vector spaces, to balls, thus to manifolds with boundary, see
\cite{RBMSpec} for the compactification in the scattering setting, and
\cite[Section~5]{Vasy:Zero-energy} for a discussion connecting the
scattering and b-settings in the context of second microlocalization
at the zero section. Examples of defining functions of the
lift of $x=0$, the
front face (i.e.\ the lift of $x=0$, $\sigma=0$), resp.\ the lift of $\sigma=0$ are
$$
(1+\sigma/x)^{-1}=\frac{x}{x+\sigma},\ x+\sigma,\  \text{resp.}\
(1+x/\sigma)^{-1}=\frac{\sigma}{x+\sigma};
$$
a defining function of
fiber infinity is $(\taub^2+|\mub|^2)^{-1/2}$. Note that b-vector fields on the total
space lift to b-vector fields on its resolution since a boundary face
is being resolved; in particular
$x\pa_x$, $\pa_y$, $\tilde\rho^{-1}\pa_{\taub}$,
$\tilde\rho^{-1}\pa_{\mub}$, $\tilde\rho$ a defining function of fiber
infinity, thus $\tilde\rho^{-1}$ equivalent to the larger of
$\taub,\mub$, i.e.\ b-vector fields on the fibers over fixed $\sigma$,
lift to such, as does $\sigma\pa_\sigma$. Thus, a conormal family of
symbols on the resolved space is also a conormal family on the
original, unresolved, space, and thus can be regarded as a family of
b-symbols bounded by an appropriate power of $\sigma$, thus quantized,
etc. We write the symbol orders as
$$
S^{m,l,\nu,\delta}(\Tbr^*(X,[0,1))),
$$
where
$m$ is the b-differential order, $l$ is the order at (the lift of) $x=0$,
$\nu$ is the order at the front face (i.e.\ the lift of $x=\sigma=0$)
and $\delta$ is the order at (the lift of)
$\sigma=0$. Since $\sigma$ is a parameter (is commutative), one can
easily arrange that the last order is $\delta=0$, but it can be useful
to have some flexibility. A typical example of an elliptic symbol of
order $m,l,\nu,\delta$ is
then
$$
(\taub^2+\mub^2)^{m/2}(1+\sigma/x)^{l}(x+\sigma)^{-\nu}(1+x/\sigma)^{\delta}=(\taub^2+\mub^2)^{m/2}(x+\sigma)^{l-\nu+\delta}x^{-l}\sigma^{-\delta}.
$$
Correspondingly, the relationship between
$S^{m,l,\nu,\delta}(\Tbr^*(X,[0,1)))$ and the symbol space
$S^{m,\alpha,\beta}(\Tb^*(X;[0,1)))$ (symbols on $\Tb^*X\times[0,1)$,
of order $\alpha$ at $x=0$, $\beta$ at $\sigma=0$) is
\begin{equation}\label{eq:b-symbol-incl}
S^{m,l,\nu,\delta}(\Tbr^*(X,[0,1)))\subset
S^{m,\alpha,\beta}(\Tb^*(X;[0,1))),\ \alpha\leq l,\ \beta\leq \delta,\ \alpha+\beta\leq\nu.
\end{equation}
Explicitly, cf.\ \cite[Section~2]{Vasy:Zero-energy}, the quantization map, giving $A\in\Psibr^{m,l,\nu,\delta}(X)$, is
\begin{equation}\begin{aligned}\label{eq:b-quantization}
Au(x,y)=(2\pi)^{-n}\int
e^{i(\frac{x-x'}{x}\taub+(y-y')\mub)}&\tilde\psi\Big(\frac{x-x'}{x'}\Big)\\
&a(x,y,\taub,\mub,\sigma) u(x',y')\,d\taub\,d\mub\,\frac{dx'\,dy'}{x'},
\end{aligned}\end{equation}
with $\tilde\psi$ of compact support in $(-1/2,1/2)$, identically $1$
near $0$, which may be regarded as a member of
$\Psib^{m,\alpha,\beta}(X)$ with $\alpha,\beta$ as above, though this
is imprecise unless $\alpha=l$, $\beta=\delta$ and
$\alpha+\beta=\nu$, i.e.\ $\nu=l+\delta$.

It is useful to note
here that
$$
\frac{x'+\sigma}{x+\sigma}=\frac{x'-x}{x+\sigma}+1,
$$
and
$$
\Big|\frac{x'-x}{x+\sigma}\Big|\leq\Big|\frac{x'-x}{x}\Big|,
$$
with $\frac{x'-x}{x+\sigma}$ having the same sign as $\frac{x'-x}{x}$,
so over compact subsets of the {\em b-front face} (the lift of $x=x'=0$ to
the b-double space, which is the space resulting from blowing up this
submanifold, i.e.\ $\pa X\times\pa X$, in $X\times X$), where $\frac{x'-x}{x}$ is
in a compact subset of $(-1,\infty)$ (cf.\ $\tilde\psi$ in \eqref{eq:b-quantization}), $\frac{x'-x}{x+\sigma}$ is in
the same region. Thus, conjugating the localized in compact subsets of
the front face (family) b-algebra by powers of $x+\sigma$ is a
isomorphism, hence one can indeed work with the standard family
b-algebra with $(x+\sigma)^{\tilde\nu}$-weights. Notice that the full weight
$$
(1+\sigma/x)^{l}(x+\sigma)^{-\nu}(1+x/\sigma)^{\delta}
$$
can be rewritten in terms of a power of $x+\sigma$, times powers of
$x$ and $\sigma$, so as the latter two are well-behaved as far as the
conjugation is concerned, so is the total weight.

Indeed,
$$
x'<x\Rightarrow \frac{x'-x}{x}\leq\frac{x'-x}{x+\sigma}\leq 0
$$
and
$$
x'>x\Rightarrow 0\leq \frac{x'-x}{x+\sigma}\leq \frac{x'-x}{x}
$$
show that $\frac{x'+\sigma}{x+\sigma}$ is controlled by
$\frac{x'}{x}=\frac{x'-x}{x}+1$, so even in the small (family)
b-algebra, with infinite order vanishing on the side faces, i.e.\ as
$\frac{x'}{x}$, resp.\ $\frac{x}{x'}$, tend to $0$, so equivalently
the reciprocals $\frac{x}{x'}$, resp.\ $\frac{x'}{x}$, tend to $\infty$,
analogous statements hold.

It is useful to `complete' the resolved b-algebra by order $-\infty$ in the
differential sense terms. For this recall that for the unresolved
family, order $-\infty$ operators, in $\Psib^{-\infty,l}(X)$, have Schwartz kernels which are
conormal at the b-front face (of order $l$, interpreted as a right b-density) and vanish to infinite order at the left
and right faces ($x=0$, resp.\ $x'=0$, lifted to the b-double space),
with smooth (or conormal) behavior in $\sigma$. The new resolution is then
that of the corner given by the b-front face, locally defined by $x+x'$, at $\sigma=0$, which introduces
coordinates $\frac{x+x'}{\sigma},\sigma$ where $\sigma$ is relatively
large, and $\frac{\sigma}{x+x'},x+x'$  where $x+x'$ is such, together
with $\frac{x}{x'}$ or $\frac{x'}{x}$, as well as $y,y'$; see Figure~\ref{fig:b-double-res}. The class of distributions giving the
Schwartz kernel is conormal ones to all boundary hypersurfaces with
infinite order vanishing at the lifts of the left and right faces. Again, as a corner is being blown up, the
property of being conormal does not change, though orders are
affected, so the new class of operators is still a subclass of the
family b-pseudodifferential operators. Note that this blow up indeed
corresponds to the one at the symbol level: the quantization map for
the family \eqref{eq:b-quantization} gives a Schwartz kernel which is the inverse Fourier transform
in the fiber variables of the b-cotangent bundle to the fibers of the
b-double space
over a fixed point $(x,y)$ on the boundary, relative to the diagonal
$\frac{x-x'}{x}=0$, $y-y'=0$, localized by $\tilde\psi$: the blow-up
in the cotangent bundle, i.e.\ that of $x=\sigma=0$, commutes with the inverse
Fourier transform, considered as mapping to the local product space in
$x,y,\sigma,\frac{x-x'}{x},y-y'$. The conjugation invariance by powers of $x+\sigma$ then
follows from the previous paragraph.

\begin{figure}[ht]
\begin{center}
\includegraphics[width=120mm]{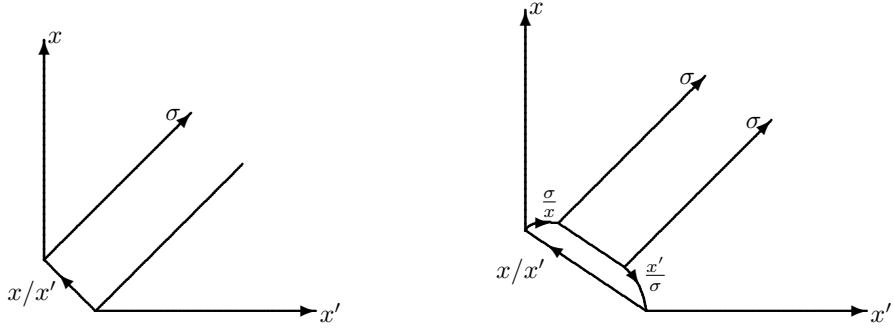}
\end{center}
\caption{The resolved b-double space, on the right, obtained by
  blowing up the corner given by the b-front face at $\sigma=0$
  of the b-double space times $[0,1)_\sigma$, shown on the left.}
\label{fig:b-double-res}
\end{figure}

The standard composition rules hold, including
full asymptotic expansions. Thus, using symbols on the resolved space,
one can define a resolved b-wave front set, $\WFbr'(A)$, resp.\
$\WFbr(u)$, of operator families, resp.\ distributions, with both
being a subset of fiber infinity of the resolved space, which we
denote by $\Sbr^*(X,[0,1))$, with the main point being that points
with various finite values of $x/\sigma$ are now distinguished for
$x=0$, $\sigma=0$. Finally, $A\in\Psibr^{m',l',l',0}(X)$ acts on the
standard b-Sobolev spaces since it lies in a continuous family of
b-operators in $\Psib^{m',l'}(X)$, cf.\ \eqref{eq:b-symbol-incl} taken
with $\beta=0$, giving estimates
$$
\|Au\|_{\Hb^{\tilde r-m',l-l'}}\leq C\|u\|_{\Hb^{\tilde r,l}},
$$
with $C$ independent of $\sigma$; more generally
$A\in\Psibr^{m',l',l'+k',k'}(X)$ gives estimates
$$
\|Au\|_{\Hb^{\tilde r-m',l-l'}}\leq C|\sigma|^{-k'}\|u\|_{\Hb^{\tilde r,l}},
$$
with uniform $C$.

\begin{figure}[ht]
\begin{center}
\includegraphics[width=120mm]{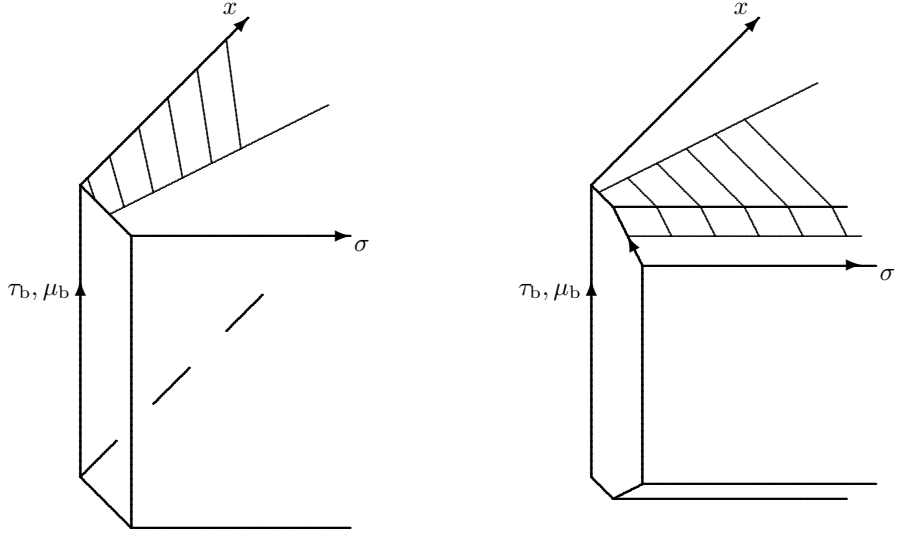}
\end{center}
\caption{Microsupport of the operator $B_1$ in \eqref{eq:br-fiber-infty} on the resolved
  b-cotangent bundle on the left, resp.\ the operator $B_1$ in \eqref{eq:scr-fiber-infty}
  on the
  scattering-b resolved cotangent bundle on the right. Both are shown
  as shaded regions.}
\label{fig:b-sc-res-supp}
\end{figure}

With this, $\hat P(\sigma)$ is
elliptic in $\Psibr^{2,-1,-2,0}(X)$ away from the lift of $x=0$ (i.e.\
away from $x/\sigma=0$), thus elliptic where
$x,S=\sigma/x$ (together with $y,\taub,\mub$) are valid coordinates, at
least for $x$ small, since its
principal symbol is
$x^2(\taub^2+\mub^2)$. Notice that this is a key advantage of working
with the resolved space: on the front face, both $\sigma x(xD_x)$ and
$x^2(xD_x)^2$ have the same decay order, $-2$ (i.e.\ $2$ orders of
decay), while in the decay sense the former dominates at $x/\sigma=0$
(order $-1$) and the latter at $\sigma/x=0$ (order $0$), though of
course only the latter matters in the standard principal symbol sense (order $2$). In particular, elliptic estimates hold in this region:
\begin{equation}\begin{aligned}\label{eq:br-fiber-infty}
&\|(1+x/\sigma)^\delta B_1 u\|_{\Hb^{\tilde r,l}}\\
&\qquad\leq C(\|(1+x/\sigma)^\delta (x+\sigma)^{-2}B_3\hat
P(\sigma)u\|_{\Hb^{\tilde r-2,l}}+\|(1+x/\sigma)^\delta
u\|_{\Hb^{-N,l}}),
\end{aligned}\end{equation}
with $B_1,B_3\in\Psibr^{0,0,0,0}(X)$ with wave front set away from the
lift of $x=0$ (i.e.\ away from $x/\sigma=0$), $B_3$ elliptic on a
neighborhood of $\WFbr'(B_1)$, and
where by a careful arrangement of support properties of $B_3$, one
could also replace $(x+\sigma)^{-2}$ by $x^{-2}$. See Figure~\ref{fig:b-sc-res-supp}.

\begin{figure}[ht]
\begin{center}
\includegraphics[width=120mm]{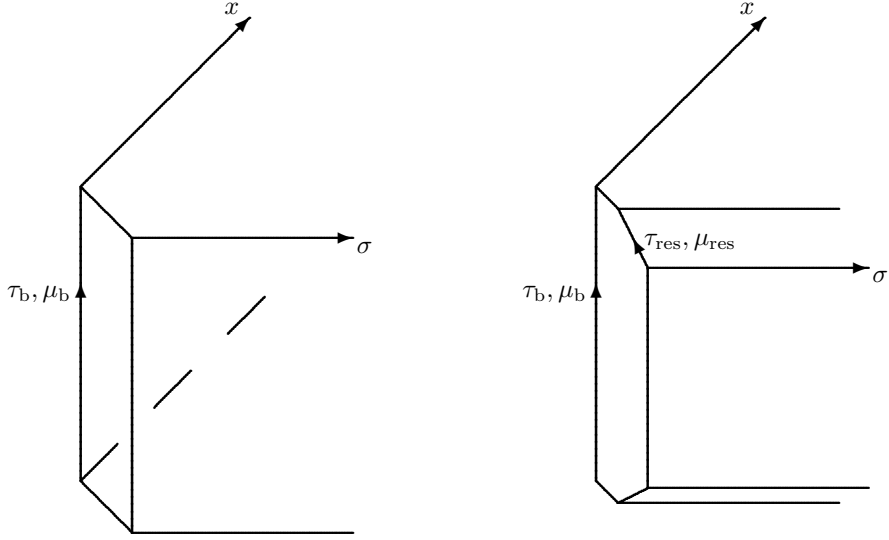}
\end{center}
\caption{The resolved b-cotangent bundle on the left, and its
  scattering-b resolution on the right obtained by blowing up the
  corner $x/\sigma=0$ at fiber infinity (nearest horizontal edges)
  of the resolved b-cotangent bundle. At the pseudodifferential
  operator level the symbolic calculus works at resolved b-fiber
  infinity which is the top (as well as bottom!) face on both pictures, as well as new face
on the right picture, which corresponds to rescaled sc-decay.}
\label{fig:b-sc-res}
\end{figure}

We in fact have
$$
\hat P(\sigma)\in\Psibr^{2,-2,-2,0}(X)+\Psibr^{1,-1,-2,0}(X),
$$
rather than merely in $\Psibr^{2,-1,-2,0}(X)$, which means that there
is a degeneracy at the lift of $x=0$, i.e.\ at $X=0$, where $X=x/\sigma$.
This is, however, fixed by second microlocal considerations, which
take the form of a resolution of fiber infinity at
$X=x/\sigma=0$, see Figure~\ref{fig:b-sc-res}. This introduces a (rescaled, by $\sigma$) scattering momentum
variable,
$$
\tausc=\taub X=\tau/\sigma,\ \musc=\mub X=\mu/\sigma
$$
in the interior of the new front face (via $|(\taub,\mub)|^{-1}$, $X$, $\taubh=\taub/|(\taub,\mub)|$,
$\mubh=\mub/|(\taub,\mub)$ being smooth nearby prior to the
blow-up, giving $|(\taub,\mub)|^{-1}/X$, $X$, $\taubh,\mubh$ smooth
after the blow-up, with the first quotient away from $0$, $\infty$ in
the interior).
Since this is a blow-up of a corner, the conormal spaces are
unchanged, but now one can allow different orders at fiber infinity
(which is now the sc-differential order) and at the new sc-front face,
which is the sc-decay order. One obtains
$\Psiscbr^{s,r,l,\nu,\delta}(X)$ this way, with
$$
(\tausc^2+\musc^2+1)^{s/2}((\taub^2+\mub^2)^{-1}+x^2/\sigma^2)^{-r/2}(1+\sigma/x)^{l}(x+\sigma)^{-\nu}(1+x/\sigma)^{\delta}
$$
being a typical elliptic symbol. Note that $\tausc,(\musc)_j$ are
the principal symbols of $\sigma^{-1} x^2D_x,\sigma^{-1}
xD_{y_j}\in\Psiscbr^{1,0,0,0,1}(X)$ (indeed in
$\Psiscbr^{1,0,-1,0,1}(X)$) which are singular as (non-resolved!) scattering
vector fields at $\sigma=0$.

This pseudodifferential space also gives rise to scattering-b-resolved Sobolev space
$\Hscbr^{s,r,l}(X)$ {\em family}, namely this is a family of Sobolev
spaces on $X$ depending on $\sigma$, which are the same as a topological vector space for
$\sigma\neq 0$, but with a $\sigma$-dependent norm. Before defining it in general, in the special
case when $r=s+l$, this is simply the b-Sobolev space family
(i.e.\ with elements depending on $\sigma$) $\Hb^{s,l}(X)$,
with norm defined independently of $\sigma$.
In general, it is the scattering-b-Sobolev space $\Hscb^{s,r,l}(X)$,
but with a $\sigma$-dependent norm:
\begin{equation}\label{eq:Hscbr-def}
\|u\|_{\Hscbr^{s,r,l}}^2=\|Au\|^2_{L^2}+\|u\|^2_{\Hb^{-N,l}},
\end{equation}
where $A\in\Psiscbr^{s,r,l,l,0}(X)$ is elliptic in the first two
senses (sc-differentiability and sc-decay) and
where $N$ is sufficiently large so that $s\geq -N$, $r\geq -N+l$.
In particular,
\begin{equation}\label{eq:Hscbr-Hb}
\Hscbr^{s,s+l,l}(X)=\Hb^{s,l}(X),
\end{equation}
as one can use an appropriate elliptic element $A$ of $\Psib^{s,l}(X)$ in
the definition in
this case.
With
this definition, we have for
$$
A\in\Psiscbr^{s',r',l',l'+k',k'}(X)=|\sigma|^{-k'}\Psiscbr^{s',r',l',l',0}(X)
$$
that
\begin{equation}\label{eq:Hscbr-bound}
\|Au\|_{\Hscbr^{s-s',r-r',l-l'}}\leq C|\sigma|^{-k'}\|u\|_{\Hscbr^{s,r,l}}.
\end{equation}
To give more feel for these spaces, in particular for the meaning of
the differential
order, we also remark that if $V\in\Vb(X)$, then
$\frac{x}{x+\sigma}V\in\Psiscbr^{1,0,-1,0,0}(X)\subset\Psiscbr^{1,0,0,0,0}(X)$,
and moreover at the resolved scattering fiber-infinity, corresponding
to the first order, at each point there is such a $V$ that is
elliptic. Thus, for instance $\|u\|_{\Hscbr^{s,r,l}}$ is an equivalent
norm to
\begin{equation}\label{eq:Hscbr-vf}
\sum_j\Big\|\frac{x}{x+\sigma}V_j u\Big\|_{\Hscbr^{s-1,r,l}}+\|u\|_{\Hscbr^{s-1,r,l}},
\end{equation}
where the $V_j$ span $\Vb(X)$ (so at each point one of them is
elliptic); in local coordinates one can take these as $xD_x,D_{y_k}$,
so roughly speaking, the differential regularity is in terms of
$\frac{x}{x+\sigma}xD_x,\frac{x}{x+\sigma}D_{y_k}$.

In part in order to become more familiar with these spaces, we make
some further remarks, in particular showing how the main
Theorem~\ref{thm:main} proves Remark~\ref{rem:main-b} and its
strengthened version.
Thus, if
$A\in\Psiscbr^{s-1,r+1,l+1,l+1,0}(X)$, then
$A(x+\sigma)^{-1}\in\Psiscbr^{s-1,r+1,l+1,l+2,0}(X)\subset\Psiscbr^{s-1,r+1,l+2,l+2,0}(X)$,
hence for such appropriate elliptic $A$ we have, using \eqref{eq:Hscbr-bound},
$$
\|(x+\sigma)^{-1} u\|_{\Hscbr^{s-1,r+1,l+1}}^2=\|A (x+\sigma)^{-1}
u\|_{L^2}^2+\|u\|_{\Hb^{-N,l+1}}^2\leq C\|u\|_{\Hscbr^{s-1,r+1,l+2}}^2.
$$
Therefore, Theorem~\ref{thm:main} gives,
with the first inequality being that of the theorem, for $s=r-l$,
\begin{equation}\begin{aligned}\label{eq:main-b-proof}
&\|(x+\sigma)^{\alpha}u\|_{\Hb^{s,l}}=
\|(x+\sigma)^{\alpha}u\|_{\Hscbr^{s,r,l}}\\
&\qquad\leq C\|(x+\sigma)^{\alpha-1}\hat
P(\sigma)u\|_{\Hscbr^{s-2,r+1,l+1}}\leq C\|(x+\sigma)^{\alpha-1}\hat
P(\sigma)u\|_{\Hscbr^{s-1,r+1,l+1}}\\
&\qquad\leq C'\|(x+\sigma)^{\alpha}\hat
P(\sigma)u\|_{\Hscbr^{s-1,r+1,l+2}}=C'\|(x+\sigma)^{\alpha}\hat
P(\sigma)u\|_{\Hb^{s-1,l+2}},
\end{aligned}\end{equation}
proving Remark~\ref{rem:main-b}.

This can be strengthened (made less lossy relative to the main
theorem) by combining the argument with
\eqref{eq:Hscbr-vf}. Thus, with $V_j$ as there, taking $s=r-l+1$, we have
\begin{equation*}\begin{aligned}
&\sum_j\Big\|\frac{x}{x+\sigma}V_j 
(x+\sigma)^{\alpha}u\Big\|_{\Hb^{s-1,l}}+\|(x+\sigma)^{\alpha}u\|_{\Hb^{s-1,l}}\\
&=\sum_j\Big\|\frac{x}{x+\sigma}V_j
(x+\sigma)^{\alpha}u\Big\|_{\Hscbr^{s-1,r,l}}+\|(x+\sigma)^{\alpha}u\|_{\Hscbr^{s-1,r,l}}\\
&\qquad\leq C'' \|(x+\sigma)^{\alpha}u\|_{\Hscbr^{s,r,l}}\\
&\qquad\leq C\|(x+\sigma)^{\alpha-1}\hat
P(\sigma)u\|_{\Hscbr^{s-2,r+1,l+1}}\\
&\qquad\leq C'\|(x+\sigma)^{\alpha}\hat
P(\sigma)u\|_{\Hscbr^{s-2,r+1,l+2}}\\
&\qquad=C'\|(x+\sigma)^{\alpha}\hat
P(\sigma)u\|_{\Hb^{s-2,l+2}},
\end{aligned}\end{equation*}
and in the first term $(x+\sigma)^\alpha$ can be commuted to the front
(up to changing constants) if one wishes.
In particular, on the left hand side, one can estimate $\|x V_j (x+\sigma)^{\alpha}
u\|_{\Hscb^{s-1,r,l}}$, which means
$\|(x+\sigma)^{\alpha}u\|_{\Hscb^{s,r,l}}=\|(x+\sigma)^{\alpha}u\|_{\Hscb^{s,s-1+l,l}}$, with $\Hscb$ the
standard second microlocal space (with $\sigma$-independent norm) as
in \cite[Section~5]{Vasy:Zero-energy}; this gains an extra
sc-derivative relative to Remark~\ref{rem:main-b}; it gives
\begin{equation}\begin{aligned}\label{eq:main-scb-proof}
\|(x+\sigma)^{\alpha}u\|_{\Hscb^{s,s+l-1,l}}\leq C \|(x+\sigma)^{\alpha}\hat
P(\sigma)u\|_{\Hscb^{s-2,s+l,l+2}}.
\end{aligned}\end{equation}

We now turn to our operator $\hat P(\sigma)$ and how it fits within our
resolved algebra.
In this scattering-b-resolved algebra we have
$$
\hat P(\sigma)\in\Psiscbr^{2,0,-1,-2,0}(X)
$$
with principal symbol in the first, sc-differential, sense at $X=0$ being
$$
x^2(\taub^2+\mub^2)=\sigma^2(\tausc^2+\musc^2),
$$
and the dual metric function in general (including away from $X=0$).
This is elliptic at the lift of fiber infinity at, thus near, $X=0$, where
$\sigma=0$ defines the base-front-face (the penultimate order),
$(\tausc^2+\musc^2)^{-1/2}$ the scattering fiber infinity (the first
order), while the third (b-decay) and last ($\sigma/x=0$ behavior)
orders are irrelevant. Now, even in the sc-decay sense, we have
ellipticity near sc-fiber-infinity, for in that sense the principal
symbol is
$$
x^2(\taub^2+\mub^2)-2\sigma x\taub=\sigma^2(\tausc^2+\musc^2-2\tausc)=\sigma^2((\tausc-1)^2+\musc^2-1),
$$
which is elliptic for sufficiently large $(\tausc,\musc)$. Thus, we
have microlocal elliptic estimates
\begin{equation}\label{eq:scr-fiber-infty}
\| B_1 u\|_{\Hscbr^{s,r,l}}\leq C(|\sigma|^{-2}\|B_3\hat
P(\sigma)u\|_{\Hscbr^{s-2,r,l}}+\|u\|_{\Hscbr^{-N,-N,-N}}),
\end{equation}
for $B_1$ microlocalizing in a neighborhood of sc-fiber infinity, made trivial near the lift of $X=0$
(the b-front face), with $B_3$ similar, but elliptic on the wave front
set of $B_1$; see Figure~\ref{fig:b-sc-res-supp}.

In combination these two elliptic estimates \eqref{eq:br-fiber-infty}-\eqref{eq:scr-fiber-infty} give
\begin{equation}\begin{aligned}\label{eq:scb-fiber-infty}
\|(1+x/\sigma)^\delta B_1 u\|_{\Hscbr^{s,r,l}}\leq C&(\|(1+x/\sigma)^\delta (x+\sigma)^{-2}B_3\hat
P(\sigma)u\|_{\Hscbr^{s-2,r,l}}\\
&\qquad\qquad+\|(1+x/\sigma)^\delta
u\|_{\Hb^{-N,l}}).
\end{aligned}\end{equation}

\section{Symbolic estimates}\label{sec:symbolic}
We now turn to symbolic estimates at
$X=x/\sigma=0$. This is a simple extension of the argument for the
limiting absorption principle as presented in \cite{Vasy:Limiting-absorption-lag}.

Since from the standard conjugated scattering picture \cite{Vasy:Limiting-absorption-lag} we already know
that the zero section has radial points, the only operator that can give
positivity microlocally in a symbolic commutator computation is the
weight. Recall that the actual positive commutator estimates utilize
the computation of
\begin{equation}\label{eq:twisted-comm-expr}
i(\hat P(\sigma)^*A-A\hat P(\sigma))=i(\hat P(\sigma)^*-\hat P(\sigma))A+i[\hat P(\sigma),A]
\end{equation}
with $A=A^*$, so for non-formally-self-adjoint $\hat P(\sigma)$ there is a
contribution from the skew-adjoint part
$$
\im \hat P(\sigma)=\frac{1}{2i}(P(\sigma)-P(\sigma)^*)
$$
of
$\hat P(\sigma)$, relevant for us when $\sigma$ is not real or when
$\sigma$ is real but $\beta,\gamma\neq 0$; here the notation `$\im \hat P(\sigma)$' is motivated by the
fact that its principal symbol is actually $\im \hat p(\sigma)$, with
$\hat p(\sigma)$ being the principal symbol of $\hat P(\sigma)$. It is actually
a bit better to rewrite this, with
$$
\re \hat P(\sigma)=\frac{1}{2}(P(\sigma)+P(\sigma)^*)
$$
denoting the
self-adjoint part of $\hat P(\sigma)$, as
\begin{equation}\label{eq:commutator-expr-8}
i(\hat P(\sigma)^*A-A\hat P(\sigma))=(\im \hat P(\sigma) A+A\im \hat P(\sigma))+i[\re \hat P(\sigma),A].
\end{equation}
If
$A\in\Psibr^{2\tilde r-1,2l+1,2\nu+2,-\infty}$, $\hat P(\sigma)\in\Psibr^{2,-1,-2,0}(X)$
implies that the second term (the commutator) is a priori in
$\Psibr^{2\tilde r,2l,2\nu,-\infty}$. Here we are setting the last
order to $-\infty$ since we are working near $x/\sigma=0$ (for away
from there we already have elliptic estimates!), so it plays
no role; this also means that one can simply use
$\sigma^{-2\nu+2l-1}$, resp.\ $\sigma^{-2\nu+2l}$,
as the weight capturing the front face behavior for $A$, resp.\ the
second term, if $x^{-2l-1}$, resp.\ $x^{-2l}$ is used
as the spatial weight.
Via the usual quadratic form argument this thus estimates
$u$ in $|\sigma|^{\nu-l} \Hb^{\tilde r,l}$ in terms of $|\sigma|^{\nu-l-1} \hat P(\sigma)u$ in $\Hb^{\tilde
  r-1,l+1}$, assuming non-degeneracy. 

However, we in fact have
$$
\hat P(\sigma)\in\Psibr^{2,-2,-2,0}(X)+\Psibr^{1,-1,-2,0}(X),
$$
which means that the second term of \eqref{eq:commutator-expr-8} (the
commutator) will be in
$$
\Psibr^{2\tilde r,2l-1,2\nu,-\infty}+\Psibr^{2\tilde r-1,2l,2\nu,-\infty},
$$
hence will degenerate as an element of $\Psibr^{2\tilde
  r,2l,2\nu,-\infty}$. This is fixed by second microlocal
considerations, namely considering
\begin{equation}\label{eq:symb-A-class}
A\in\Psibr^{2\tilde
  r-1,2l+1,2\nu+2,-\infty}(X)=\Psiscbr^{2\tilde r-1,2(\tilde
  r+l),2l+1,2\nu+2,-\infty}(X),
\end{equation}
and using that
$$
\hat P(\sigma)\in
\Psiscbr^{2,0,-1,-2,0}(X)
$$
so that the commutator lies in
$$
[\re\hat P(\sigma),A]\in \Psiscbr^{2\tilde r,2(\tilde
  r+l)-1,2l,2\nu,-\infty}(X),
$$
for the algebra is commutative to leading order in the first two
orders (namely sc-differentiability and sc-decay). In fact, we modify this
somewhat by taking an appropriate
$S\in\Psibr^{-1,0,0,0}(X)=\Psiscbr^{-1,-1,0,0,0}(X)$ and considering
\begin{equation}\label{eq:comm-S-expr}
i[\re\hat P(\sigma),A]+AS\hat P(\sigma)+\hat P(\sigma)^*SA \in\Psiscbr^{2\tilde r,2(\tilde
  r+l)-1,2l,2\nu,-\infty}(X),
\end{equation}
with the last two terms having principal symbol $2\re\hat p(\sigma)\hat
s a$, if $a$ is the principal symbol of $A$, and $\hat s$ of $S$,
where $S$ will be chosen in a manner that cancels an indefinite term
near the scattering zero section. We remark that from the second
microlocal perspective, the rescaled sc-differential order is
irrelevant in view of the elliptic estimate
\eqref{eq:scb-fiber-infty}, but a byproduct of the particular
choice of $s$ is that the principal symbol of the
commutator $[\re\hat P(\sigma),A]$ in the
sc-differential sense is also cancelled at $x=0$.

On the other hand, in general in the first
term
$$
\im \hat P(\sigma)\in \Psibr^{1,-1,-2,0}(X)=\Psiscbr^{1,0,-1,-2,0}(X),
$$
so the
first term of \eqref{eq:commutator-expr-8} is in $\Psibr^{2\tilde r,2l,2\nu,-\infty}(X)$, so is the
same order, $2l$, in the b-decay sense, as well as in the resolved
front face sense (order $2\nu$), as the modified commutator, but is actually bigger, order
$2(\tilde r+l)$, in scattering decay sense. However, when $\sigma$ is real,
then
$$
\im\hat P(\sigma)\in\Psiscbr^{1,-1,-1,-2,0}(X),
$$
so with $A$ as in \eqref{eq:symb-A-class}
$$
\im \hat P(\sigma) A+A\im \hat P(\sigma)\in\Psiscbr^{2\tilde r,2(\tilde
  r+l)-1,2l,2\nu,-\infty}(X),
$$
which has the same orders as $[\re\hat P(\sigma),A]$ and \eqref{eq:comm-S-expr}; we make some further adjustments to $S$ to
obtain a definite sign.

Now, going back to the issue of the zero section consisting of radial points,
we compute the principal symbol of the second term of \eqref{eq:twisted-comm-expr} (which is the only
term when $\sigma$ is real and $P(\sigma)=P(\sigma)^*$) when
$$
A \in\Psibr^{2\tilde r-1,2l+1,-2\tilde\nu+2l+1,-\infty}(X)
$$
is the
weight (as mentioned above, only this can give positivity) times a
cutoff in $x/\sigma$, i.e.
$$
x^{-2l-1}(x+\sigma)^{2\tilde\nu} (\taub^2+\mub^2)^{\tilde
  r-1/2}\phi(x/\sigma),\qquad \nu=-\tilde\nu+l-1/2,
$$
with $\phi\geq 0$ supported near $0$, identically $1$ in a smaller
neighborhood of $0$. Computationally it is better to take the equivalent, in view of the support
of $\phi$, 
$$
a=x^{-2l-1}\sigma^{2\tilde\nu} (\taub^2+\mub^2)^{\tilde
  r-1/2}\phi(x/\sigma),
$$
since $\sigma$ commutes with all operators --- the weight
$(x+\sigma)^{2\tilde\nu}$ has an equivalent effect as long
as $\supp\phi$ is taken sufficiently small.

\begin{lemma}
The principal symbol of
$$
(\im \hat P(\sigma) A+A\im \hat
P(\sigma))+i[\re \hat P(\sigma),A]\in\Psiscbr^{2\tilde r,2(\tilde r+l)-1,2l,-2\tilde\nu+2l-1,-\infty}(X)
$$
for real $\sigma$, suppressing
the $\phi(x/\sigma)$ factor as well as terms
involving its derivative, is
\begin{equation}\begin{aligned}\label{eq:commutator-skew-b-version}
x^{-2l}(\taub^2+\mub^2)^{\tilde
  r-3/2}\sigma^{2\tilde\nu}\Big(&4\sigma\Big(\Big(l+\tilde
r-\frac{\im(\beta-\gamma)}{2}\Big) \taub^2\\
&\qquad\qquad\qquad\qquad\qquad+\Big(l+1/2-\frac{\im(\beta-\gamma)}{2}\Big)\mub^2\Big)\\
&-4 x\Big(l+\tilde r-\frac{\im\beta}{2} \Big)\taub(\taub^2+\mub^2)\Big).
\end{aligned}\end{equation}
\end{lemma}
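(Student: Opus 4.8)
The plan is a direct symbolic computation in the resolved scattering-b algebra, a mild extension of the positive commutator calculation underlying the limiting absorption principle in \cite{Vasy:Limiting-absorption-lag}. Since $A=A^*$, the operator in question is, by \eqref{eq:commutator-expr-8}, $(\im\hat P(\sigma)A+A\im\hat P(\sigma))+i[\re\hat P(\sigma),A]$; its membership in the stated class is a priori from the calculus (the algebra being commutative to leading order in the sc-differential and sc-decay senses, cf.\ \eqref{eq:comm-S-expr}), and its principal symbol there is
\begin{equation*}
H_{\re\hat p(\sigma)}a+2\,(\im\hat p(\sigma))\,a,
\end{equation*}
where $\hat p(\sigma)$ is the principal symbol of $\hat P(\sigma)$, $a$ the given symbol, and $H_p=(\partial_{\taub}p)\,x\partial_x-(x\partial_x p)\,\partial_{\taub}+(\partial_{\mub}p)\cdot\partial_y-(\partial_y p)\cdot\partial_{\mub}$ the b-Hamilton vector field in coordinates $(x,y,\taub,\mub)$. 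Throughout, terms involving $\phi'(x/\sigma)$ are discarded, and it suffices to work with the leading part of $\hat P(\sigma)$ at $X=x/\sigma=0$ in the sc-decay sense, since near sc-fiber infinity and away from the lift of $x=0$ one already has the elliptic estimates \eqref{eq:scb-fiber-infty}.

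First I identify the relevant pieces. From Proposition~\ref{prop:full-hat-P-sigma}, modulo terms in $x(x+\sigma)(x^\delta+\sigma)S^0\Diffb^2(X)$ or of strictly lower b-differential order, $\re\hat P(\sigma)$ has b-principal symbol $x^2(\taub^2+\mub^2)-2\sigma x\taub$, contributed by $\Delta_{g_0}$ and $-2\sigma x^2D_x$. For the skew part, the key point is that for real $\sigma$ the multiplier $e^{i\sigma/x}$ has modulus one, so $e^{-i\sigma/x}\Delta_{g_0}e^{i\sigma/x}$ is self-adjoint; as this operator equals $\Delta_{g_0}-2\sigma\big(x^2D_x+i\frac{n-1}{2}x\big)+\sigma^2(\ldots)$, the skew-adjoint part of $-2\sigma\big(x^2D_x+i\frac{n-1}{2}x\big)$ is $O(\sigma^2)$, hence of strictly lower front-face order. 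Together with \eqref{eq:Im-P}, \eqref{eq:Im-Q}, the cancellation of the $\gamma_I x$ term of $\frac{1}{2i}(Q-Q^*)$ against the $-x\hat\gamma$ term of $\hat Q$ from Proposition~\ref{prop:full-hat-P-sigma}, the $\sigma^2$ prefactor on the $\hat R$ contributions, and the fact that the $\beta'$ and subprincipal parts of $\im P(0)$ are of lower b-differential order, this gives
\begin{equation*}
\im\hat p(\sigma)=\beta_I\,x^2\taub-\sigma x\,\im(\beta-\gamma)
\end{equation*}
modulo strictly lower order terms, with $\beta_I=\im\beta$ since $\beta$ is skew-adjoint.

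Next I apply $H_{\re\hat p(\sigma)}=H_{x^2(\taub^2+\mub^2)}-2\sigma H_{x\taub}$ to $a=x^{-2l-1}\sigma^{2\tilde\nu}(\taub^2+\mub^2)^{\tilde r-1/2}$ (the cutoff dropped), using $x\partial_x a=-(2l+1)a$ and $\partial_{\taub}a=(2\tilde r-1)\taub(\taub^2+\mub^2)^{-1}a$. The only subtlety is $H_{x^2|\mub|^2}$: its $\partial_y$ and $\partial_{\mub}$ parts act on the $y$-dependence of $|\mub|^2=h_0^{ij}(y)\mub_i\mub_j$, both in its coefficient and in the factor $(\taub^2+\mub^2)^{\tilde r-1/2}$ of $a$, and these contributions cancel since $|\mub|^2$ is constant along its own geodesic flow, leaving $H_{x^2|\mub|^2}a=-2(2\tilde r-1)\,x\taub|\mub|^2\,D$ with $D:=x^{-2l}(\taub^2+\mub^2)^{\tilde r-3/2}\sigma^{2\tilde\nu}$. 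Carrying this out gives
\begin{equation*}
H_{\re\hat p(\sigma)}a=-4(l+\tilde r)\,x\taub(\taub^2+\mub^2)\,D+4\sigma\big((l+\tilde r)\taub^2+(l+\tfrac12)\mub^2\big)D,
\end{equation*}
while $2(\im\hat p(\sigma))a=2\beta_I\,x\taub(\taub^2+\mub^2)\,D-2\,\im(\beta-\gamma)\,\sigma(\taub^2+\mub^2)\,D$; adding these and collecting the coefficients of $x\taub(\taub^2+\mub^2)D$, of $\sigma\taub^2 D$, and of $\sigma\mub^2 D$ yields exactly \eqref{eq:commutator-skew-b-version}.

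The main obstacle is the order bookkeeping in the first step: one must check that $i\frac{n-1}{2}x$ together with the skew part (relative to the metric density) of $x^2D_x$ contribute nothing to $\im\hat p(\sigma)$ at the orders under consideration, that the $\gamma_I x$ cancellation inside $\im\hat Q$ is exact, and that all remaining pieces — the $\beta'$ and subprincipal parts of $\im P(0)$, the skew parts of $\hat Q$ and $\hat R$, the $S^{-\delta}$ symbolic remainders, and the errors separating $\hat P(\sigma)$ from its effective normal operator — are of strictly lower order in either the b-differential or the front-face sense, hence invisible in \eqref{eq:commutator-skew-b-version}. Once this is settled, the Hamilton-flow computation, with the geodesic-invariance cancellation noted, is routine.
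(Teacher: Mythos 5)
Your proposal is correct and follows essentially the same route as the paper's proof: compute the Poisson bracket of $\re\hat p(\sigma)=x^2(\taub^2+\mub^2)-2\sigma x\taub$ with the weight symbol $x^{-2l-1}\sigma^{2\tilde\nu}(\taub^2+\mub^2)^{\tilde r-1/2}$, add $2\im\hat p(\sigma)\,a$ with $\im\hat p(\sigma)=x^2\taub\,\im\beta-\sigma x\,\im(\beta-\gamma)$ read off from Proposition~\ref{prop:full-hat-P-sigma} and \eqref{eq:Im-P}--\eqref{eq:Im-Q}, and collect terms, exactly as in \eqref{eq:commutator-b-version-1}--\eqref{eq:commutator-b-version-2}. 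The extra checks you supply (exact self-adjointness of $-2\sigma(x^2D_x+i\tfrac{n-1}{2}x)$ for real $\sigma$, the $\gamma_I x$ cancellation inside $\hat Q$, and the vanishing of the $y$-dependent contributions since $|\mub|^2_{h_0}$ is constant along its own flow) are points the paper leaves implicit by citing Proposition~\ref{prop:full-hat-P-sigma}, and they are handled correctly.
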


\begin{rem}
The cutoff factor $\phi(x/\sigma)$ contributes an additional term to the commutator, but as it is supported in
the elliptic region, this is estimated by the elliptic estimate, so
henceforth can be ignored. 
\end{rem}

\begin{proof}
Since the principal symbol of $\re
\hat P(\sigma)$ in the joint sc-differential-sc-decay sense is
$$
\re \hat p(\sigma)=x^2(\taub^2+\mub^2)-2x\re\sigma\taub=x^2(\taub^2+\mub^2)-2x\sigma\taub,
$$
we compute
\begin{equation}\begin{aligned}\label{eq:commutator-b-version-1}
&\{x^2(\taub^2+\mub^2)-2x\sigma\taub,x^{-2l-1}(\taub^2+\mub^2)^{\tilde
  r-1/2}\}\\
&=(2x^2\taub-2x\sigma)(-2l-1)
x^{-2l-1} (\taub^2+\mub^2)^{\tilde
  r-1/2}\\
&\qquad-(2x^2(\taub^2+\mub^2) -2x\sigma\taub) x^{-2l-1} 2(\tilde r-1/2)\taub (\taub^2+\mub^2)^{\tilde
  r-3/2}.
\end{aligned}\end{equation}
Expanding and rearranging,
\begin{equation}\begin{aligned}\label{eq:commutator-b-version-2}
&=4\sigma x^{-2l}(l+1/2)(\taub^2+\mub^2)^{\tilde
  r-1/2}\\
&\qquad+4\sigma(\tilde r-1/2)x^{-2l}\taub^2 (\taub^2+\mub^2)^{\tilde
  r-3/2}\\
&\qquad-4x^{-2l+1}(l+1/2)\taub (\taub^2+\mub^2)^{\tilde
  r-1/2}\\
&\qquad-4(\tilde r-1/2)x^{-2l+1}\taub (\taub^2+\mub^2)^{\tilde
  r-1/2}\\
&=x^{-2l} (\taub^2+\mub^2)^{\tilde
  r-3/2}\Big(4\sigma\Big((l+1/2) (\taub^2+\mub^2)+(\tilde
r-1/2)\taub^2\Big)\\
&\qquad\qquad\qquad\qquad-4x(l+\tilde r )\taub(\taub^2+\mub^2)\Big)\\
&=x^{-2l}(\taub^2+\mub^2)^{\tilde
  r-3/2}\Big(4\sigma\Big((l+\tilde r) \taub^2+(l+1/2)\mub^2\Big)\\
&\qquad\qquad\qquad\qquad\qquad-4x(l+\tilde r )\taub(\taub^2+\mub^2)\Big).
\end{aligned}\end{equation}

On the other hand, we have an additional term $\im \hat P(\sigma)
A+A\im \hat P(\sigma)$; by Proposition~\ref{prop:full-hat-P-sigma} its principal symbol for real $\sigma$ is
\begin{equation*}\begin{aligned}
&(2x\taub\im\beta-2\sigma\im(\beta-\gamma)) xa\\
&=2x^{-2l+1}\taub(\im\beta)\sigma^{2\tilde\nu} (\taub^2+\mub^2)^{\tilde
  r-1/2}\phi(x/\sigma)\\
&\qquad\qquad-2\im(\beta-\gamma)x^{-2l}\sigma^{2\tilde\nu+1} (\taub^2+\mub^2)^{\tilde
  r-1/2}\phi(x/\sigma).
\end{aligned}\end{equation*}

Thus, the total expression, suppressing the
$\phi(x/\sigma)$ factor as well as terms with its derivatives,
\begin{equation*}\begin{aligned}
x^{-2l}(\taub^2+\mub^2)^{\tilde
  r-3/2}\sigma^{2\tilde\nu}\Big(&4(\re\sigma)\Big(\Big(l+\tilde
r-\frac{\im(\beta-\gamma)}{2}\Big) \taub^2\\
&\qquad\qquad\qquad\qquad\qquad+\Big(l+1/2-\frac{\im(\beta-\gamma)}{2}\Big)\mub^2\Big)\\
&-4 x\Big(l+\tilde r-\frac{\im\beta}{2} \Big)\taub(\taub^2+\mub^2)\Big),
\end{aligned}\end{equation*}
proving the lemma.
\end{proof}

\begin{rem}\label{rem:regularizer-choice}
In an analogue of Remark~4.2 of \cite{Vasy:Limiting-absorption-lag},
we record the impact of having an additional
regularizer factor, namely replacing $a$ by
$$
a^{(\ep)}=a f_\ep,
$$
as is standard  in positive
commutator estimates, including at radial points, see the references in
\cite{Vasy:Limiting-absorption-lag}. The slightly delicate issue at
radial points is the limitation of regularizability, which was not a problem
in \cite{Vasy:Limiting-absorption-lag} since we work in a
neighborhood of the radial set at the zero section there, and the
second microlocal setup means that the only potential issue (from the perspective
of regularization actually needed and limited from the purely scattering, as opposed to the
second microlocal, setting) amounts to
getting additional b-decay, which is irrelevant for the symbolic
considerations. However, in the present setting the two radial sets
are simultaneously considered, naturally in view of the $\sigma\to
0$ limit (since the radial sets `collide' in the limit), and thus the
limitations of regularizability are relevant.

As in \cite[ Remark~4.2]{Vasy:Limiting-absorption-lag},
we can take the regularizer of the form
$$
f_\ep(\taub^2+\mub^2),\ f_\ep(s)=(1+\ep s)^{-K/2},
$$
where $K>0$ fixed and $\ep\in[0,1]$, with the interesting
behavior being the $\ep\to 0$ limit. Note that $f_\ep(\taub^2+\mub^2)$
is a symbol of order $-K$ for $\ep>0$, but is only uniformly
bounded in symbols of order $0$, converging to $1$ in symbols of
positive order. Then
$$
sf'_\ep(s)=-\frac{K}{2} \frac{\ep s}{1+\ep s} f_\ep(s),
$$
and $0\leq \frac{\ep s}{1+\ep s}\leq 1$, so in particular
$sf'_\ep(s)/f_\ep(s)$ is bounded. {\em Just as in \cite[ Remark~4.2]{Vasy:Limiting-absorption-lag}}, the effect of this is to add an
overall factor of $f_\ep(\taub^2+\mub^2)$ to
\eqref{eq:commutator-skew-b-version}
and \eqref{eq:commutator-b-version-1} as well as the subsequent
expressions in the above proof, and replace every occurrence of
$\tilde r$, other than those in the exponent, by
\begin{equation}\label{eq:tilde-r-replace}
\tilde r+(\taub^2+\mub^2)
\frac{f'_\ep(\taub^2+\mub^2)}{f_\ep(\taub^2+\mub^2)}=\tilde r-\frac{K}{2}\frac{\ep(\taub^2+\mub^2)}{1+\ep(\taub^2+\mub^2)}.
\end{equation}
\end{rem}

It is useful to rewrite the last term in the outermost parentheses of
\eqref{eq:commutator-skew-b-version} since it comes with an indefinite
sign due to the factor of $\taub$.

\begin{lemma}\label{lemma:modified-commutator-real-princ}
Let
$$
\hat s=2\Big(l+\tilde r-\frac{\im\beta}{2} \Big)
(\taub^2+\mub^2)^{-1} \taub,
$$
and let
$S\in\Psiscbr^{-1,-1,0,0,0}(X)=\Psibr^{-1,0,0,0}(X)$ with principal
symbol $\hat s$. Then for real $\sigma$ the principal symbol of
\begin{equation*}\begin{aligned}
&i(\hat P(\sigma)^*A-A\hat P(\sigma))+AS\hat P(\sigma)+\hat
P(\sigma)^*SA\\
&=(\im \hat P(\sigma) A+A\im \hat P(\sigma))+i[\re \hat
P(\sigma),A]+AS\hat P(\sigma)+\hat P(\sigma)^*SA\\
&\qquad\qquad\qquad\in\Psiscbr^{2\tilde r,2(\tilde r+l)-1,2l,-2\tilde\nu+2l-1,-\infty}(X)
\end{aligned}\end{equation*}
at $x=0$
is, suppressing the factor $\phi(x/\sigma)$ as
well as terms with its derivatives,
\begin{equation}\begin{aligned}\label{eq:modified-commutator-real-princ}
&\sigma^{2\tilde\nu+1} x^{-2l}(\taub^2+\mub^2)^{\tilde
  r-3/2}4\Big(-\Big(l+\tilde
r-\frac{\im(\beta+\gamma)}{2}\Big) \taub^2\\
&\qquad\qquad\qquad\qquad\qquad\qquad\qquad\qquad+\Big(l+1/2-\frac{\im(\beta-\gamma)}{2}\Big)\mub^2\Big).
\end{aligned}\end{equation}
\end{lemma}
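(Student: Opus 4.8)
The plan is to reduce the claim to the preceding lemma, which already computes the principal symbol \eqref{eq:commutator-skew-b-version} of $i(\hat P(\sigma)^*A-A\hat P(\sigma))=(\im\hat P(\sigma)A+A\im\hat P(\sigma))+i[\re\hat P(\sigma),A]$ for real $\sigma$, and then to add in the contribution of the two extra terms $AS\hat P(\sigma)+\hat P(\sigma)^*SA$. The membership in $\Psiscbr^{2\tilde r,2(\tilde r+l)-1,2l,-2\tilde\nu+2l-1,-\infty}(X)$ was already recorded after \eqref{eq:comm-S-expr}; for real $\sigma$ one has $\im\hat P(\sigma)\in\Psiscbr^{1,-1,-1,-2,0}(X)$ by Proposition~\ref{prop:full-hat-P-sigma}, which is one order lower in sc-decay than the self-adjoint part, so the leading symbol of $AS\hat P(\sigma)+\hat P(\sigma)^*SA$ in the relevant orders is $2\,\re\hat p(\sigma)\,\hat s\,a$ with $\re\hat p(\sigma)=x^2(\taub^2+\mub^2)-2x\sigma\taub$.

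First I would substitute the explicit $a=x^{-2l-1}\sigma^{2\tilde\nu}(\taub^2+\mub^2)^{\tilde r-1/2}\phi(x/\sigma)$ and $\hat s=2(l+\tilde r-\frac{\im\beta}{2})(\taub^2+\mub^2)^{-1}\taub$ into $2\,\re\hat p(\sigma)\,\hat s\,a$. This produces exactly two monomials: a term of order $x^{-2l+1}$ proportional to $\taub(\taub^2+\mub^2)^{\tilde r-1/2}$, and a term of order $x^{-2l}$ proportional to $\sigma\taub^2(\taub^2+\mub^2)^{\tilde r-3/2}$, both carrying the common coefficient $4(l+\tilde r-\frac{\im\beta}{2})\sigma^{2\tilde\nu}\phi(x/\sigma)$.

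The key point — and the reason for this particular choice of $\hat s$ — is that the first of these monomials cancels the indefinite, $\taub$-linear term $-4x(l+\tilde r-\frac{\im\beta}{2})\taub(\taub^2+\mub^2)$ in the outer parentheses of \eqref{eq:commutator-skew-b-version}; at $x=0$ that term is subleading in the b-decay order anyway, but removing it is what yields a clean principal-symbol identity there (and, as noted after \eqref{eq:comm-S-expr}, simultaneously kills the sc-differential principal symbol of the commutator at $x=0$). After this cancellation the surviving contribution at the leading b-decay order $x^{-2l}$ is the $\taub^2,\mub^2$ part $4\sigma((l+\tilde r-\frac{\im(\beta-\gamma)}{2})\taub^2+(l+1/2-\frac{\im(\beta-\gamma)}{2})\mub^2)$ coming from \eqref{eq:commutator-skew-b-version}, together with the second monomial above, $-8(l+\tilde r-\frac{\im\beta}{2})\sigma\taub^2(\taub^2+\mub^2)^{\tilde r-3/2}$, all multiplied by $x^{-2l}\sigma^{2\tilde\nu}(\taub^2+\mub^2)^{\tilde r-3/2}$.

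Finally I would collect the coefficients of $\taub^2$ and $\mub^2$. The $\mub^2$ coefficient is untouched, while for $\taub^2$ one gets $4(l+\tilde r-\frac{\im(\beta-\gamma)}{2})-8(l+\tilde r-\frac{\im\beta}{2})=-4(l+\tilde r-\frac{\im(\beta+\gamma)}{2})$, using $\im\beta-\frac12\im(\beta-\gamma)=\frac12\im(\beta+\gamma)$; this gives \eqref{eq:modified-commutator-real-princ}. The only step requiring genuine care is the order bookkeeping: one must be sure that at $x=0$ only the scaling $x^{-2l}\sigma^{2\tilde\nu+1}(\taub^2+\mub^2)^{\tilde r-3/2}$ survives, that the $x^{-2l+1}$ contributions (subleading in b-decay) and the subprincipal and skew-adjoint corrections to $AS\hat P(\sigma)+\hat P(\sigma)^*SA$ do not enter at that order, and that $\sigma$ may be treated as a commuting parameter throughout; granting this, the remainder is a short algebraic simplification.
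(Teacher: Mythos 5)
Your proposal is correct and follows essentially the same route as the paper: add the symbol $2\re\hat p(\sigma)\,\hat s\,a$ of $AS\hat P(\sigma)+\hat P(\sigma)^*SA$ to \eqref{eq:commutator-skew-b-version}, note that the $x^{-2l+1}\taub(\taub^2+\mub^2)^{\tilde r-1/2}$ piece cancels the indefinite $\taub$-linear term, and recombine the $\taub^2$ coefficients via $4(l+\tilde r-\frac{\im(\beta-\gamma)}{2})-8(l+\tilde r-\frac{\im\beta}{2})=-4(l+\tilde r-\frac{\im(\beta+\gamma)}{2})$. The only minor slip is calling the coefficient of the two monomials of $2\re\hat p\,\hat s\,a$ ``common'' (the second carries the extra factor $-2$ from $-2\sigma x\taub$), but you use the correct $-8$ in the final collection, so the argument stands.
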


\begin{rem}
See \cite[Remark~4.6]{Vasy:Limiting-absorption-lag} for a discussion
of this choice of $\hat s$, including both the advantages and the
disadvantages, in the context of the more general operators considered
in \cite{Vasy:Limiting-absorption-lag}.

Moreover, the analogue of the conclusion remains valid with a regularizer as in
Remark~\ref{rem:regularizer-choice}, i.e.\ $a$ replaced by
$a^{(\ep)}$, and correspondingly $A$ by $A^{(\ep)}$, provided in the definition of $\hat s$ as well as in
the conclusion, $\tilde r$ is
replaced by \eqref{eq:tilde-r-replace} (except in the exponent), and in the conclusion an
overall factor of $f_\ep(\taub^2+\mub^2)$ is added.
\end{rem}

\begin{proof}
We add to $\sigma^{2\tilde\nu}$ times
\eqref{eq:commutator-b-version-1} the product of
\begin{equation*}\begin{aligned}
&2\hat s a=4\Big(l+\tilde r-\frac{\im\beta}{2} \Big)
(\taub^2+\mub^2)^{-1} \taub a\\
&=4 \sigma^{2\tilde\nu} x^{-2l-1}(\taub^2+\mub^2)^{\tilde
  r-3/2} \Big(l+\tilde r-\frac{\im\beta}{2} \Big)\taub\phi(x/\sigma)
\end{aligned}\end{equation*}
and the principal
symbol of $\re\hat P(\sigma)$, namely
$$
x^2(\taub^2+\mub^2)-2\sigma x\taub.
$$
We obtain, dropping the factor $\phi$,
\begin{equation*}\begin{aligned}
&\sigma^{2\tilde\nu} x^{-2l}(\taub^2+\mub^2)^{\tilde
  r-3/2}\Big(4\sigma\Big(\Big(l+\tilde r-\frac{\im(\beta-\gamma)}{2}\Big)
\taub^2\\
&\qquad\qquad\qquad\qquad\qquad\qquad\qquad\qquad+\Big(l+1/2-\frac{\im(\beta-\gamma)}{2}\Big)\mub^2\Big)\\
&\qquad\qquad\qquad\qquad\qquad\qquad-8\Big(l+\tilde r-\frac{\im\beta}{2}
\Big)\sigma\taub^2\Big)\\
=&\sigma^{2\tilde\nu+1} x^{-2l}(\taub^2+\mub^2)^{\tilde
  r-3/2}4\Big(-\Big(l+\tilde
r-\frac{\im(\beta+\gamma)}{2}\Big) \taub^2\\
&\qquad\qquad\qquad\qquad\qquad\qquad\qquad\qquad+\Big(l+1/2-\frac{\im(\beta-\gamma)}{2}\Big)\mub^2\Big),
\end{aligned}\end{equation*}
proving the lemma.
\end{proof}

For $l,\tilde r$ with $l+\tilde r-\frac{\im(\beta+\gamma)}{2}>0$,
$l+1/2-\frac{\im(\beta-\gamma)}{2}<0$, or with both terms having the opposite sign, we thus obtain
a positive commutator estimate.

\begin{prop}\label{prop:symbolic-est-real}
We have
\begin{equation*}\begin{aligned}
&\|(1+x/\sigma)^{\alpha}u\|_{\Hscbr^{s,\tilde r+l-1/2,l}}\\
&\leq C(\|(1+x/\sigma)^{\alpha}(x+\sigma)^{-1}\hat
P(\sigma)u\|_{\Hscbr^{s-2,\tilde r+l+1/2,l+1}}+\|(1+x/\sigma)^{\alpha}u\|_{\Hscbr^{-N,-N,l}}),
\end{aligned}\end{equation*}
provided that $l+\tilde r-\frac{\im(\beta+\gamma)}{2}>0$,
$l+1/2-\frac{\im(\beta-\gamma)}{2}<0$, or vice versa.

The estimate is valid in the sense that if $(1+x/\sigma)^{\alpha}u\in
\Hscbr^{s',\tilde r'+l-1/2,l}$ for some $s',\tilde r'$ with $\tilde
r'$ satisfying the inequality in place of $\tilde r$, and with $(1+x/\sigma)^{\alpha}(x+\sigma)^{-1}\hat
P(\sigma)u \in\Hscbr^{s-2,\tilde r+l+1/2,l+1}$ then $(1+x/\sigma)^{\alpha}u\in
\Hscbr^{s,\tilde r+l-1/2,l}$ and the estimate holds.
\end{prop}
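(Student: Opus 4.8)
The plan is to follow the limiting absorption argument of \cite{Vasy:Limiting-absorption-lag} in the resolved calculus, the point being that all constants must be uniform as $\sigma\to 0$. Away from the zero section --- which is the outgoing radial set in the conjugated picture and sits at $X=x/\sigma=0$, $\taub=\mub=0$, i.e.\ $\tausc=\musc=0$ --- the characteristic set of $\hat P(\sigma)$ near $X=0$ is the sphere $\tausc^2+\musc^2=2\tausc$, which is $\sigma$-uniform in the resolved coordinates, with the incoming radial point at $\tausc=2$, $\musc=0$. There, $u$ at the orders in the statement is controlled by $\hat P(\sigma)u$ and a lower sc-differential and sc-decay order error at the same b-decay order $l$: ellipticity off the sphere and near fiber infinity is \eqref{eq:br-fiber-infty}--\eqref{eq:scb-fiber-infty}, while real principal type propagation along the sphere and the radial point estimate at the incoming radial set are exactly as in \cite{Vasy:Limiting-absorption-lag}, now applied uniformly in $\sigma$; all of these contributions are absorbed into the term $\|(1+x/\sigma)^\alpha u\|_{\Hscbr^{-N,-N,l}}$ on the right. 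So the only genuinely new ingredient is the estimate at the zero section.

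For that I would run a positive commutator argument with commutant $A=A^{(\ep)}\in\Psiscbr^{2\tilde r-1,2(\tilde r+l),2l+1,-2\tilde\nu+2l+1,-\infty}(X)$ (cf.\ \eqref{eq:symb-A-class}) having principal symbol $a^{(\ep)}=af_\ep(\taub^2+\mub^2)$, where $a$ is a positive multiple of $x^{-2l-1}\sigma^{2\tilde\nu}(\taub^2+\mub^2)^{\tilde r-1/2}(1+x/\sigma)^{2\alpha}\phi(x/\sigma)$ with $\phi\geq 0$ supported near $0$ and $\equiv 1$ nearby, $\nu=-\tilde\nu+l-1/2$, and $f_\ep$ the regularizer of Remark~\ref{rem:regularizer-choice}; on $\supp\phi$ the factor $(1+x/\sigma)^{2\alpha}$ is bounded above and below, so it affects only constants, which is why no restriction on $\alpha$ appears. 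With $S$ as in Lemma~\ref{lemma:modified-commutator-real-princ}, pair the operator in \eqref{eq:comm-S-expr} (with $A=A^{(\ep)}$) against $u$ in $L^2$. By Lemma~\ref{lemma:modified-commutator-real-princ}, with the overall factor $f_\ep(\taub^2+\mub^2)$ inserted and $\tilde r$ replaced as in \eqref{eq:tilde-r-replace}, under the hypothesis --- say $l+\tilde r-\frac{\im(\beta+\gamma)}{2}>0$, $l+1/2-\frac{\im(\beta-\gamma)}{2}<0$ --- the principal symbol at $x=0$ is a definite (here negative) multiple of $\sigma^{2\tilde\nu+1}x^{-2l}(\taub^2+\mub^2)^{\tilde r-3/2}f_\ep(\taub^2+\mub^2)$ times a positive definite quadratic form in $(\taub,\mub)$, modulo terms carrying a derivative of $\phi$, which are supported in the elliptic region and handled by \eqref{eq:scb-fiber-infty}. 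A square-root/sharp positivity construction near the zero section then makes the $L^2$ pairing bound from below a square of the weighted $\Hscbr^{s,\tilde r+l-1/2,l}$ norm of $u$, while the other side of \eqref{eq:comm-S-expr} is the pairing of $\hat P(\sigma)u$ against the $A$- and $SA$-type terms; writing $\hat P(\sigma)=(x+\sigma)G$ with $G\in\Psiscbr^{2,0,-1,-1,0}(X)$, Cauchy--Schwarz together with \eqref{eq:Hscbr-bound} bounds this by $\|(1+x/\sigma)^\alpha(x+\sigma)^{-1}\hat P(\sigma)u\|_{\Hscbr^{s-2,\tilde r+l+1/2,l+1}}$ times the weighted $\Hscbr^{s,\tilde r+l-1/2,l}$ norm of $u$, the orders and weights matching as in \cite{Vasy:Limiting-absorption-lag}, with $s$ a free parameter supplied by the sc-differential ellipticity of $\hat P(\sigma)$ near the radial set, cf.\ \eqref{eq:scr-fiber-infty}--\eqref{eq:scb-fiber-infty}. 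Absorbing this term, the lower order errors and the elliptic errors, and combining with the off-zero-section estimates, gives the stated inequality.

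Two things need care, and the second is the genuine obstacle. First, the actual $\hat P(\sigma)$ differs from its model at $x=0$ by terms in $x(x+\sigma)(x^\delta+\sigma)S^0\Diffb^2(X)$ and by the $S^{-\delta}$ and $x\CI$ corrections to $\beta,\gamma$; these are strictly lower order in the sc-decay and/or front-face senses at the given sc-differential and b-decay orders --- the factor $x^\delta+\sigma$ being absorbed using the smallness of $\sigma$, as in the proof of Proposition~\ref{prop:b-res-imp-est} --- so they pass to the right side or into the error term. Second, and this is the main difficulty, the regularizability at the zero section is limited: for $\ep>0$ the commutant $A^{(\ep)}$ has its sc-decay order lowered, so the $L^2$ pairing above is a priori finite precisely under the stated hypothesis that $(1+x/\sigma)^\alpha u\in\Hscbr^{s',\tilde r'+l-1/2,l}$ with $\tilde r'$ still satisfying the sign inequality, and one must verify that the replacement $\tilde r\mapsto\tilde r-\frac K2\frac{\ep(\taub^2+\mub^2)}{1+\ep(\taub^2+\mub^2)}$ of \eqref{eq:tilde-r-replace} does not destroy the sign of \eqref{eq:modified-commutator-real-princ} uniformly in $\ep\in(0,1]$. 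In the branch where both coefficients of \eqref{eq:modified-commutator-real-princ} are negative (high $\tilde r$, low $l$) this forces $K<2\big(l+\tilde r-\frac{\im(\beta+\gamma)}{2}\big)$, still compatible with a priori regularity at $\tilde r'$ in the admissible range (and with a finite iteration in $\tilde r$ if needed); in the opposite branch the shift only strengthens the sign, so $K$ is unconstrained. Letting $\ep\to 0$ and using the $\ep$-uniform bound yields the estimate at order $\tilde r$. Throughout, the bookkeeping of the weights $\sigma^{2\tilde\nu}$, $(x+\sigma)^{2\tilde\nu}$ and $(1+x/\sigma)^{2\alpha}$ --- which agree up to factors that are bounded and bounded below, together with powers of $\sigma$ that cancel between the two sides of \eqref{eq:comm-S-expr} --- must be tracked carefully so that $C$ is independent of $\sigma\in(0,\sigma_0]$; this, rather than any individual symbol computation, is the delicate part.
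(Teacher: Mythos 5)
Your positive commutator argument near the zero section is, in all essentials, the paper's own proof: the commutant is the paper's $a$ (the extra factor $(1+x/\sigma)^{2\alpha}$ is indeed harmless on $\supp\phi$), the modification by $S$ is Lemma~\ref{lemma:modified-commutator-real-princ}, the quadratic-form/Cauchy--Schwarz absorption is the same, and your regularizer constraint $K<2\big(l+\tilde r-\frac{\im(\beta+\gamma)}{2}\big)$ with $K=2(\tilde r-\tilde r')$ is exactly the hypothesis that $\tilde r'$ satisfies the same sign condition as $\tilde r$, so it matches the paper's choice (and no iteration in $\tilde r$ is needed).

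The gap is in your opening step. You claim that away from the zero section the estimates follow from real principal type propagation along the sphere $\tausc^2+\musc^2=2\tausc$ and the incoming radial point estimate ``exactly as in \cite{Vasy:Limiting-absorption-lag}, now applied uniformly in $\sigma$''. That uniformity is not available from \cite{Vasy:Limiting-absorption-lag}: the constants there are for fixed nonzero $\sigma$, and uniformity down to $\sigma=0$ at these locations is precisely what this proposition has to produce, so as stated the step is circular. It is also unnecessary, and seeing why is the actual point of the proof: your commutant is not localized in the fiber variables, so under the stated sign conditions the form \eqref{eq:modified-commutator-real-princ} is definite in $(\taub,\mub)$ on the whole fiber over $\{x/\sigma\ \text{small}\}$, and the single commutator therefore controls the characteristic sphere and the incoming radial set simultaneously with the zero section; the two conditions $l+\tilde r-\frac{\im(\beta+\gamma)}{2}\gtrless 0$ and $l+\frac12-\frac{\im(\beta-\gamma)}{2}\lessgtr 0$ are exactly the two radial-point thresholds handled at once (cf.\ Remark~\ref{rem:regularizer-choice} on the radial sets colliding as $\sigma\to 0$). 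Away from the lift of $x=0$ only the resolved elliptic estimate \eqref{eq:br-fiber-infty} is needed, because the error term $\|(1+x/\sigma)^{\alpha}u\|_{\Hscbr^{-N,-N,l}}$ carries the same b-decay order $l$ and the same weight, so at finite frequencies there nothing has to be gained; the sc-decay order mismatch only matters over $x/\sigma=0$, which your commutator covers. So the statement ``the only genuinely new ingredient is the estimate at the zero section'' should be deleted or corrected.

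Two smaller points. First, the remainder in the decomposition $\pm B^*B+F$ is not ``strictly lower order'': since the symbol computation is done at $x=0$, $F$ is one order lower in sc-decay but \emph{higher} order in sc-differentiability, and it is disposed of by the sc-fiber-infinity elliptic estimate \eqref{eq:scb-fiber-infty}; the $\phi'$ terms, by contrast, live where $x/\sigma$ is bounded away from $0$ and are handled by \eqref{eq:br-fiber-infty}. Second, the comparison of $\hat P(\sigma)$ with its model modulo $x(x+\sigma)(x^\delta+\sigma)S^0\Diffb^2(X)$, and the smallness-of-$\sigma$ absorption, belong to the normal operator step (Proposition~\ref{prop:b-res-imp-est}), not to this symbolic estimate, where only the structure \eqref{eq:full-hat-P-sigma-separated} entering Lemma~\ref{lemma:modified-commutator-real-princ} is used; relatedly, even in the a priori regular case one needs a (benign, unrestricted) preliminary regularization to make sense of the pairing of $\hat P(\sigma)^*Au$ with $u$, which your write-up passes over.
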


\begin{proof}
At first we discuss the argument for sufficiently regular
$u$. Concretely, $u$ with $(1+x/\sigma)^{\alpha}u\in
\Hscbr^{s,\tilde r+l-1/2,l}$ suffices (so the left hand side is a
priori finite). Indeed, even in this case there is one subtlety, for
at first sight half an order additional regularity is needed, see the
proof of Proposition~4.10~\cite{Vasy:Limiting-absorption-lag}, as well
as the references given there, \cite[Proof of
Proposition~5.26]{Vasy:Minicourse} and
\cite[Lemma~3.4]{Haber-Vasy:Radial}, to make sense of $\hat
P(\sigma)^*A$ applied to $u$ and paired with $u$, but this is easily
overcome by a simple regularization argument (which does {\em not}
have limitations unlike the more serious regularization discussed below) given in the references.

Recalling that
$$
A
\in\Psibr^{2\tilde
  r-1,2l+1,-2\tilde\nu+2l+1,-\infty}(X)=\Psiscbr^{2\tilde r-1,2(\tilde
  r+l),2l+1,-2\tilde\nu+2l+1,-\infty}(X),
$$
where we take $\tilde\nu=\alpha-1/2$,
Lemma~\ref{lemma:modified-commutator-real-princ}
gives, with $S\in\Psiscbr^{-1,-1,0,0,0}(X)=\Psibr^{-1,0,0,0}(X)$ with principal symbol $\hat s$,
that
\begin{equation}\begin{aligned}\label{eq:total-twisted-comm-S}
&i(\hat P(\sigma)^*A-A\hat P(\sigma))+AS\hat P(\sigma)+\hat
P(\sigma)^*SA\\
&=(\im \hat P(\sigma) A+A\im \hat P(\sigma))+i[\re \hat
P(\sigma),A]+AS\hat P(\sigma)+\hat P(\sigma)^*SA=\pm B^*B+F
\end{aligned}\end{equation}
in $\Psiscbr^{2\tilde r,2(\tilde
  r+l)-1,2l,-2\tilde\nu+2l-1,-\infty}(X)$
with
\begin{equation*}\begin{aligned}
&B\in\Psiscbr^{\tilde r-1/2,\tilde
  r+l-1/2,l,l-\tilde\nu-1/2,-\infty}(X)=\Psibr^{\tilde
  r-1/2,l,l-\tilde\nu-1/2,-\infty}(X),\\
& F\in\Psiscbr^{2\tilde r, 2\tilde r+2l-2,2l,2l-2\tilde\nu-1,-\infty}(X),
\end{aligned}\end{equation*}
and with the principal symbol of $B$ given by, up to a factor
involving $\phi$, by the square root of \eqref{eq:modified-commutator-real-princ}.
Note that $F$ only drops an order in the sc-decay relative to
\eqref{eq:total-twisted-comm-S}, but is actually higher order in the
sc-differentiability sense as the symbolic computation was performed
at $x=0$ and as the principal symbol in the sc-decay sense vanishes
there but not nearby. However, we already have elliptic estimates at sc-fiber
infinity, so this is of no relevance; by virtue of \eqref{eq:scb-fiber-infty} we have
\begin{equation*}\begin{aligned}
|\langle Fu,u\rangle|&\leq C'\|\sigma^{\tilde
  \nu+1/2}u\|^2_{\Hscbr^{\tilde r,\tilde r+l-1,l}}\\
&\leq C(|\sigma|^{-2}\|\sigma^{\tilde
  \nu+1/2}\hat P(\sigma)u\|^2_{\Hscbr^{\tilde r-2,\tilde r+l-1,l}}+\|\sigma^{\tilde
  \nu+1/2}u\|^2_{\Hscbr^{-N,\tilde r+l-1,l}})\\
&\leq C(|\sigma|^{-2}\|\sigma^{\tilde
  \nu+1/2}\hat P(\sigma)u\|^2_{\Hb^{\tilde r-1,l}}+\|\sigma^{\tilde
  \nu+1/2}u\|^2_{\Hscbr^{-N,\tilde r+l-1,l}}).
\end{aligned}\end{equation*}
Thus, computing \eqref{eq:total-twisted-comm-S} applied to $u$ and
paired with $u$ yields
\begin{equation*}\begin{aligned}
&\pm\|Bu\|^2+\langle Fu,u\rangle=2\im\langle \hat
P(\sigma)u,Au\rangle+2\re\langle \hat P(\sigma)u,SAu\rangle,
\end{aligned}\end{equation*}
so
$$
\|\sigma^{\tilde\nu+1/2}B_1 u\|_{\Hb^{\tilde r-1/2,l}}^2\leq C(\|\sigma^{\tilde\nu-1/2} B_3 \hat
P(\sigma)u\|_{\Hb^{\tilde r-1/2,l+1}}^2+\|\sigma^{\tilde\nu+1/2}u\|_{\Hb^{-N,l}}^2)
$$
with $B_1,B_3\in\Psibr^{0,0,0,0}(X)=\Psiscbr^{0,0,0,0,0}(X)$
microsupported near $x/\sigma=0$. In fact, one can even use a cutoff
with differential
supported near sc-fiber infinity, i.e.\ localizing to a compact region
in $(\tausc,\musc)$; this again gives an error term we can already
estimate by elliptic estimates.

Thus, combined with the existing elliptic estimates, this proves the
proposition in the weaker sense of a priori having a sufficiently
regular $u$.

In order to obtain the full result, we need to regularize, replacing
$A$ by $A^{(\ep)}$. The main impact of this is that in
\eqref{eq:modified-commutator-real-princ} $\tilde r$ is replaced by
\eqref{eq:tilde-r-replace} (except in the exponent) and an overall factor of
$f_\ep(\taub^2+\mub^2)$ is added, so that
\eqref{eq:modified-commutator-real-princ} is replaced by
\begin{equation}\begin{aligned}\label{eq:modified-commutator-real-princ-reg}
&\sigma^{2\tilde\nu+1} x^{-2l}(\taub^2+\mub^2)^{\tilde
  r-3/2} 4\Big(-\Big(l+\tilde
r-\frac{\im(\beta+\gamma)}{2}-\frac{K}{2}\frac{\ep(\taub^2+\mub^2)}{1+\ep(\taub^2+\mub^2)}\Big) \taub^2\\
&\qquad\qquad\qquad\qquad\qquad\qquad\qquad\qquad+\Big(l+1/2-\frac{\im(\beta-\gamma)}{2}\Big)\mub^2\Big)
f_\ep(\taub^2+\mub^2).
\end{aligned}\end{equation}
Here we need $K=2(\tilde r-\tilde r')$: the regularized operator
improves $K$ b-differentiability, thus sc-decay and
sc-differentiability, orders, but in the quadratic form both slots
have a $u$ which
needs improved regularity. Since $0\leq
\frac{\ep(\taub^2+\mub^2)}{1+\ep(\taub^2+\mub^2)}\leq 1$, this still
gives the desired definite sign, and the rest of the argument can
proceed essentially unchanged. We refer to
\cite[Section~5.4.7]{Vasy:Minicourse}, \cite[Proof of
Proposition~2.3]{Vasy-Dyatlov:Microlocal-Kerr},
as well as earlier work going
back to \cite{RBMSpec} and including
\cite[Theorem~1.4]{Haber-Vasy:Radial} for the concrete implementation. 
\end{proof}

We now turn to the case of not necessarily real $\sigma$.  {\em We
  remark that the regularization issues and the ways of dealing with them are completely analogous to
  the real $\sigma$ case, and we will not comment on these
  explicitly.}

Near the scattering zero section the term $-2\sigma\tau$
is the most important part of the principal symbol since the other
terms vanish quadratically at the zero section, so it is useful to
consider
$$
\tilde P(\sigma)=\sigma^{-1}\hat P(\sigma) \in\Psiscbr^{2,0,-1,-1,1}(X)
$$
so the principal symbol is
$$
\tilde p(\sigma)=\sigma^{-1}\hat p(\sigma)=-2\tau+\overline{\sigma}|\sigma|^{-2}(\tau^2+\mu^2),
$$
hence
$$
\re\tilde p(\sigma)=-2\tau+(\re\sigma)|\sigma|^{-2}(\tau^2+\mu^2),
$$
$$
\im\tilde p(\sigma)=-(\im\sigma)|\sigma|^{-2}(\tau^2+\mu^2).
$$
Thus, $\im\tilde p(\sigma)\leq 0$ if $\im\sigma\geq 0$, which means
one can propagate estimates forwards along the Hamilton flow of
$\re\tilde p(\sigma)$; similarly, if $\im\sigma\leq 0$, one can
propagate estimates backwards along the Hamilton flow of $\re\tilde
p(\sigma)$. We have

\begin{lemma}
We have
\begin{equation}\begin{aligned}\label{eq:commutator-sc-version-imag}
&H_{\re\tilde p(\sigma)}x^{-2l-1}(\taub^2+\mub^2)^{\tilde
  r-1/2}\\
&=x^{-2l}(\taub^2+\mub^2)^{\tilde
  r-3/2}\Big(4\big((l+\tilde r)
\taub^2+(l+1/2)\mub^2\big)-4\frac{\re\sigma}{|\sigma|^2} x(l+\tilde
r)\taub(\taub^2+\mub^2)\Big)\\
&=x^{-2(l+\tilde r)+1}(\tau^2+\mu^2)^{\tilde
  r-3/2}\Big(4\big((l+\tilde r) \tau^2+(l+1/2)\mu^2\big)-4\frac{\re\sigma}{|\sigma|^2} (l+\tilde r)\tau(\tau^2+\mu^2)\Big).
\end{aligned}\end{equation}
\end{lemma}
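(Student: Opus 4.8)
The plan is to compute $H_{\re\tilde p(\sigma)}$ directly on the b-cotangent bundle, organizing the calculation so that it reuses the Poisson brackets already evaluated in the real-$\sigma$ setting. Writing $\tau=x\taub$, $\mu=x\mub$ for the scattering covariables, recall that the principal symbol of $\hat P(\sigma)$ in the joint sc-differential--sc-decay sense is $\hat p(\sigma)=\tau^2+\mu^2-2\sigma\tau=x^2(\taub^2+\mub^2)-2x\sigma\taub$, so that
\[
\re\tilde p(\sigma)=-2\tau+\frac{\re\sigma}{|\sigma|^2}(\tau^2+\mu^2)=-2x\taub+\frac{\re\sigma}{|\sigma|^2}\,x^2(\taub^2+\mub^2).
\]
Since $\sigma$ is a parameter, the Hamilton vector field splits linearly,
\[
H_{\re\tilde p(\sigma)}=H_{-2x\taub}+\frac{\re\sigma}{|\sigma|^2}\,H_{x^2(\taub^2+\mub^2)},
\]
and I would apply each piece to $a=x^{-2l-1}(\taub^2+\mub^2)^{\tilde r-1/2}$ separately.

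For the second bracket, $\{x^2(\taub^2+\mub^2),a\}$ is precisely the $\sigma=0$ specialization of the computation in \eqref{eq:commutator-b-version-1}--\eqref{eq:commutator-b-version-2}, namely $x^{-2l}(\taub^2+\mub^2)^{\tilde r-3/2}\big(-4x(l+\tilde r)\taub(\taub^2+\mub^2)\big)$; multiplying by $\re\sigma/|\sigma|^2$ produces the second summand in the middle line of \eqref{eq:commutator-sc-version-imag}. For the first bracket, $\{-2x\taub,a\}$ is $\sigma^{-1}$ times the contribution of the summand $-2x\sigma\taub$ to the bracket in \eqref{eq:commutator-b-version-1}, i.e.\ $\sigma^{-1}$ times the ``$4\sigma(\cdots)$'' terms in \eqref{eq:commutator-b-version-2}; expanding the b-Poisson bracket $\{f,g\}=(\partial_{\taub}f)(x\partial_x g)-(x\partial_x f)(\partial_{\taub}g)+\sum_j\big((\partial_{\mub_j}f)(\partial_{y_j}g)-(\partial_{y_j}f)(\partial_{\mub_j}g)\big)$ directly one finds $4x^{-2l}(\taub^2+\mub^2)^{\tilde r-3/2}\big((l+\tilde r)\taub^2+(l+1/2)\mub^2\big)$. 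Adding the two contributions gives the middle line of \eqref{eq:commutator-sc-version-imag}.

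Finally, I would obtain the last line of \eqref{eq:commutator-sc-version-imag} by rewriting the middle line in scattering covariables via $\taub=\tau/x$, $\mub=\mu/x$: this turns $(\taub^2+\mub^2)^{\tilde r-3/2}$ into $x^{-2\tilde r+3}(\tau^2+\mu^2)^{\tilde r-3/2}$ and supplies an extra factor $x^{-2}$ from the residual quadratic-in-covariables bracket, so the prefactor $x^{-2l}$ becomes $x^{-2(l+\tilde r)+1}$, as stated. There is no genuine analytic obstacle here: both $\re\tilde p(\sigma)$ and $a$ are explicit polyhomogeneous symbols, and the only thing that needs care is the bookkeeping---using the b-symplectic form consistently (equivalently, the sign convention above, which is the one implicit in \eqref{eq:commutator-b-version-1}), exploiting that $f$ and $a$ have no $y$-dependence at leading order so the curvature terms drop, and tracking the powers of $x$ through the covariable change---so the computation is structurally identical to the real-$\sigma$ one, with the building blocks $-2x\taub$ and $x^2(\taub^2+\mub^2)$ now weighted by $1$ and $\re\sigma/|\sigma|^2$ in place of the $\sigma$ and $1$ appearing in \eqref{eq:commutator-b-version-1}.
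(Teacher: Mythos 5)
Your computation is correct and is essentially the paper's own proof: the paper likewise evaluates the b-Poisson bracket $\{\tfrac{\re\sigma}{|\sigma|^2}x^2(\taub^2+\mub^2)-2x\taub,\,x^{-2l-1}(\taub^2+\mub^2)^{\tilde r-1/2}\}$ term by term (with the same sign convention as in \eqref{eq:commutator-b-version-1}) and then substitutes $\tau=x\taub$, $\mu=x\mub$. Your splitting by linearity and reuse of \eqref{eq:commutator-b-version-1}--\eqref{eq:commutator-b-version-2} with the weights $1$ and $\re\sigma/|\sigma|^2$ is only a reorganization of that same calculation, and your bookkeeping of the $x$-powers in the final substitution checks out.
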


\begin{proof}
We compute
\begin{equation*}\begin{aligned}
&\Big\{\frac{\re\sigma}{|\sigma|^2} x^2(\taub^2+\mub^2)-2x\taub,x^{-2l-1}(\taub^2+\mub^2)^{\tilde
  r-1/2}\Big\}\\
&=(2\frac{\re\sigma}{|\sigma|^2} x^2\taub-2x)(-2l-1) x^{-2l-1} (\taub^2+\mub^2)^{\tilde
  r-1/2}\\
&\qquad-(2\frac{\re\sigma}{|\sigma|^2} x^2(\taub^2+\mub^2) -2x\taub) x^{-2l-1}2(\tilde r-1/2)\taub (\taub^2+\mub^2)^{\tilde
  r-3/2}.
\end{aligned}\end{equation*}
Expanding and rearranging,
\begin{equation*}\begin{aligned}
&=4(l+1/2) x^{-2l}(\taub^2+\mub^2)^{\tilde
  r-1/2}\\
&\qquad+4(\tilde r-1/2)x^{-2l}\taub^2 (\taub^2+\mub^2)^{\tilde
  r-3/2}\\
&\qquad-4\frac{\re\sigma}{|\sigma|^2} (l+1/2)x^{-2l+1}\taub (\taub^2+\mub^2)^{\tilde
  r-1/2}\\
&\qquad-4\frac{\re\sigma}{|\sigma|^2} (\tilde r-1/2)x^{-2l+1}\taub (\taub^2+\mub^2)^{\tilde
  r-1/2}\\
&=x^{-2l}(\taub^2+\mub^2)^{\tilde
  r-3/2}\Big(4\big((l+1/2) (\taub^2+\mub^2)+(\tilde
r-1/2)\taub^2\big)\\
&\qquad\qquad \qquad\qquad \qquad\qquad \qquad\qquad-4\frac{\re\sigma}{|\sigma|^2} x(l+\tilde r)\taub(\taub^2+\mub^2)\Big)\\
&=x^{-2l}(\taub^2+\mub^2)^{\tilde
  r-3/2}\Big(4\big((l+\tilde r) \taub^2+(l+1/2)\mub^2\big)-4\frac{\re\sigma}{|\sigma|^2} x(l+\tilde r)\taub(\taub^2+\mub^2)\Big).
\end{aligned}\end{equation*}
Writing $\tau=x\taub$, $\mu=x\mub$ proves the lemma.
\end{proof}

\begin{lemma}
Let
$$
\hat s_0=2(l+\tilde r)(\taub^2+\mub^2)^{-1} \taub,
$$
and let
$S_0\in\Psiscbr^{-1,-1,0,0,0}(X)=\Psibr^{-1,0,0,0}(X)$ with principal
symbol $\hat s_0$, and let
$$
A_0\in\Psibr^{2\tilde
  r-1,2l+1,2\nu+2,-\infty}(X)=\Psiscbr^{2\tilde r-1,2(\tilde
  r+l),2l+1,2l+1,-\infty}(X),
$$
have principal symbol
$$
x^{-2l-1}(\taub^2+\mub^2)^{\tilde
  r-1/2}.
$$
Then the principal symbol of
\begin{equation*}\begin{aligned}
&i[\re \hat
P(\sigma),A_0]+A_0S_0\hat P(\sigma)+\hat P(\sigma)^*S_0A_0
\end{aligned}\end{equation*}
in
$
\Psiscbr^{2\tilde r,2(\tilde
  r+l)-1,2l,2l,-\infty}(X)$
at $x=0$ is
\begin{equation}\begin{aligned}\label{eq:commutator-sc-version-imag-8}
&x^{-2l}(\taub^2+\mub^2)^{\tilde
  r-3/2}4\big(-(l+\tilde r) \taub^2+(l+1/2)\mub^2\big).
\end{aligned}\end{equation}
\end{lemma}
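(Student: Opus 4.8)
The plan is to carry out exactly the symbol computation from the proof of Lemma~\ref{lemma:modified-commutator-real-princ}, now in the non-real regime, using as its main input the Hamilton-flow formula \eqref{eq:commutator-sc-version-imag}. As there, the zero section is a set of radial points, so positivity can only come from the weight $a_0=x^{-2l-1}(\taub^2+\mub^2)^{\tilde r-1/2}$, the principal symbol of $A_0$; the operator $S_0$ is inserted purely to rearrange the indefinite, $\taub$-odd, part of the commutator.

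First I would record that the principal symbol of $i[\re\hat P(\sigma),A_0]$ is $H_p a_0$, where $p$ is the principal symbol of the self-adjoint part of $\hat P(\sigma)$; this is computed as in \eqref{eq:commutator-sc-version-imag}, so the commutator contributes $x^{-2l}(\taub^2+\mub^2)^{\tilde r-3/2}\cdot 4\big((l+\tilde r)\taub^2+(l+1/2)\mub^2\big)$ together with a term carrying an extra factor of $x$. Second, since $\hat s_0$ and $a_0$ are real and $\hat P(\sigma)$ is symmetric to leading order, the symmetrizing terms $A_0S_0\hat P(\sigma)+\hat P(\sigma)^*S_0A_0$ contribute, to leading order in the first two (sc-differential and sc-decay) senses, $2\hat s_0 a_0\,p=4(l+\tilde r)x^{-2l-1}(\taub^2+\mub^2)^{\tilde r-3/2}\,\taub\,p$; inserting $p$, this is $-8(l+\tilde r)x^{-2l}\taub^2(\taub^2+\mub^2)^{\tilde r-3/2}$, again up to a term with an extra factor of $x$.

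Adding the two contributions, the point of the choice $\hat s_0=2(l+\tilde r)(\taub^2+\mub^2)^{-1}\taub$ is twofold: the $\taub$-odd pieces — those carrying the extra power of $x$ — cancel exactly (so the ``$x=0$'' part is clean), and in the remaining sc-decay-leading part the coefficient of $\taub^2$ is converted from $l+\tilde r$ to $-(l+\tilde r)$, which is exactly \eqref{eq:commutator-sc-version-imag-8}. I would also note that, unlike in the earlier lemmas, the skew-adjoint term $\im\hat P(\sigma)$ is intentionally absent here: for $\im\sigma\ge 0$ its symbol $\im\tilde p(\sigma)=-(\im\sigma)|\sigma|^{-2}(\tau^2+\mu^2)\le 0$ already carries the sign needed to propagate estimates forward along $H_{\re\tilde p(\sigma)}$, so it is handled by the propagation estimate rather than by the commutant.

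The computation itself is elementary; the closest thing to an obstacle is the bookkeeping of the five orders of $\Psiscbr$ — in particular that near the zero section the self-adjoint part of $\hat P(\sigma)$ drops one sc-decay order, which is what places $i[\re\hat P(\sigma),A_0]$, $A_0S_0\hat P(\sigma)$ and $\hat P(\sigma)^*S_0A_0$ all in $\Psiscbr^{2\tilde r,2(\tilde r+l)-1,2l,2l,-\infty}(X)$ — together with the (routine) check that the regularizer $a_0\mapsto a_0f_\ep$ of Remark~\ref{rem:regularizer-choice} only multiplies the answer by $f_\ep(\taub^2+\mub^2)$ and performs the substitution \eqref{eq:tilde-r-replace} in the coefficients but not the exponents, so that the sign of \eqref{eq:commutator-sc-version-imag-8} survives the $\ep\to0$ limit. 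Both points are identical to the real-$\sigma$ case treated above.
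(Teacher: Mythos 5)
Your computation is correct and is essentially the paper's own proof: the paper likewise adds $2\hat s_0 a_0\,\re\tilde p(\sigma)$ to \eqref{eq:commutator-sc-version-imag} and reads off \eqref{eq:commutator-sc-version-imag-8}, with the exact cancellation of the $x$-weighted, $\taub$-odd terms (which you spell out) left implicit there. The only caveat, shared with the paper's formulation, is that the input formula is really for the rescaled symbol $\re\tilde p(\sigma)=\re(\sigma^{-1}\hat p(\sigma))$ rather than $\re\hat p(\sigma)$ itself; your additional remarks on $\im\hat P(\sigma)$ and on the regularizer are consistent with how the lemma is used later but are not needed for the lemma.
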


\begin{proof}
Adding to \eqref{eq:commutator-sc-version-imag}
$$
4(l+\tilde r)\re\tilde p(\sigma) x^{-2l-1}\taub(\taub^2+\mub^2)^{\tilde
  r-3/2}
$$
we get
\begin{equation}\begin{aligned}\label{eq:commutator-sc-version-imag-8b}
&x^{-2l}(\taub^2+\mub^2)^{\tilde
  r-3/2}\Big(4\big((l+\tilde r) \taub^2+(l+1/2)\mub^2\big)-8\taub^2(l+\tilde
r)\Big)\\
&=x^{-2l}(\taub^2+\mub^2)^{\tilde
  r-3/2}4\big(-(l+\tilde r) \taub^2+(l+1/2)\mub^2\big),
\end{aligned}\end{equation}
proving the lemma.
\end{proof}

Before proceeding, we state a more precise structure result for
$\im\tilde P(\sigma)$ which is crucial as the contribution of
$\im\tilde P(\sigma)$ to the operator we compute is higher order than
the commutator itself. This result states that $\im\tilde P(\sigma)$
has the same sign as $-(\im\sigma)$, in the operator theoretic sense,
modulo terms we can otherwise control or are irrelevant.

\begin{lemma}\label{lemma:im-P-sigma-structure}
We have
\begin{equation*}\begin{aligned}
&\im\tilde P(\sigma)=-(\im\sigma)T(\sigma)+W(\sigma) \in\Psiscbr^{2,0,-1,-1,1}(X),\\
&\qquad
T(\sigma)\in\Psiscbr^{2,0,-1,-1,2}(X),\qquad W(\sigma)\in\Psiscbr^{1,-1,-1,-1,1}(X)
\end{aligned}\end{equation*}
with
$$
T(\sigma)=T=\sum_j
T_j^2+\sum_j T_j T'_j+\sum_j T'_j T_j+T''_j
$$
with $T_j=T_j^*\in\Psiscbr^{1,0,-1,0,1}(X)$ (where $T_j$ is $|\sigma|^{-1}$
times the $T_j$ of \eqref{eq:hat-P-0-nonnegative}),
$T'_j=(T'_j)^*\in\Psiscbr^{0,-1,-1,0,1}(X)$,
$T''_j=(T''_j)^*\in\Psiscbr^{0,-2,-2,0,2}(X)$, $W=W^*$, so $T'_j,W$ are one order lower
than $T$ in terms of sc-decay, $T''_j$ two orders lower, and where
$W(\sigma)$ has principal symbol
$$
\im \Big( x^2\taub\beta\frac{\overline{\sigma}}{|\sigma|^2}-x(\beta-\gamma)+x\sigma\varpi\Big).
$$
\end{lemma}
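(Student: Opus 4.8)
The plan is to start from $\tilde P(\sigma)=\sigma^{-1}\hat P(\sigma)$ and insert the explicit form \eqref{eq:full-hat-P-sigma-separated}, which gives
\[
\tilde P(\sigma)=\sigma^{-1}\hat P(0)+\hat Q+\sigma\hat R-2L,\qquad L=x^2D_x+i\frac{n-1}{2}x+\frac{\hat\beta-\hat\gamma}{2}x.
\]
One then computes $\im\tilde P(\sigma)=\frac1{2i}(\tilde P(\sigma)-\tilde P(\sigma)^*)$ term by term, using the elementary identities $\im(\sigma^{-1}B)=-\tfrac{\im\sigma}{|\sigma|^2}\re B+\tfrac{\re\sigma}{|\sigma|^2}\im B$ and $\im(\sigma B)=(\im\sigma)\re B+(\re\sigma)\im B$ (with $\re B=\tfrac12(B+B^*)$, $\im B=\tfrac1{2i}(B-B^*)$). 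Collecting the terms carrying a factor $\im\sigma$ identifies $T(\sigma)=|\sigma|^{-2}\re\hat P(0)$, and the rest is
\[
W(\sigma)=\im\tilde P(\sigma)+(\im\sigma)T(\sigma)=\tfrac{\re\sigma}{|\sigma|^2}\im\hat P(0)+\im\hat Q+(\re\sigma)\im\hat R+(\im\sigma)\re\hat R-2\im L;
\]
that $W=W^*$ is automatic once $T=T^*$.

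For the $T$-part I would feed in the sum-of-squares structure \eqref{eq:hat-P-0-nonnegative}: writing $\re\hat P(0)=\tfrac12(\hat P(0)+\hat P(0)^*)$ and replacing each $T_j$ there by its self-adjoint part (the symmetrization errors being one b-order lower and absorbable into the cross and remainder terms) presents $\re\hat P(0)$ as $\sum_j \bar T_j^2+\sum_j(\bar T_j\bar T_j'+\bar T_j'\bar T_j)+\bar T''$ with $\bar T_j=\bar T_j^*$ of the b-type of the $T_j$ of \eqref{eq:hat-P-0-nonnegative}. Dividing by $|\sigma|^2$ and placing one factor of $|\sigma|^{-1}$ into each $\bar T_j$ — the bookkeeping move that shifts the front-face and $\sigma/x$ orders by one — produces $T_j=|\sigma|^{-1}\bar T_j\in\Psiscbr^{1,0,-1,0,1}(X)$, $T'_j\in\Psiscbr^{0,-1,-1,0,1}(X)$ and a remainder (including $|\sigma|^{-2}\bar T''$) in $\Psiscbr^{0,-2,-2,0,2}(X)$, i.e.\ $T(\sigma)=T$ of the asserted form; non-negativity of $T$ modulo these lower-order terms is inherited from \eqref{eq:hat-P-0-nonnegative} and from $|\sigma|^{-2}>0$.

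For $W(\sigma)$ I would input the remaining structural hypotheses. By \eqref{eq:Im-P}, $\im\hat P(0)=\im P(0)=\beta_I x\big(x^2D_x+ix\tfrac{n-2}{2}\big)+\beta_I'x^2$ modulo $S^{-2-\delta}\Diffb^1(X)$; by Proposition~\ref{prop:full-hat-P-sigma}, $\im\hat Q\in S^{-1-\delta}(X)+S^{-2-\delta}\Diffb^1(X)$ and $\im\hat R\in S^{-1-\delta}(X)$; and $\im L$ is computed using that $x^2D_x+i\tfrac{n-1}{2}x$ is formally self-adjoint modulo lower order with respect to the scattering $L^2$-density, so $\im L=\tfrac{\im\hat\beta-\im\hat\gamma}{2}x$ modulo lower order. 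A check of the resolved-algebra orders then places every term of $W$ in $\Psiscbr^{1,-1,-1,-1,1}(X)$, one sc-decay order below $\tilde P(\sigma)$, and its principal symbol is obtained by summing the leading symbols of the groups: $\tfrac{\re\sigma}{|\sigma|^2}\im\hat P(0)$ contributes $\im\big(\tfrac{\overline{\sigma}}{|\sigma|^2}x^2\taub\beta\big)$ (since $x(x^2D_x)$ has symbol $x^2\taub$ and $\im\hat\beta|_{\pa X}=\beta_I$), $-2\im L$ contributes $\im(-x(\beta-\gamma))$ (with $\im\hat\gamma|_{\pa X}=\gamma_I$), and $(\im\sigma)\re\hat R$ contributes $\im(x\sigma\varpi)$ (the leading part of $\hat R$ being the real $x\varpi$ with $\varpi$ as in \eqref{eq:actual-normal-op-tilde}), while $\im\hat Q$ and $(\re\sigma)\im\hat R$ are of strictly lower order; this yields the stated principal symbol of $W(\sigma)$.

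The main obstacle is not any individual computation but the bookkeeping in the five-index class $\Psiscbr^{\bullet}$: one must verify that the symmetrization errors, the $x^2\Diffb^1(X)$-part of $\hat P(0)$, and the skew parts of $Q$, $R$ and of $x^2D_x$ all land exactly in the advertised subclasses, which amounts to tracking how each surviving power of $x$ converts into a power of $|\sigma|$ times a power of $X=x/|\sigma|$, and to being careful about which $L^2$-density the adjoints are taken against. There is also a mild freedom in splitting subprincipal terms between $T$ and $W$; one chooses the split so that $T$ keeps the clean $\sigma$-independent sum-of-squares form (all $|\sigma|$-powers hidden inside $T_j,T'_j,T''_j$) and $W$ carries the genuinely lower-order remainder, which is what makes the statement come out as written.
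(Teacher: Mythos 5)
Your proposal is correct and is essentially the paper's own argument: the paper's proof is a one-line citation of \eqref{eq:full-hat-P-sigma-separated}, \eqref{eq:hat-P-0-nonnegative} and \eqref{eq:hat-Q-module-form}, and you simply spell out the bookkeeping (taking imaginary parts of $\sigma^{-1}\hat P(0)+\hat Q+\sigma\hat R-2L$, rescaling the sum-of-squares factors by $|\sigma|^{-1}$, and reading off the symbol of $W$ from \eqref{eq:Im-P}, Proposition~\ref{prop:full-hat-P-sigma} and \eqref{eq:actual-normal-op-tilde}), with the paper's cited module form of $\hat Q$ replaced by the equivalent use of $\im\hat Q\in S^{-1-\delta}(X)+S^{-2-\delta}\Diffb^1(X)$. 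The only imprecision — your literal split $T=|\sigma|^{-2}\re\hat P(0)$ would put the $-\frac{\im\sigma}{|\sigma|^2}(\re\beta)x^2\taub$ piece into $T$ rather than into the stated symbol of $W$ — is harmless and is covered by your own remark on the freedom in distributing subprincipal terms (and vanishes where the lemma is applied, since there $\re\beta=0$).
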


\begin{rem}
While we could factor out $\im\sigma$ from the $\im(x\sigma\varpi)$
term in $W$ by our
assumptions, it is only $O(x)$, so could not be absorbed in $T''$.
\end{rem}

\begin{proof}
This is an immediate consequence of \eqref{eq:full-hat-P-sigma-separated},
\eqref{eq:hat-P-0-nonnegative} and \eqref{eq:hat-Q-module-form}.
\end{proof}

\begin{prop}
Suppose $\beta$ is skew-symmetric.
There exists $\sigma_0>0$ such that for $|\im\sigma|<\sigma_0$
we have
\begin{equation*}\begin{aligned}
&\|(1+x/|\sigma|)^{\alpha}u\|_{\Hscbr^{s,\tilde r+l-1/2,l}}\\
&\leq C(\|(1+x/|\sigma|)^{\alpha}(x+|\sigma|)^{-1}\hat
P(\sigma)u\|_{\Hscbr^{s-2,\tilde r+l+1/2,l+1}}+\|(1+x/|\sigma|)^{\alpha}u\|_{\Hscbr^{-N,-N,l}}),
\end{aligned}\end{equation*}
provided that
\begin{equation}\label{eq:symbolic-est-imag-constr}
0\leq \im\sigma,\qquad l+\tilde
r-\frac{\im(\beta+\gamma)}{2}>0,\qquad
l+1/2-\frac{\im(\beta-\gamma)}{2}<0,
\end{equation}
or if all inequalities in the constraints are
reversed.

This estimate holds in the same sense as Proposition~\ref{prop:symbolic-est-real}.
\end{prop}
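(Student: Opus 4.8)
The plan is to run a positive commutator argument localized to the region $x/|\sigma|$ near $0$ — the resolved scattering zero section, where the elliptic estimates \eqref{eq:br-fiber-infty}--\eqref{eq:scb-fiber-infty} fail — and then to glue it to those elliptic estimates exactly as in the proof of Proposition~\ref{prop:symbolic-est-real}. As there, it is convenient to work with $\tilde P(\sigma)=\sigma^{-1}\hat P(\sigma)\in\Psiscbr^{2,0,-1,-1,1}(X)$, for which $\im\tilde p(\sigma)=-(\im\sigma)|\sigma|^{-2}(\tau^2+\mu^2)\leq 0$ when $\im\sigma\geq 0$. The single new feature relative to the real case is that the skew-adjoint part $\im\tilde P(\sigma)$ no longer sits below the commutator in the sc-decay sense, so its precise structure from Lemma~\ref{lemma:im-P-sigma-structure} must be exploited; the assumption that $\beta$ is skew-symmetric enters precisely there.

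Concretely, I would take $A\in\Psibr^{2\tilde r-1,2l+1,-2\tilde\nu+2l+1,-\infty}(X)=\Psiscbr^{2\tilde r-1,2(\tilde r+l),2l+1,-2\tilde\nu+2l+1,-\infty}(X)$ with principal symbol $x^{-2l-1}(\taub^2+\mub^2)^{\tilde r-1/2}\phi(x/|\sigma|)$, $\phi\geq 0$ supported near $0$ and $\equiv 1$ nearby, with $\tilde\nu=\alpha-1/2$ as in the proof of Proposition~\ref{prop:symbolic-est-real}, together with a correction operator $S\in\Psiscbr^{-1,-1,0,0,0}(X)$ whose principal symbol $\hat s$ generalizes the $\hat s_0$ of the lemma containing \eqref{eq:commutator-sc-version-imag-8}: $\hat s$ is chosen to cancel simultaneously the indefinite real-principal-type term $\propto\frac{\re\sigma}{|\sigma|^2}\taub(\taub^2+\mub^2)$ in \eqref{eq:commutator-sc-version-imag} and the indefinite part of the principal symbol $\im\bigl(x^2\taub\beta\frac{\overline\sigma}{|\sigma|^2}-x(\beta-\gamma)+x\sigma\varpi\bigr)$ of the summand $W(\sigma)$ of $\im\tilde P(\sigma)$ from Lemma~\ref{lemma:im-P-sigma-structure}. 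Then one computes the principal symbol of
\[
i(\tilde P(\sigma)^*A-A\tilde P(\sigma))+AS\tilde P(\sigma)+\tilde P(\sigma)^*SA
\]
at $x=0$: the $i[\re\tilde P(\sigma),A]$ part is $\sigma^{2\tilde\nu}$ times \eqref{eq:commutator-sc-version-imag}, the $W$-term enters through $\im\tilde P(\sigma)A+A\im\tilde P(\sigma)$, and the $S$-terms contribute $2\re\tilde p(\sigma)\,\hat s\,a$; after the cancellations, and suppressing $\phi(x/|\sigma|)$ and its derivatives (supported in the elliptic region, hence harmless), this is a constant multiple of
\[
\sigma^{2\tilde\nu+1}x^{-2l}(\taub^2+\mub^2)^{\tilde r-3/2}\Bigl(-\bigl(l+\tilde r-\tfrac{\im(\beta+\gamma)}2\bigr)\taub^2+\bigl(l+\tfrac12-\tfrac{\im(\beta-\gamma)}2\bigr)\mub^2\Bigr),
\]
exactly as in \eqref{eq:modified-commutator-real-princ}. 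Under \eqref{eq:symbolic-est-imag-constr} this is negative definite (positive definite if all inequalities are reversed, in which case $\im\sigma\leq0$ and $\im\tilde p(\sigma)\geq0$), so the operator above is $\mp B^*B+F$ with $B$ of the expected order and $F$ one order lower in the sc-decay sense.

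The crux is the part of $\im\tilde P(\sigma)A+A\im\tilde P(\sigma)$ left over after the $W$-cancellation, namely $-(\im\sigma)(TA+AT)$ with $T=T(\sigma)$ as in Lemma~\ref{lemma:im-P-sigma-structure}. Since $A$ carries the genuinely non-negative symbol $x^{-2l-1}(\taub^2+\mub^2)^{\tilde r-1/2}\phi\geq0$ one may write $A=B_A^*B_A+R$ with $R$ lower order, and since $T=\sum_jT_j^2+(\text{terms lower order in sc-decay})$ with $T_j=T_j^*$, one gets $\langle(TA+AT)u,u\rangle=2\sum_j\|T_jB_Au\|^2+\langle Eu,u\rangle$ with $E$ of lower sc-decay order. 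Hence for $\im\sigma\geq0$ one has $-(\im\sigma)\langle(TA+AT)u,u\rangle\leq |\sigma|\,|\langle Eu,u\rangle|$: a one-sided bound by lower-order quantities. This is exactly where the sign statement of Lemma~\ref{lemma:im-P-sigma-structure} (that $\im\tilde P(\sigma)$ has the operator-theoretic sign of $-(\im\sigma)$ up to controllable terms) is used, and it is why the sign of $\im\sigma$ appears in the hypotheses; it is also the main obstacle, since without the sum-of-squares structure this higher-order term would overwhelm the commutator. Pairing the identity above with $u$, absorbing $\langle Fu,u\rangle$, $\langle Eu,u\rangle$ and the $\phi'$-terms via the elliptic estimate \eqref{eq:scb-fiber-infty} (all involve $u$ one sc-order lower near sc-fiber infinity, controlled by $\hat P(\sigma)u$ plus far lower-order $u$), and applying Cauchy--Schwarz to the two pairings $\langle\tilde P(\sigma)u,Au\rangle$ and $\langle S\tilde P(\sigma)u,Au\rangle$ (each bounded by $\epsilon\|Bu\|^2$ plus a constant times $\hat P(\sigma)u$ in the stated norm), yields the asserted estimate for sufficiently regular $u$; the smallness $|\im\sigma|<\sigma_0$ guarantees that the various error operators, which carry an extra power of $\sigma$ through the resolved calculus, can indeed be absorbed. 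Finally, the reduction to arbitrary $u$ (subject to the a priori hypotheses) is by the same regularization as in Proposition~\ref{prop:symbolic-est-real}: replace $A$ by $A^{(\ep)}$, insert the overall factor $f_\ep(\taub^2+\mub^2)$, and replace $\tilde r$ by \eqref{eq:tilde-r-replace} (except in exponents) as in Remark~\ref{rem:regularizer-choice}; since $0\leq\frac{\ep(\taub^2+\mub^2)}{1+\ep(\taub^2+\mub^2)}\leq 1$ the definiteness of the principal symbol survives, and the rest of the argument is unchanged, with the reversed-inequality case handled identically after interchanging the roles of $+B^*B$ and $-B^*B$.
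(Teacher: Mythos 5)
Your overall strategy is the paper's: the same commutant $a=x^{-2l-1}|\sigma|^{2\tilde\nu}(\taub^2+\mub^2)^{\tilde r-1/2}\phi(x/|\sigma|)$ (with $\tilde\nu=\alpha-1/2$), the same modification of $\hat s_0$ by $-(\im\beta)\taub(\taub^2+\mub^2)^{-1}$ using skew-symmetry of $\beta$, the structure Lemma~\ref{lemma:im-P-sigma-structure} for $\im\tilde P(\sigma)$, the factorization $A=A_1^2$ with the double-commutator identity $TA+AT=2A_1TA_1+[[T,A_1],A_1]$, absorption of the sc-differentiability-heavy errors via \eqref{eq:scb-fiber-infty}, Cauchy--Schwarz on the $\hat P(\sigma)u$ pairings, smallness of $|\im\sigma|$ to absorb the $\varpi$ shifts, and the regularization of Remark~\ref{rem:regularizer-choice}. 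The one place where your argument, as written, would fail is the treatment of the leftover skew term. You bound $-(\im\sigma)\langle(TA+AT)u,u\rangle\leq|\sigma|\,|\langle Eu,u\rangle|$ by first discarding $-(\im\sigma)\,2\sum_j\|T_jA_1u\|^2\leq 0$ and then proposing to control $\langle Eu,u\rangle$ "via the elliptic estimate". But $E$ contains $A_1(T_jT'_j+T'_jT_j)A_1$, and $\langle T'_jA_1u,T_jA_1u\rangle$ involves exactly the quantity $T_jA_1u$ whose square you just threw away; it is higher order than $Bu$ in the sc-decay sense (not merely in sc-differentiability), so \eqref{eq:scb-fiber-infty} does not control it, and neither does $\|Bu\|$. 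These cross terms are not "lower-order quantities".

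The paper's proof handles this by \emph{retaining} the positive term $2(\im\sigma)\sum_j\|T_jA_1u\|^2$ from \eqref{eq:commutator-sc-version-imag-lead} on the good side of the pairing identity, applying Cauchy--Schwarz to $(\im\sigma)\langle T_jA_1u,T'_jA_1u\rangle$ so that the $\ep\|T_jA_1u\|^2$ halves are absorbed into that retained term (absorption happens inside the $(\im\sigma)$ bracket, which is why the sign condition $\im\sigma\geq 0$ is what matters, not smallness), while the $\||\im\sigma|^{1/2}T'_jA_1u\|^2$ halves are absorbed into $\|Bu\|^2$ using that they have the same orders as $B$ but extra vanishing in $x/\sigma$, so that taking $\supp\phi$ small makes their contribution a small multiple of $\|Bu\|^2$; this small-support mechanism is absent from your write-up. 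The repair is compatible with your setup --- Cauchy--Schwarz and absorb \emph{before} discarding the nonpositive remainder, and invoke the small support of $\phi$ for the $T'_j$ halves --- but as stated your one-sided bound discards the only term capable of absorbing the cross terms, so the step needs to be redone along these lines.
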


\begin{proof}
Let
$$
A\in \Psibr^{2\tilde
  r-1,2l+1,-2\tilde\nu+2l+1,-\infty}(X)=\Psiscbr^{2\tilde r-1,2(\tilde
  r+l),2l+1,-2\tilde\nu+2l+1,-\infty}(X)
$$
with principal symbol
$$
a=x^{-2l-1}|\sigma|^{2\tilde\nu}(\taub^2+\mub^2)^{\tilde
  r-1/2}\phi(x/|\sigma|)^2,
$$
with $\phi\geq 0$ as above. 
Notice that \eqref{eq:commutator-sc-version-imag-8}
is $\leq 0$ if $l+1/2<0$, $l+\tilde r>0$ and $\geq 0$ if
$l+1/2>0$, $l+\tilde r<0$, matching the sign of the principal symbol
of $\im \tilde
P(\sigma) A+A\tilde P(\sigma)$ term if $\im\sigma\geq 0$, resp.\
$\im\sigma\leq 0$. Note also that the subprincipal terms of
$\tilde P(\sigma)$ arising from $\beta,\gamma$ and $\varpi$, namely the $W$
term in Lemma~\ref{lemma:im-P-sigma-structure}, yield a
contribution to $\im \tilde
P(\sigma) A+A\im\tilde P(\sigma)$ in $\Psiscbr^{2\tilde r,2(\tilde
  r+l)-1,2l,-2\tilde\nu+2l,-\infty}(X)$ which has principal symbol
\begin{equation*}\begin{aligned}
&2\im \Big(
x^2\taub\beta\frac{\overline{\sigma}}{|\sigma|^2}-x(\beta-\gamma)+x\sigma\varpi\Big)a\\
&=2x^2\taub(\im\beta)\frac{\re\sigma}{|\sigma|^2}a-2x^2\taub\re\beta\frac{\im\sigma}{|\sigma|^2}a-2x\im(\beta-\gamma)a+2x(\im\sigma)\varpi
a.
\end{aligned}\end{equation*}
Assuming that $\re\beta=0$, adding to this
\begin{equation*}\begin{aligned}
&-2(\im\beta)(\re\tilde p(\sigma)) \taub(\taub^2+\mub^2)^{-1}a\\
&=-2(\im\beta)(\re\tilde p(\sigma)) x^{-2l-1}\taub(\taub^2+\mub^2)^{\tilde
  r-3/2}\sigma^{2\tilde\nu}\phi(x/|\sigma|)
\end{aligned}\end{equation*}
we obtain
\begin{equation*}\begin{aligned}
&4x(\im\beta)(\taub^2+\mub^2)^{-1}\taub^2a-2x\im(\beta-\gamma)a+2x(\im\sigma)\varpi
a\\
&=2x\Big((\im(\beta+\gamma)+(\im\sigma)\varpi)\taub^2-(\im(\beta-\gamma)-(\im\sigma)\varpi)\mub^2\Big)(\taub^2+\mub^2)^{-1}a.
\end{aligned}\end{equation*}
Let
$$
\hat s=\hat s_0-(\im\beta)\taub(\taub^2+\mub^2)^{-1}
$$
be the principal symbol of
$S\in\Psiscbr^{-1,-1,0,0,0}(X)=\Psibr^{-1,0,0,0}(X)$. We can arrange that
$$
A=A_1^2,\ A_1=A_1^*,\qquad A_1\in \Psiscb^{\tilde r-1/2,\tilde
  r+l,l+1/2,-\tilde\nu+l+1/2,-\infty}(X)
$$
by choosing $A_1$ first with
the desired principal symbol.
Then
\begin{equation*}\begin{aligned}
TA+AT&=TA_1^2+A_1^2 T=2A_1 TA_1+[T,A_1]A_1+A_1[A_1,T]\\
&=2A_1TA_1+[[T,A_1],A_1],
\end{aligned}\end{equation*}
and now the second term is two orders lower than first due to the
double commutator.
Combined with \eqref{eq:commutator-sc-version-imag-8} this gives that
\begin{equation}\begin{aligned}\label{eq:commutator-sc-version-imag-88}
&i(\hat P(\sigma)^*A-A\hat P(\sigma)) +AS\hat P(\sigma)+\hat
P(\sigma)^*SA\\
&=(\im \hat P(\sigma) A+A\im \hat P(\sigma))+i[\re \hat P(\sigma),A]+AS\hat P(\sigma)+\hat P(\sigma)^*SA
\end{aligned}\end{equation}
in
$
\Psiscbr^{2\tilde r+1,2(\tilde
  r+l),2l,2l-2\tilde\nu,-\infty}(X)$
is
of the form
\begin{equation}\begin{aligned}\label{eq:commutator-sc-version-imag-lead}
2(\im\sigma)A_1TA_1=2(\im\sigma)\sum_j A_1^*T_j^*T_j
A_1&+2(\im\sigma)A_1^*T_j^*T'_j A_1\\
&+2(\im\sigma)A_1^*(T'_j)^*T_j A_1,
\end{aligned}\end{equation}
plus a term in $\Psiscbr^{2\tilde r-1,2(\tilde
  r+l)-1,2l,2l-2\tilde\nu,-\infty}(X)$ whose principal symbol is
\begin{equation}\begin{aligned}\label{eq:commutator-sc-version-imag-16}
\sigma^{2\tilde\nu}x^{-2l}(\taub^2+\mub^2)^{\tilde
  r-3/2}4\Big(&-\Big(l+\tilde
r-\frac{\im(\beta+\gamma)-(\im\sigma)\varpi}{2}\Big) \taub^2\\
&\qquad+\Big(l+1/2-\frac{\im(\beta-\gamma)-(\im\sigma)\varpi}{2}\Big)\mub^2\Big)
\end{aligned}\end{equation}
times $\phi(x/\sigma)^2$, plus a term localized where we have elliptic estimates,
arising from $\phi'$.
For suitable small $\im\sigma$, the $(\im\sigma)\varpi$ terms can be
absorbed in the definite
$$
l+\tilde
r-\frac{\im(\beta+\gamma)}{2} ,\qquad
l+1/2-\frac{\im(\beta-\gamma)}{2}
$$
terms, so \eqref{eq:commutator-sc-version-imag-16} is of the form
$b^2$ with
\begin{equation*}\begin{aligned}
b=\sigma^{\tilde\nu}x^{-l}(\taub^2+\mub^2)^{\tilde
  r/2-3/4}2\Big(&-\Big(l+\tilde
r-\frac{\im(\beta+\gamma)-(\im\sigma)\varpi}{2}\Big) \taub^2\\
&\qquad+\Big(l+1/2-\frac{\im(\beta-\gamma)-(\im\sigma)\varpi}{2}\Big)\mub^2\Big)^{1/2}\phi(x/\sigma).
\end{aligned}\end{equation*}
Thus, \eqref{eq:commutator-sc-version-imag-16}
can be written as $B^*B+F$ with the principal symbol of symbol of $B$
being $b$,
\begin{equation*}\begin{aligned}
&B\in\Psiscbr^{\tilde r-1/2,\tilde
  r+l-1/2,l,l-\tilde\nu-1/2,-\infty}(X)=\Psibr^{\tilde
  r-1/2,l,l-\tilde\nu-1/2,-\infty}(X),\\
& F\in\Psiscbr^{2\tilde r, 2\tilde r+2l-2,2l,2l-2\tilde\nu-1,-\infty}(X).
\end{aligned}\end{equation*}

Now, applying both sides of
\eqref{eq:commutator-sc-version-imag-88} to $u$ and pairing with $u$
we have
\begin{equation}\begin{aligned}
&2(\im\sigma)\sum_j \|T_j
A_1 u\|^2+4(\im\sigma)\re\langle T_j A_1 u,T'_j
A_1u\rangle+\|Bu\|^2+\langle Fu,u\rangle\\
&=2\im\langle \hat
P(\sigma)u,Au\rangle+2\re\langle \hat P(\sigma)u,SAu\rangle.
\end{aligned}\end{equation}
The terms from the `cross terms' in \eqref{eq:commutator-sc-version-imag-lead} can be
estimated via Cauchy-Schwartz:
$$
|(\im\sigma)\langle  T_j A_1 u, T'_j A_1 u\rangle|\leq \im\sigma
(\ep\|T_jA_1u\|^2+\ep^{-1}\|T_j'A_1 u\|^2),
$$
and now, for sufficiently small $\ep>0$, the $T_j$ term can be absorbed into $\|T_jA_1u\|^2$ arising
from the first term of the right hand side of \eqref{eq:commutator-sc-version-imag-lead},
while the second term, with $|\im\sigma|^{1/2}$ included, has
$$
|\im\sigma|^{1/2} T'_j A_1\in \Psiscbr^{\tilde r-1/2,\tilde
  r+l-1,l-1/2,-\tilde\nu+l,-\infty}(X)=\Psibr^{\tilde r-1/2,l-1/2,-\tilde\nu+l,-\infty}(X)
$$
which, for $\phi$ with sufficiently small support, can be absorbed into $\|Bu\|^2$ as $B\in\Psibr^{\tilde r-1/2,l,l-\tilde\nu,-\infty}(X)$, so the orders are the same, but
there is extra vanishing in $x/\sigma$ in the strong b-decay
sense. Finally, as $b$ is an elliptic multiple of $x^{1/2}a_1$, we can
estimate
$|\langle \hat
P(\sigma)u,Au\rangle|$, modulo terms like the one given by $F$, by
$$
\|x^{-1/2} A_1 \hat P(\sigma)u\|\|B u\|\leq \ep^{-1}\|x^{-1/2} A_1 \hat P(\sigma)u\|^2+\ep\|B u\|^2,
$$
and now for sufficiently small $\ep$, the last term on the right hand
side can be absorbed in $\|Bu\|^2$. This, combined with the existing elliptic estimates,  gives the desired estimate
with $-N$ replaced by a half order improvement, but an iterative
argument gives the full conclusion.

The case of reversed inequalities is similar.
\end{proof}

\section{Normal operator}\label{sec:normal}
We now turn to normal operators.

We recall that $(\nu_-,\nu_+)$ is the central weight interval for
the scattering end, see \eqref{eq:central-sc-interval-1}, and in
particular for $\beta=0,\beta'=0$ we have
$$
(\nu_-,\nu_+)=\Big(-1-\frac{n-2}{2},-1+\frac{n-2}{2}\Big).
$$
As explained in the paragraph following
\eqref{eq:central-sc-interval-1}, a dilation invariant operator can be
considered from the perspective of either `end' of the dilation orbit,
which concretely for the rescaled variable $X=x/\sigma$ means either
from the scattering end, where $X\to 0$, or the conic point end, where
$X^{-1}\to 0$, and as the Mellin transform exponent changes sign under
this perspective change, the weights, including the central weight
interval, also change sign.
In part of this section the conic point perspective plays a bigger
role, so we also
use the corresponding indicial roots; we
write
$$
(\nuc_-,\nuc_+)=(-\nu_+,-\nu_-)
$$
for the corresponding interval, so for $\beta=0$, $\beta'=0$, we have
$$
(\nuc_-,\nuc_+)=\Big(1-\frac{n-2}{2},1+\frac{n-2}{2}\Big),
$$
and in general
\begin{equation}\begin{aligned}\label{eq:central-conic-interval-1}
(\nuc_-,\nuc_+)=
\Big(1-&\frac{\im\beta}{2}-\re\sqrt{-\frac{\beta^2}{4}+\Big(\frac{n-2}{2}\Big)^2+\beta'},
\\
&1-\frac{\im\beta}{2}+\re\sqrt{-\frac{\beta^2}{4}+\Big(\frac{n-2}{2}\Big)^2+\beta'}\,\Big),
\end{aligned}\end{equation}
where we recall that $\beta$ is skew-symmetric and $\re\beta'>\frac{\beta^2}{4}-\Big(\frac{n-2}{2}\Big)^2$.

The key point is to show that

\begin{prop}\label{prop:zero-normal}
Let $0<x_0'<x_0$. Suppose that $\tilde r,l\in\RR$,
$$
\alpha\in \Big(l-\nu_+,l-\nu_-\Big)=\Big(l+\nuc_-,l+\nuc_+\Big),
$$
and
\begin{equation}\begin{aligned}\label{eq:zero-normal-tilde-r-l-constraint}
&\text{either}\ \tilde r+l>-1/2+\frac{\im(\beta+\gamma)}{2},\ l<-1/2+\frac{\im(\beta-\gamma)}{2},\\
&\text{or}\ \tilde r+l<-1/2+\frac{\im(\beta+\gamma)}{2},\ l>-1/2+\frac{\im(\beta-\gamma)}{2}.
\end{aligned}\end{equation}
There is $C>0$ such that for distributions $v$ supported in $x\leq x'_0$,
$v\in\Hb^{\tilde r,l}$, 
\begin{equation}\label{eq:zero-normal-to-show}
\|(1+x/|\sigma|)^\alpha v\|_{\Hb^{\tilde r,l}}\leq
  C\|(1+x/|\sigma|)^\alpha (x+|\sigma|)^{-1}N_0(\hat
  P(\sigma))v\|_{\Hb^{\tilde r-1,l+1}},
\end{equation}
with $C$ independent of $\sigma$ with $\im\sigma\geq 0$, $\sigma\neq 0$. The same is true
if we take $\tilde r_0\in\RR$, $\tilde r_0<\tilde r$, satisfying the
same inequalities as $\tilde r$, and only assume that $v\in\Hb^{\tilde
  r_0,l}$.

Similarly, with $\tilde r+l$ replaced by $r$ in
\eqref{eq:zero-normal-tilde-r-l-constraint},
there is $C>0$ such that for distributions $v$ supported in $x\leq x'_0$,
$v\in\Hscbr^{s, r,l}$, 
\begin{equation}\label{eq:zero-normal-to-show2}
\|(1+x/|\sigma|)^\alpha v\|_{\Hscbr^{s,r,l}}\leq
  C\|(1+x/|\sigma|)^\alpha (x+|\sigma|)^{-1}N_0(\hat
  P(\sigma))v\|_{\Hscbr^{s-2, r+1,l+1}},
\end{equation}
with $C$ independent of $\sigma$ with $\im\sigma\geq 0$, $\sigma\neq 0$, and the
conclusion also holds if we take $s_0,r_0\in\RR$, $s_0<s$, $r_0<r$, satisfying the
same inequalities as $r$, and only assume that $v\in\Hscbr^{s_0,
  r_0,l}$.
\end{prop}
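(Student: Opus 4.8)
The plan is to use the joint $(x,\sigma)$-dilation invariance of $N_0(\hat P(\sigma))$ to collapse the statement to a single, $\sigma$-independent estimate for the conjugated spectral family of the \emph{exact} conic Laplacian at a spectral parameter on the unit circle, which is then an instance of the limiting absorption estimates of \cite{Vasy:Limiting-absorption-lag} at the scattering end, supplemented by a standard b-elliptic / indicial-root estimate at the conic point produced by the rescaling. Concretely, I would set $X=x/|\sigma|$, $\hat\sigma=\sigma/|\sigma|$, so that, as recorded just before the statement,
\[
N_0(\hat P(\sigma))=|\sigma|^2\,\hat P_0(\hat\sigma),\qquad \hat P_0(\hat\sigma)=e^{-i\hat\sigma/X}\Big(\Delta_{g_0}+\beta X^2\big(XD_X+i\tfrac{n-2}{2}\big)+\beta'X^2-\hat\sigma^2\Big)e^{i\hat\sigma/X},
\]
with $|\hat\sigma|=1$, $\im\hat\sigma\ge 0$, where $\hat P_0(\hat\sigma)$ lives on the exact cone $(0,\infty)_X\times\pa X$. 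Pulling back by the dilation $v(x)\mapsto v(|\sigma|X)$ carries $N_0(\hat P(\sigma))$ to $|\sigma|^2\hat P_0(\hat\sigma)$, the weight $(1+x/|\sigma|)^\alpha$ to $(1+X)^\alpha$, the prefactor $(x+|\sigma|)^{-1}$ to $|\sigma|^{-1}(1+X)^{-1}$, and the norms $\Hb^{\tilde r,l}$, $\Hscbr^{s,r,l}$ to fixed powers of $|\sigma|$ times the corresponding norms in $X$ — with the rescaled momenta $\tausc=\tau/\sigma$, $\musc=\mu/\sigma$ and the resolved weights at $x=\sigma=0$ becoming the genuine scattering momenta and scattering decay weight of the cone at $X\to0$, so that $\Hscbr^{s,r,l}$ becomes the ordinary scattering-b Sobolev norm $\Hscb^{s,r,l}$ on the cone. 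Since $N_0$ is dilation invariant, every power of $|\sigma|$ cancels between the two sides of \eqref{eq:zero-normal-to-show} and of \eqref{eq:zero-normal-to-show2}, and both reduce to the single estimate
\[
\|(1+X)^\alpha w\|_{\Hb^{\tilde r,l}}\le C\,\|(1+X)^{\alpha-1}\hat P_0(\hat\sigma)w\|_{\Hb^{\tilde r-1,l+1}}
\]
(and its scattering-b analogue), with $C$ uniform over the \emph{compact} set $\{|\hat\sigma|=1,\ \im\hat\sigma\ge0\}$, which is moreover bounded away from $0$; the support hypothesis on $v$ then plays no role, the estimate for $\hat P_0(\hat\sigma)$ on the full cone being support-free.

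To establish this fixed-parameter estimate I would treat the two ends of the cone separately. Near the scattering end $X\to0$ the exact conic metric is the model scattering metric, and $\hat P_0(\hat\sigma)$ is precisely of the type analyzed in \cite{Vasy:Limiting-absorption-lag}: after the $e^{-i\hat\sigma/X}$ conjugation the outgoing radial set lies at the zero section, and one has elliptic estimates at fiber infinity and sc-fiber infinity, real-principal-type propagation, and a radial-point estimate at the zero section; the last requires exactly the threshold inequalities \eqref{eq:zero-normal-tilde-r-l-constraint} on $(\tilde r,l)$ (resp.\ $(s,r,l)$), while $(1+X)^\alpha=1+O(X)$ is a smooth elliptic factor of order $0$ there and imposes no constraint. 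Near the conic point $X\to\infty$ the factor $e^{-i\hat\sigma/X}$ is smooth with limit $1$, the term $\hat\sigma^2$ is a bounded perturbation of $\Delta_{g_0}$ (which blows up like $(1/X)^{-2}$), so $\hat P_0(\hat\sigma)$ is b-elliptic with respect to the boundary defining function $1/X$, with the same indicial roots as $\Delta_{g_0}$ (modified by the skew-symmetric $\beta$ and by $\beta'$ as in \eqref{eq:central-conic-interval-1}); the associated weighted b-elliptic, equivalently Mellin / indicial-root, estimate holds provided the weight at the conic point — which works out to $\alpha-l$ in the convention of \eqref{eq:central-conic-interval-1} — lies in $(\nuc_-,\nuc_+)$, exactly the hypothesis $\alpha\in(l+\nuc_-,l+\nuc_+)$. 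Patching the scattering-end estimate, the conic-point estimate, and the (elliptic, hence harmless) estimate on the compact part with a partition of unity on the compactified cone, and absorbing commutator and overlap errors in the usual way, yields an a priori estimate with a relatively compact error; this error is removed using triviality of the kernel of $\hat P_0(\hat\sigma)$ on these weighted spaces, which holds throughout $\{|\hat\sigma|=1,\ \im\hat\sigma\ge0\}$ — by the Rellich-type uniqueness of \cite{Vasy:Limiting-absorption-lag} at the scattering end together with the indicial-root structure at the conic point (for $\im\hat\sigma>0$ the operator is in fact invertible on $L^2$, since $\hat\sigma^2\notin[0,\infty)$). Continuity in $\hat\sigma$ of all symbolic and normal-operator data over the compact half-circle gives the uniformity of $C$. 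Finally, the refinement allowing a weaker a priori hypothesis ($\tilde r_0<\tilde r$, or $s_0<s$ and $r_0<r$, satisfying the same inequalities) is obtained exactly as in Proposition~\ref{prop:symbolic-est-real}, by inserting a regularizer $f_\ep(\taub^2+\mub^2)$ as in Remark~\ref{rem:regularizer-choice}; the limitation of regularizability at the radial set is harmless precisely because $\tilde r_0$ (resp.\ $r_0$) lies on the same side of the threshold as $\tilde r$ (resp.\ $r$).

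The main obstacle I anticipate is the conic-point end, which is not covered by \cite{Vasy:Limiting-absorption-lag}: one must verify that after rescaling $\hat P_0(\hat\sigma)$ is genuinely b-elliptic at $X=\infty$ with indicial roots given by \eqref{eq:central-conic-interval-1}, that the exponential conjugation contributes nothing there, and — more delicately — that the scattering-end and conic-point estimates are mutually compatible so that they glue into one global estimate with an $\hat\sigma$-uniform constant (in particular that the threshold conditions on $(\tilde r,l)$, resp.\ $(s,r,l)$, and the indicial-root condition on $\alpha-l$ can be met simultaneously, which they can precisely because the relevant intervals do not interact). A secondary but essential point is the bookkeeping of weights and orders under $X=x/|\sigma|$: one must check that the powers of $|\sigma|$ generated by rescaling the $\Hb^{\tilde r,l}$ and $\Hscbr^{s,r,l}$ norms, by $N_0(\hat P(\sigma))=|\sigma|^2\hat P_0(\hat\sigma)$, and by $(x+|\sigma|)^{-1}=|\sigma|^{-1}(1+X)^{-1}$, cancel exactly, so that the problem truly reduces to the fixed-parameter estimate; this is where the normalization \eqref{eq:Hscbr-def} and the definitions of $\nu_\pm$ and $\nuc_\pm=-\nu_\mp$ are used.
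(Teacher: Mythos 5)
Your proposal follows essentially the same route as the paper's proof: rescale via $X=x/|\sigma|$ using the joint dilation invariance of $N_0(\hat P(\sigma))$, track the resulting powers of $|\sigma|$ in the weighted norms so that both \eqref{eq:zero-normal-to-show} and \eqref{eq:zero-normal-to-show2} collapse to the fixed-parameter limiting absorption estimate for the conjugated exact-cone operator $\tilde P(\hat\sigma)$, $|\hat\sigma|=1$, $\im\hat\sigma\geq 0$, taken from \cite{Vasy:Limiting-absorption-lag} at the scattering end and supplemented at the conic point $X^{-1}=0$ by b-ellipticity and the indicial-root (central weight interval $(\nuc_-,\nuc_+)$) condition, which is precisely the content of the quoted Limiting Absorption Proposition together with Remark~\ref{rem:limiting-absorption}. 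The only blemishes are cosmetic: your explicit formula for the rescaled model omits the $\gamma$-contribution coming from $Q$ (the paper's $\tilde P(\varsigma)$ carries $-2\varsigma X(XD_X+i\frac{n-1}{2}+\frac{\beta-\gamma}{2})$), and the support hypothesis is not entirely idle --- it is what makes the conic-point weight irrelevant, so that the single-weight $\Hb^{\tilde r,l}$ (resp.\ $\Hscbr^{s,r,l}$) norms in the statement coincide with the two-ended norms on the cone to which the rescaled estimate applies.
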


\begin{rem} Notice
that $\alpha=0$ is acceptable for suitable $l<-1/2$ for $n\geq 3$, while
for suitable $l>-1/2$ only if $n\geq 4$.
\end{rem}

We postpone the proof of this proposition to the end of this section,
and rather start by deducing its consequences.
The first is:

\begin{prop}\label{prop:b-res-imp-est}
For $\tilde r,l,\alpha$ as in Proposition~\ref{prop:zero-normal},
$\tilde r'$ arbitrary, $\delta>0$ sufficiently small, there is $C>0$
such that
we have
\begin{equation}\begin{aligned}\label{eq:b-res-imp-est}
&\|(x+|\sigma|)^{\alpha}u\|_{\Hscbr^{s,\tilde r+l-1/2,l}}\\
&\leq C(\|(x+|\sigma|)^{\alpha-1}\hat
P(\sigma)u\|_{\Hscbr^{s-2,\tilde r+l+1/2,l+1}}
+\|(x+|\sigma|)^\alpha
  x^\delta u\|_{\Hb^{\tilde r'+1,l}}).
\end{aligned}\end{equation}
\end{prop}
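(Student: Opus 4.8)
The plan is to derive \eqref{eq:b-res-imp-est} by combining the symbolic positive commutator estimates of Section~\ref{sec:symbolic} with the effective normal operator estimate of Proposition~\ref{prop:zero-normal}, in the familiar pattern of microlocal Fredholm arguments: the symbolic estimate controls $u$ modulo a term measured in a weak space, and Proposition~\ref{prop:zero-normal} upgrades that term near $\pa X$, at the cost of the stated $x^\delta$-error. First I would invoke Proposition~\ref{prop:symbolic-est-real} (for real $\sigma$) together with its analogue for $\im\sigma\ge 0$; the sign conditions on $\tilde r,l$ imposed in Proposition~\ref{prop:zero-normal} are exactly those under which those estimates apply. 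Multiplying the resulting inequality through by $|\sigma|^{\alpha}$ converts the $(1+x/|\sigma|)^\alpha$ weights there into the $(x+|\sigma|)^\alpha$ weights of \eqref{eq:b-res-imp-est}, turns the $\hat P(\sigma)u$ term into the first term on the right of \eqref{eq:b-res-imp-est}, and reduces matters to bounding the error term $\|(x+|\sigma|)^\alpha u\|_{\Hscbr^{-N,-N,l}}$.

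Next I would localize. Fix $\chi\in\CI(X)$, identically $1$ near $\pa X$ and supported in $\{x<x_0'\}$, and write $u=\chi u+(1-\chi)u$. On $\supp(1-\chi)$ the boundary defining function $x$ is bounded below, so the resolved structure is irrelevant, $\Hscbr$ reduces to an ordinary $\Hb$ space, and all of $(x+|\sigma|)^{\pm\alpha}$, $x^{\pm\delta}$ are bounded above and below uniformly in $\sigma$; hence, for $N$ large enough relative to the given $\tilde r'$, one has $\|(x+|\sigma|)^\alpha(1-\chi)u\|_{\Hscbr^{-N,-N,l}}\le C\|(x+|\sigma|)^\alpha x^\delta u\|_{\Hb^{\tilde r'+1,l}}$, which already sits inside the claimed error term.

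The main point is the piece $v=\chi u$, supported in $\{x<x_0'\}$, to which Proposition~\ref{prop:zero-normal} applies in the form \eqref{eq:zero-normal-to-show2} (using the freedom to lower $s,r$ there, together with the a priori membership of $u$ coming from finiteness of the right side of \eqref{eq:b-res-imp-est}, and, for general $u$, the iterative regularization already present in the proof of Proposition~\ref{prop:symbolic-est-real}). I would then split $N_0(\hat P(\sigma))v=\hat P(\sigma)v+\bigl(N_0(\hat P(\sigma))-\hat P(\sigma)\bigr)v$. The term $\hat P(\sigma)v=\hat P(\sigma)\chi u$, after commuting $\chi$ past $\hat P(\sigma)$ — the commutator being of lower order and supported near $\pa X$, hence reabsorbable — reproduces the first term on the right of \eqref{eq:b-res-imp-est}. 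For the remaining term, by definition $N_0(\hat P(\sigma))-\hat P(\sigma)\in x(x+\sigma)(x^\delta+\sigma)S^0\Diffb^2(X)$, so $(x+|\sigma|)^{-1}\bigl(N_0(\hat P(\sigma))-\hat P(\sigma)\bigr)\in x(x^\delta+\sigma)S^0\Diffb^2(X)$; using $x^\delta+\sigma\le C(x+\sigma)^\delta$ and $x+\sigma\le x_0'+\sigma_0$ on the region in play, one gets that, as an operator between the weighted resolved spaces appearing in \eqref{eq:zero-normal-to-show2}, this has norm $\le C(x_0'+\sigma_0)^\delta$, so for $x_0'$ and $\sigma_0$ chosen small the resulting contribution $\le C(x_0'+\sigma_0)^\delta\|(x+|\sigma|)^\alpha v\|_{\Hscbr^{s,r,l}}$ is absorbed into the left side, while the residual part, of the schematic form $x^\delta\cdot(\text{uniformly bounded})$, contributes a term $\le C\|(x+|\sigma|)^\alpha x^\delta u\|_{\Hb^{\tilde r'+1,l}}$ (with a fixed $\tilde r'$ dictated by the orders, then lowered to an arbitrary $\tilde r'$ by the same iteration). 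Collecting the estimates and absorbing the small terms gives \eqref{eq:b-res-imp-est}.

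I expect the key obstacle to be exactly the point flagged just before the definition of $N_0(\hat P(\sigma))$: the effective normal operator does not gain a b-decay order over $\hat P(\sigma)$, since their difference lies in $x(x+\sigma)(x^\delta+\sigma)S^0\Diffb^2(X)$, which has the same front-face order ($-2$) as $\hat P(\sigma)$ itself — so, unlike in a classical normal operator argument, the difference is not automatically a lower-order perturbation. The resolution must exploit the genuine smallness of the extra scalar factor $x^\delta+\sigma$ on $\{x<x_0'\}\cap\{|\sigma|\le\sigma_0\}$ — rather than any gain in operator order — to make that difference an absorbable perturbation, while carefully bookkeeping degrees through the resolved scattering-b Sobolev spaces and isolating the $x^\delta$-error it leaves behind. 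The subsidiary issues (a priori regularity for Proposition~\ref{prop:zero-normal} applied to $\chi u$, and relaxing the error's differential order to an arbitrary $\tilde r'$) are handled exactly as in the proofs of Propositions~\ref{prop:symbolic-est-real} and \ref{prop:zero-normal}.
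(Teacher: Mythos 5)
Your overall architecture is the paper's: symbolic estimate with weak error $\|(1+x/|\sigma|)^{\alpha}u\|_{\Hscbr^{-N,-N,l}}$, then Proposition~\ref{prop:zero-normal} applied to $v=\chi u$ at lowered orders, cutoff commutator absorbed into the $x^\delta$-error, and you even flag the correct key obstacle (no gain in the b-decay order in $\hat P(\sigma)-N_0(\hat P(\sigma))$). But your resolution of that obstacle is where the argument breaks. You claim that $(x+|\sigma|)^{-1}\bigl(N_0(\hat P(\sigma))-\hat P(\sigma)\bigr)\in x(x^\delta+\sigma)S^0\Diffb^2(X)$ is bounded from $\Hscbr^{s,r,l}$ to $\Hscbr^{s-2,r+1,l+1}$ with norm $O((x_0'+\sigma_0)^\delta)$, so that its contribution can be absorbed into the left-hand side. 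This is false: the difference is a genuine second-order b-differential operator, and in the regime $x\ll|\sigma|$ its coefficient is of size $x\sigma$ with no extra vanishing at the lift of $x=0$. Converting b-derivatives into the operators that are actually uniformly bounded on the $\Hscbr$ scale, namely $\frac{x}{x+\sigma}V$ with $V\in\Vb(X)$, costs factors of $\frac{x+\sigma}{x}$, and the weight shift from $(r,l)$ to $(r+1,l+1)$ in the target then requires a pointwise bound of the type $\sigma(x+\sigma)^2/x^2\le\epsilon$, which fails as $x/\sigma\to 0$ no matter how small $x_0'$ and $\sigma_0$ are. Put more bluntly: the error term involves two more b-derivatives of $v$ than the output norm of \eqref{eq:zero-normal-to-show} and one more than its input norm, and a scalar smallness factor such as $(x_0'+\sigma_0)^\delta$ cannot compensate an order mismatch. (The inequality $x^\delta+\sigma\le C(x+\sigma)^\delta$ addresses the size of the coefficient, not the order bookkeeping, which is the actual difficulty.) Note also that the proposition as stated involves no smallness of $\sigma_0$ or of the cutoff support; smallness of $\sigma$ enters only later, through $\sigma_j\to 0$ in the compactness argument.

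The paper handles this differently: it does not try to absorb the difference at the level of the normal operator estimate's own orders. Instead it writes $\hat P(\sigma)-N_0(\hat P(\sigma))\in x(x+\sigma)x^\delta\Diffb^2(X)+x(x+\sigma)\sigma\Diffb^2(X)$ and accepts the derivative loss, obtaining for $v=\chi u$ an estimate whose error terms are $\|(1+x/|\sigma|)^\alpha x^\delta v\|_{\Hb^{\tilde r+1,l}}$ and $\|(1+x/|\sigma|)^\alpha \sigma v\|_{\Hb^{\tilde r+1,l}}$, i.e.\ with one more b-derivative than the left-hand side of that step. This is rendered harmless precisely because the normal operator estimate is only used to control the weak $\Hscbr^{-N,-N,l}$ error of the symbolic estimate, so it may be applied with $\tilde r$ replaced by any $\tilde r'\in[-N,\tilde r-3/2)$: then both error terms live in norms strictly weaker than the full left-hand side $\Hscbr^{s,\tilde r+l-1/2,l}$ (using $s\ge\tilde r-1/2$, or the $\Hscbr$ version \eqref{eq:zero-normal-to-show2} in general). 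The $\sigma$-piece is then absorbed using the explicit scalar factor $\sigma$ together with the lowered orders, and only the $x^\delta$-piece survives, which is exactly the error term $\|(x+|\sigma|)^\alpha x^\delta u\|_{\Hb^{\tilde r'+1,l}}$ in \eqref{eq:b-res-imp-est} (kept because it is relatively compact, not because it is small). Your proof needs to be restructured along these lines; as written, the absorption step does not go through.
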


\begin{proof}[Proof of Proposition~\ref{prop:b-res-imp-est}.]
For the sake of notational convenience, we assume that $\sigma>0$ so
that we can avoid writing $|\sigma|$; the general case only needs
notational changes.

We first show the proposition under the assumption that $s\geq\tilde r-1/2$.
Due to Proposition~\ref{prop:zero-normal}, with $v$ having the support
properties indicated there, and as
$$
\hat P(\sigma)-N_0(\hat P(\sigma))\in x(x+\sigma)x^\delta\Diffb^2(X)+x(x+\sigma)\sigma\Diffb^2(X),
$$
we have
\begin{equation*}\begin{aligned}
&\|(1+x/\sigma)^\alpha v\|_{\Hb^{\tilde r,l}}\\
&\leq
  C(\|(1+x/\sigma)^\alpha (x+\sigma)^{-1}\hat
  P(\sigma)v\|_{\Hb^{\tilde r-1,l+1}}\\
&\qquad\qquad+\|(1+x/\sigma)^\alpha
  x^\delta v\|_{\Hb^{\tilde r+1,l}}+\|(1+x/\sigma)^\alpha
  \sigma v\|_{\Hb^{\tilde r+1,l}}).
\end{aligned}\end{equation*}
Furthermore, with $v=\chi(x)u$, where $\chi$ is supported near $0$, in
$x\leq x'_0<x_0$,
and is identically $1$ near $0$,
$$
\hat P(\sigma)v=\chi\hat P(\sigma)u+[\hat P(\sigma),\chi]u,
$$
with the second term supported in $x>0$ and $(1+x/\sigma)^\alpha
(x+\sigma)^{-1}$ times it controlled in $\Hb^{\tilde r-1,l+1}$ by
$\|(1+x/\sigma)^\alpha u\|_{\Hb^{\tilde r,l'}}$, $l'$ arbitrary, so
\begin{equation}\begin{aligned}\label{eq:b-res-imp-est-16}
&\|(1+x/\sigma)^\alpha \chi u\|_{\Hb^{\tilde r,l}}\\
&\leq C(\|(1+x/\sigma)^\alpha (x+\sigma)^{-1}\hat
  P(\sigma) u\|_{\Hb^{\tilde r-1,l+1}}\\
&\qquad\qquad+\|(1+x/\sigma)^\alpha
  x^\delta u\|_{\Hb^{\tilde r+1,l}}+\|(1+x/\sigma)^\alpha
  \sigma u\|_{\Hb^{\tilde r+1,l}}).
\end{aligned}\end{equation}

Thus, starting with the symbolic estimate
\begin{equation*}\begin{aligned}
&\|(1+x/\sigma)^{\alpha}u\|_{\Hscbr^{s,\tilde r+l-1/2,l}}\\
&\leq C(\|(1+x/\sigma)^{\alpha}(x+\sigma)^{-1}\hat
P(\sigma)u\|_{\Hscbr^{s-2,\tilde r+l+1/2,l+1}}+\|(1+x/\sigma)^{\alpha}u\|_{\Hscbr^{-N,-N,l}}),
\end{aligned}\end{equation*}
and estimating the last term by the above normal operator estimate,
with $\tilde r$ replaced by any $\tilde r'\in [-N,\tilde r-3/2)$, we
get, with
$s\geq\tilde r-1/2$ (so that the norm on the left hand side
is stronger than that of $\Hb^{\tilde r-1/2,l}$)
\begin{equation*}\begin{aligned}
&\|(1+x/\sigma)^{\alpha}u\|_{\Hscbr^{s,\tilde r+l-1/2,l}}\\
&\leq C(\|(1+x/\sigma)^{\alpha}(x+\sigma)^{-1}\hat
P(\sigma)u\|_{\Hscbr^{s-2,\tilde r+l+1/2,l+1}}\\
&\qquad\qquad+\|(1+x/\sigma)^\alpha
  x^\delta u\|_{\Hb^{\tilde r'+1,l}}+\|(1+x/\sigma)^\alpha
  \sigma u\|_{\Hb^{\tilde r'+1,l}}).
\end{aligned}\end{equation*}
Now for $\tilde r'$ small relative to $\tilde r$, the third term on
the right can
be absorbed into the left hand side, while the second is relatively
compact, so this is an estimate modulo compact errors.
Note that as $\sigma$ powers can be pulled out of the norms, this is
equivalent to
\begin{equation}\begin{aligned}\label{eq:b-res-imp-est-restate}
&\|(x+\sigma)^{\alpha}u\|_{\Hscbr^{s,\tilde r+l-1/2,l}}\\
&\leq C(\|(x+\sigma)^{\alpha-1}\hat
P(\sigma)u\|_{\Hscbr^{s-2,\tilde r+l+1/2,l+1}}
+\|(x+\sigma)^\alpha
  x^\delta u\|_{\Hb^{\tilde r'+1,l}}),
\end{aligned}\end{equation}
which completes the proof of the Proposition when $s\geq\tilde r-1/2$.

For the general case, we simply use the $\Hscbr$ version of
Proposition~\ref{prop:zero-normal}, which replaces
\eqref{eq:b-res-imp-est-16} with
\begin{equation*}\begin{aligned}
&\|(1+x/\sigma)^\alpha \chi u\|_{\Hscbr^{s,r,l}}\\
&\leq C(\|(1+x/\sigma)^\alpha (x+\sigma)^{-1}\hat
  P(\sigma) u\|_{\Hscbr^{s-2,r-1,l+1}}\\
&\qquad\qquad+\|(1+x/\sigma)^\alpha
  x^\delta u\|_{\Hscbr^{s, r+1,l}}+\|(1+x/\sigma)^\alpha
  \sigma u\|_{\Hscbr^{s, r+1,l}}).
\end{aligned}\end{equation*}

\end{proof}

We can then
finish the proof of the main theorem, as we do below, using a variant of the standard compactness considerations to
obtain that there is $\sigma_0>0$ such that
\begin{equation*}\begin{aligned}
&\|(1+x/\sigma)^{\alpha}u\|_{\Hscbr^{s,\tilde r+l-1/2,l}}\leq C\|(1+x/\sigma)^{\alpha}(x+\sigma)^{-1}\hat
P(\sigma)u\|_{\Hscbr^{s-2,\tilde r+l+1/2,l+1}},
\end{aligned}\end{equation*}
or equivalently
\begin{equation}\begin{aligned}\label{eq:b-res-est}
&\|(x+\sigma)^{\alpha}u\|_{\Hscbr^{s,\tilde r+l-1/2,l}}\leq C\|(x+\sigma)^{\alpha-1}\hat
P(\sigma)u\|_{\Hscbr^{s-2,\tilde r+l+1/2,l+1}},
\end{aligned}\end{equation}
hold for $0<|\sigma|\leq\sigma_0$ provided $\hat P(0)$ has trivial nullspace. Taking into account
that, by
\cite{Vasy:Limiting-absorption-lag}, for $\sigma\neq 0$, $\hat P(\sigma)$ is Fredholm of index zero on
the spaces stated in Theorem~\ref{thm:main} and
Theorem~\ref{thm:main-gen} (and is indeed shown to be even
invertible if $P(\sigma)=P(\sigma)^*$ for $\sigma$ real), and that in
that case the weight factors inside the norms are bounded above and below by positive
constants, hence can be dropped, the above
estimates, in which the subscript `$\res$', indicating the
zero energy behavior, can be dropped for fixed non-zero $\sigma$,
show
the invertibility of $\hat P(\sigma)$ for $\sigma$ with
$0<|\sigma|\leq\sigma_0$ by virtue of implying that the nullspace is trivial.
{\em This proves the
  main theorem, Theorem~\ref{thm:main}, as well as Theorem~\ref{thm:main-gen}!}

\begin{proof}[Proof of Theorem~\ref{thm:main} and of
  Theorem~\ref{thm:main-gen}]
It only remains to prove \eqref{eq:b-res-est}.
We proceed under the assumptions of, and using the notation of,
Proposition~\ref{prop:b-res-imp-est}, taking $\tilde r'$ so that $s>\tilde r'+1$, $\tilde r>\tilde r'+3/2$.
If \eqref{eq:b-res-est} is not true, there are sequences
$\sigma_j\to 0$ and $u_j$, for which one may
assume that $(x+\sigma_j)^{\alpha} u_j$ has unit norm in $\Hscbr^{s,\tilde r+l-1/2,l}$, and with
$(x+\sigma_j)^{\alpha-1} \hat P(\sigma_j)u_j\in \Hscbr^{s-2,\tilde r+l+1/2,l+1}$,
and such that $(x+\sigma_j)^{\alpha-1} \hat P(\sigma_j)u_j\to 0$ in $\Hscbr^{s-2,\tilde r+l+1/2,l+1}$. By
taking a subsequence (not shown in notation), using the sequential
compactness of the unit ball in $\Hb^{\tilde r'+1+\delta',l}$ in the weak
topology, and the continuity of the (family of) inclusion(s)
$\Hscbr^{s,\tilde r+l-1/2,l}\to \Hb^{\tilde
  r'+1+\delta',l}$, as well as the compactness of the
inclusion $\Hb^{\tilde r'+1+\delta',l}\to \Hb^{\tilde
  r'+1,l-\delta}$ for $\delta'>0$ sufficiently small, one may assume that
there is $v\in \Hb^{\tilde
  r'+1+\delta',l}$ such that $(x+\sigma_j)^{\alpha} u_j\to v$ weakly in
$\Hb^{\tilde
  r'+1+\delta',l}$ and strongly in $\Hb^{\tilde
  r'+1,l-\delta}$. By \eqref{eq:b-res-imp-est}
then $\liminf \|(x+\sigma_j)^{\alpha} u_j\|_{\Hb^{\tilde
    r',l-\delta}}\geq C^{-1}>0$, so $v\neq 0$ by the strong convergence. On the other hand,
$$
(x+\sigma_j)^{\alpha-1} \hat P(\sigma_j)u_j\to x^{\alpha-1}\hat P(0) x^{-\alpha}v
$$
in $\Hb^{\tilde r'-1,l-\delta}$ as
\begin{equation*}\begin{aligned}
&(x+\sigma_j)^{\alpha-1}\hat P(\sigma_j)u_j-x^{\alpha-1}\hat P(0) x^{-\alpha}v\\
&=(x+\sigma_j)^{\alpha-1} (\hat P(\sigma_j)-\hat
P(0)) (x+\sigma_j)^{-\alpha}  ((x+\sigma_j)^{\alpha}  u_j)\\
&\qquad\qquad+(x+\sigma_j)^{\alpha-1}\hat
P(0)(x+\sigma_j)^{-\alpha}((x+\sigma_j)^{\alpha}u_j-v)\\
&\qquad\qquad+(x+\sigma_j)^{\alpha-1}\hat
P(0)((x+\sigma_j)^{-\alpha}-x^{-\alpha})v\\
&\qquad\qquad+((x+\sigma_j)^{\alpha-1}-x^{\alpha-1})\hat P(0)x^{-\alpha}v
\end{aligned}\end{equation*}
since $\hat P(\sigma_j)\to \hat P(0)$ as bounded operators in $\cL(\Hb^{\tilde
  r'+1,l-\delta-\alpha},\Hb^{\tilde r'-1,l+1-\delta-\alpha})$,
$(x+\sigma_j)^{-\alpha}$, resp.\ $(x+\sigma_j)^{\alpha-1}$ are uniformly
bounded between $\Hb$
spaces whose weight differs by $-\alpha$, resp.\ $\alpha-1$, and $(x+\sigma_j)^{-\alpha}u_j$ converges to $v$
(thus is bounded) in $\Hb^{\tilde
  r'+1,l-\delta}$, while finally
$(x+\sigma_j)^{\kappa}-x^{\kappa}=\sigma_j\int_0^1\kappa(x+t\sigma_j)^{\kappa-1}\,dt$
shows that this difference goes to $0$ (due to the $\sigma_j$ factor)
as an operator being weighted spaces whose order differs by
$\kappa-1$. Thus, $\hat P(0)x^{-\alpha} v=0$, so $u=x^{-\alpha} v$ is a non-trivial element of the
nullspace of $\hat P(0)$ on $\Hb^{\tilde r-1/2,l-\alpha}$, with $l-\alpha\in(\nuc_-,\nuc_+)=(-1-\frac{n-2}{2},-1+\frac{n-2}{2})$, which contradicts our
assumptions. This proves \eqref{eq:b-res-est}, as desired.
\end{proof}

We now return to the proof of Proposition~\ref{prop:zero-normal}.

The main claim is that \eqref{eq:zero-normal-to-show} follows
from the basic scattering estimate, namely the limiting absorption
principle on corresponding spaces at energy $1$, applied on a
scattering manifold which also has a conic point in its interior;
we now recall this. 

We
are thus working with b-Sobolev spaces at both ends, denoted by
$\Hb^{\tilde r,l,\nu}$, $\nu$ the weight at the conic point, using the
scattering density at the sc-end, the conic density at the conic point, for the $L^2$-space (with trivial weight, i.e.\
$\nu=l=0$), or the second microlocal sc-b space at the scattering
end, but the standard b-space at the conic point, denoting it by
$\Hscb^{s,r,l,\nu}$, with $\nu$ the weight at the conic point. We
recall that for conic points indicial roots are the poles of the
Mellin transformed normal operator family in the b-pseudodifferential
algebra (normalized to make the family dilation invariant), and we
denote the `conic point central weight interval', see the beginning of
the section, by $(\nuc_-,\nuc_+)$
which
for
the actual Laplacian is
$$
(\nuc_-,\nuc_+)=\Big(1-\frac{n-2}{2},1+\frac{n-2}{2}\Big).
$$
The latter corresponds to the indicial
roots at the conic point. The conic density is $s^{n-1}\,ds\,dy$ near
$s=0$, so $s^{-n/2}$ is barely not in the weight $0$ space, hence the
allowed exponents are between those of $s^{2-n}$ and $s^0$.
We recall the
limiting absorption principle in the present,
conjugated setting:

\begin{prop}[Limiting absorption principle, \cite{Vasy:Limiting-absorption-lag}]
For $\tilde P(\varsigma)$ of conjugated spectral family type, with
a scattering and a conic end, $(\nuc_-,\nuc_+)$ the central weight interval for the conic point, and
the unconjugated operator being formally self-adjoint, and for
$\varsigma\neq 0$, $\im\varsigma\geq 0$, we have
$$
\|v\|_{\Hscb^{s,r,l,\nu}}\leq C\|\tilde P(\varsigma) v\|_{\Hscb^{s-2,r+1,l+1,\nu-2}},
$$
and
its slightly lossy (in terms of differentiability, in other aspects
its equivalent to the above version) version
\begin{equation}\label{eq:limabs-b-est}
\|v\|_{\Hb^{\tilde r,l,\nu}}\leq C\|\tilde P(\varsigma) v\|_{\Hb^{\tilde
    r,l+1,\nu-2}}.
\end{equation}
These estimates are valid if, first of all, $\nu\in
(\nuc_-,\nuc_+)$ and, for the sc-b case either $r>-1/2$ and $l<-1/2$, or $r<-1/2$
and $l>-1/2$, while for the b case either $\tilde r+l>-1/2$ and
$l<-1/2$, or $\tilde r+l<-1/2$ and $l>-1/2$.

If we drop the assumption of the unconjugated operator being formally
self-adjoint, these are replaced by Fredholm estimates, with $r>-1/2$
replaced by $r>-1/2+\frac{\im(\beta+\gamma)}{2}$, $l<-1/2$ replaced by
$l<-1/2+\frac{\im(\beta-\gamma)}{2}$, $\tilde r+l>-1/2$ replaced by
$\tilde r+l>-1/2+\frac{\im(\beta+\gamma)}{2}$, and similarly with the
inequalities reversed. If the
skew-adjoint part is sufficiently small, the above invertibility estimates remain valid.
\end{prop}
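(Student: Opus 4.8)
The statement is, up to the presence of an interior conic point, the limiting absorption result of \cite{Vasy:Limiting-absorption-lag}, so the plan is to prove it by the same positive-commutator and propagation-of-singularities package, with the standard conic-point normal-operator analysis bolted on at the new end. The conjugation by $e^{i\varsigma/X}$ is arranged precisely so that the outgoing `spherical wave' asymptotics sit at the zero section of the b-cotangent bundle over the scattering end --- i.e.\ Lagrangian regularity becomes b-differential regularity --- and so that fiber infinity is elliptic for $\tilde P(\varsigma)$. Thus I would assemble the estimate from four kinds of microlocal estimates on the resolved (second microlocal at the zero section) phase space: (a) elliptic estimates away from $\Char(\tilde P(\varsigma))$, in particular at fiber infinity; (b) real-principal-type propagation inside $\Char(\tilde P(\varsigma))$, where $\im\tilde p(\varsigma)$ has a definite sign because $\im\varsigma\ge 0$, so one propagates regularity in one direction along $H_{\re\tilde p(\varsigma)}$, from the incoming region toward the outgoing radial set; (c) radial-point estimates at the zero section over the scattering end, where, the set being radial, the only source of positivity in the commutator is the weight, and a positive-commutator estimate with a regularizer as in Remark~\ref{rem:regularizer-choice} closes provided the sc-decay/b-decay orders lie on the correct side of the threshold $-1/2$, shifted by $\frac{\im(\beta\pm\gamma)}{2}$ --- exactly the stated inequalities on $(r,l)$, resp.\ $(\tilde r+l,l)$; and (d) normal-operator estimates at the two ends.

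For (d), at the scattering end the b-normal operator of $\tilde P(\varsigma)$ is non-degenerate for $\varsigma\neq 0$, its leading part being the first-order operator with invertible normal operator recalled around \eqref{eq:real-princ-b-orders} (using $l<-1/2$, up to the $\frac{\im(\beta-\gamma)}{2}$ shift); at the conic point the normal operator is the Mellin family $\taub^2+\beta\taub+\beta'+\Delta_{\pa X}+\left(\frac{n-2}{2}\right)^2$ plus lower order terms, which is invertible exactly when the conic weight $\nu$ lies in the central interval $(\nuc_-,\nuc_+)$, see \eqref{eq:central-conic-interval-1}. Patching (a)--(d) together yields, with a compact (lower order, better weighted at both ends) error, $\|v\|_{\Hscb^{s,r,l,\nu}}\le C(\|\tilde P(\varsigma)v\|_{\Hscb^{s-2,r+1,l+1,\nu-2}}+\|v\|_{\Hscb^{-N,-N,-N,-N}})$, and the lossy b-version \eqref{eq:limabs-b-est} follows by discarding the separate bookkeeping of the sc-differential order, which is harmless since $\tilde P(\varsigma)$ is b-elliptic at fiber infinity.

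Finally I would upgrade to the stated Fredholm and invertibility conclusions by the usual functional-analytic argument: the estimate with compact error shows $\tilde P(\varsigma)$ is semi-Fredholm on the indicated spaces; running the analogous argument for $\tilde P(\varsigma)^*$ (which, as $\im\bar\varsigma\le 0$, propagates the opposite way, toward the incoming radial set, on the dual scale) gives finite-dimensional cokernel, hence the Fredholm property, with index zero obtained by deforming $\varsigma$ into $\{\im\varsigma>0\}$, where the exponential weight $e^{-\im\varsigma/X}$ built into $e^{i\varsigma/X}$ makes $\tilde P(\varsigma)$ directly invertible. When the unconjugated operator is formally self-adjoint and $\varsigma$ is real and nonzero, a kernel element would be forced to be simultaneously incoming and outgoing, hence to decay faster than any power, hence to vanish by a Rellich-type unique continuation argument (absence of positive-energy eigenvalues/resonances), so the index-zero operator is invertible; invertibility persists under a sufficiently small skew-adjoint perturbation by a perturbation argument. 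The main obstacle is step (c): the limitation on regularizability at a radial point is tied precisely to the threshold value $-1/2$, so the regularizer $f_\ep$ must be arranged so that the regularized commutator lands, uniformly in $\ep$, on the correct side of that threshold --- this is the one place the order inequalities cannot be relaxed --- together with organizing the scattering-end and conic-point normal-operator analyses into a single coherent system of weighted b- and second microlocal Sobolev spaces; since the two ends are geometrically disjoint there is no interaction between them, so this last part is bookkeeping rather than a genuine difficulty.
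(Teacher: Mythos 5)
Your symbolic package (elliptic estimates, propagation, radial-point estimates at the zero section, scattering-end normal operator) is essentially the machinery of \cite{Vasy:Limiting-absorption-lag}, which the paper does not re-derive but simply cites; the genuinely new content here, addressed in Remark~\ref{rem:limiting-absorption}, is (i) the conic point, which for the \emph{Fredholm} estimates requires only standard b-elliptic theory near the conic point with a weight avoiding the negatives of the imaginary parts of the indicial roots, and (ii) the passage from Fredholm estimates to the stated invertibility. Your outline of the Fredholm part is fine. The gap is in (ii): your invertibility argument never uses the fact that $\nu$ lies in the \emph{central} interval $(\nuc_-,\nuc_+)$, as opposed to an arbitrary interval free of indicial roots, and with only the latter hypothesis the invertibility claim is false (for $\nu$ below the central interval one gets kernel elements with a Green's-function-type leading term at the conic point). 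Concretely, your ``Rellich-type'' step for real $\varsigma$ rests on the boundary pairing $\langle \tilde P u,u\rangle-\langle u,\tilde P u\rangle$, and this pairing has a potential boundary contribution not only from infinity (the outgoing amplitude) but also from the conic point; it vanishes there precisely because, for $\nu\in(\nuc_-,\nuc_+)$, kernel and cokernel elements vanish to more than first order at the conic point relative to the metric $L^2$ weight (and the central interval is symmetric under the duality $\nu\mapsto 2-\nu$, so the adjoint problem has the same good weight). This is exactly the point of Remark~\ref{rem:limiting-absorption}; declaring the conic end ``bookkeeping'' because the two ends are geometrically disjoint misses the one place where the conic weight genuinely enters the invertibility argument.

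A second, related weakness is your claim that for $\im\varsigma>0$ the exponential weight makes $\tilde P(\varsigma)$ ``directly invertible,'' from which you import index zero to the real axis. Undoing the conjugation shows only that kernel elements correspond to exponentially decaying solutions at a non-spectral value of $\varsigma^2$, i.e.\ it kills the kernel; triviality of the cokernel is not immediate on these polynomially weighted sc-b/b spaces and again requires either a separate adjoint pairing argument (using the conic-point weight as above) or, as the paper does, the constancy of the index of the uniformly Fredholm family in $\{\im\varsigma\geq 0,\ \varsigma\neq 0\}$, transported \emph{from} the real axis, where invertibility is first established by the boundary pairing in the formally self-adjoint case. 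As written, your deformation argument extracts index zero from a point where invertibility has not actually been proved, so the logical order needs to be reversed (or the upper-half-plane cokernel argument supplied), after which the small skew-adjoint perturbation statement follows from perturbation stability of the estimates as you say.
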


\begin{rem}\label{rem:limiting-absorption}
In  \cite{Vasy:Limiting-absorption-lag} conic points were not
considered explicitly. However, since the operator is elliptic near
the conic point in the b-sense, near them one has standard b-estimates
provided that the weight is not the negative
of the imaginary part of an indicial root, see e.g.\
\cite{Melrose:Atiyah}, cf.\ also \cite{Vasy:Minicourse} where the
analytically similar cylindrical ends are discussed. Since \cite{Vasy:Limiting-absorption-lag} uses
elliptic estimates away from the scattering end, this is a minor
change that fits seamlessly into the framework. This gives Fredholm
estimates directly. However, for formally self-adjoint operators, thus
with $\beta=0$ (since it is skew-adjoint by hypothesis!), for the
central weight interval for real $\varsigma$ the boundary
pairing formula goes through showing absence of kernel and cokernel,
thus proving the stated version of the proposition. Indeed,
\eqref{eq:central-conic-interval-1} means that
\begin{equation}\begin{aligned}\label{eq:central-conic-interval-1p}
(\nuc_-,\nuc_+)=
\Big(1-\re\sqrt{\Big(\frac{n-2}{2}\Big)^2+\beta'},
1+\re\sqrt{\Big(\frac{n-2}{2}\Big)^2+\beta'}\,\Big),
\end{aligned}\end{equation}
which includes a neighborhood of $1$ as
$\re\beta'>-\Big(\frac{n-2}{2}\Big)^2$ by hypothesis, so elements of
the nullspace are in weighted spaces of more than first order
vanishing at the conic point (in terms of weighted b-spaces, relative
to the metric $L^2$-weight), which suffices for the integration by
parts arguments. Moreover, for $\im\varsigma>0$ the standard pairing
formula for $P(\varsigma)$ showing the absence of nullspace works as
well: note that the undoing the conjugation that gave us $\tilde P(\varsigma)$ means multiplication of
the elements of the kernel by $e^{i\varsigma/X}$ (with $X$ denoting
the conic variable), which are thus
exponentially decaying at the scattering end (and no change at the
conic point). As the family is uniformly Fredholm, in the sense
discussed in \cite{Vasy-Dyatlov:Microlocal-Kerr}, the index is
constant $0$ (as it is such at the real axis where we have
invertibility), so this also shows the absence of cokernel for $\im\varsigma>0$. Notice that it is at this point that the choice of
the central weight interval (as opposed to another weight interval
with negative
free of indicial roots) plays a key role. Since the estimates are
perturbation stable, the conclusions hold for sufficiently small
skew-adjoint parts.
\end{rem}

\begin{proof}[Proof of Proposition~\ref{prop:zero-normal}.]
We
apply the scattering-conic estimate for our model operator which takes the form
$$
N_0(\hat P(\sigma))=\Delta_0+\beta \Big(x^3D_x+i\frac{n-2}{2}x^2\Big)+\beta'x^2-2\sigma x\Big(xD_x+i\frac{n-1}{2}+\frac{\beta-\gamma}{2}\Big)
$$
where $\Delta_0$ is the Laplacian of the {\em exact} conic metric
$g_0=\frac{dx^2}{x^4}+\frac{h_0}{x^2}$, $h_0$ the metric on $\pa X$,
after suitable rescaling via $X=x/|\sigma|$. Under this rescaling,
with $\hat\sigma=\sigma/|\sigma|$,
$N_0(\hat P(\sigma))$ becomes $|\sigma|^2\tilde P(\hat\sigma)$, where the
rescaled operator is
$$
\tilde P(\varsigma)=\Delta_0+\beta \Big(X^3 D_X+i\frac{n-2}{2}X^2\Big)+\beta'X^2-2 \varsigma X\Big(X D_X+i\frac{n-1}{2}+\frac{\beta-\gamma}{2}\Big),
$$
with $\Delta_0$ still the Laplacian of the exact conic metric
$\frac{dX^2}{X^4}+\frac{h_0}{X^2}$, and $\tilde P(\varsigma)$ indeed
enjoys the above limiting absorption principle estimate.

Now, the unweighted b-Sobolev spaces {\em relative to a b-density} are dilation invariant since the
b-derivatives are such, and the b-density corresponds to weights
$-n/2$, resp.\ $n/2$ at the sc-end, resp.\ the conic point, relative to a scattering density in our normalization, so under the map $\kappa_\sigma:(x,y)\to
(x/\sigma,y)$,
$$
\|\kappa_\sigma^* v\|_{\Hb^{\tilde r,-n/2,n/2}}=\|v\|_{\Hb^{\tilde r,-n/2,n/2}}.
$$
(Note that there is no support condition on $v$ here!)
Since
$$
\|v\|_{\Hb^{\tilde r,l,\nu}}=\|(1+X)^{\nu-n/2} (1+X^{-1})^{l+n/2}
v\|_{\Hb^{\tilde r,-n/2,n/2}},
$$
we have
\begin{equation*}\begin{aligned}
\|v\|_{\Hb^{\tilde r,l,\nu}}&=\|(1+X)^{\nu-n/2} (1+X^{-1})^{l+n/2}
v\|_{\Hb^{\tilde r,-n/2,n/2}}\\
&=\|(1+x/\sigma)^{\nu-n/2} (1+\sigma/x)^{l+n/2}
\kappa_\sigma^*v\|_{\Hb^{\tilde r,-n/2,n/2}}\\
&=\|(1+x/\sigma)^{\nu+l}
x^{-l-n/2}\sigma^{l+n/2}\kappa_\sigma^*v\|_{\Hb^{\tilde r,-n/2,n/2}}\\
&=|\sigma|^{l+n/2} \|(1+x/\sigma)^{\nu+l}\kappa_\sigma^*v\|_{\Hb^{\tilde r,l,-l}}.
\end{aligned}\end{equation*}
Thus, \eqref{eq:limabs-b-est} becomes
\begin{equation*}\begin{aligned}
|\sigma|^{l+n/2}&
\|(1+x/\sigma)^{\nu+l}\kappa_\sigma^*v\|_{\Hb^{\tilde r,l,-l}}\\
&=\|v\|_{\Hb^{\tilde r,l,\nu}}\\
&\leq C\|\tilde P(1) v\|_{\Hb^{\tilde
    r,l+1,\nu-2}}\\
&=C|\sigma|^{l+1+n/2}\|(1+x/\sigma)^{\nu+l-1}\kappa_\sigma^*\tilde P(1) v\|_{\Hb^{\tilde r,l+1,-l-1}}\\
&=C|\sigma|^{-2}|\sigma|^{l+1+n/2}\|(1+x/\sigma)^{\nu+l-1} N_0(\hat P(\sigma)) \kappa_\sigma^*v\|_{\Hb^{\tilde 
    r,l+1,-l-1}}\\
&=C|\sigma|^{l+n/2}\|(1+x/\sigma)^{\nu+l}(x+\sigma)^{-1} N_0(\hat P(\sigma)) \kappa_\sigma^*v\|_{\Hb^{\tilde 
    r,l+1,-l-1}}, 
\end{aligned}\end{equation*}
which in summary gives
\begin{equation}\begin{aligned}\label{eq:rescaled-b-est}
\|(1+x/\sigma)^{\nu+l}\kappa_\sigma^*v\|_{\Hb^{\tilde r,l,-l}}&\leq C\|(1+x/\sigma)^{\nu+l} (x+\sigma)^{-1} N_0(\hat P(\sigma)) \kappa_\sigma^*v\|_{\Hb^{\tilde
    r,l+1,-l-1}},
\end{aligned}\end{equation}
which is exactly \eqref{eq:zero-normal-to-show}, once we replace
$\kappa_\sigma^*v$ by $v$, $\nu+l$ by $\alpha$ and keep in mind that
due to the support condition, only the
weight $l+1$ on the sc-end is relevant.

The computation for the sc-b-resolved spaces is completely similar:
the additional microlocal weights involve $\tau=x\taub$ and
$\mu=x\mub$ which means that the corresponding weights using
$\tausc=X\taub$ and $\musc=X\mub$ yield
$\|\kappa_\sigma^*
v\|_{\Hscbr^{s,r,-n/2,n/2}}=\|v\|_{\Hscb^{s,r,-n/2,n/2}}$, so
following the above computation gives
\begin{equation}\begin{aligned}\label{eq:rescaled-sc-b-est}
&\|(1+x/\sigma)^{\nu+l}\kappa_\sigma^*v\|_{\Hb^{s,r,l,-l}}\\
&\qquad\leq C\|(1+x/\sigma)^{\nu+l} (x+\sigma)^{-1} N_0(\hat P(\sigma)) \kappa_\sigma^*v\|_{\Hscbr^{s-2,
    r,l+1,-l-1}},
\end{aligned}\end{equation}
completing the proof.
\end{proof}

\section{Zero energy nullspace}\label{sec:zero}
We now discuss, assuming $\beta=0$, $\beta'=0$ and $\gamma=0$, what happens in the presence of non-trivial nullspace
of $\hat P(0)$, i.e.\ of $P(0)$, on the relevant function space,
$\Hb^{\infty,l-1/2}$. As discussed in \cite{Vasy:Zero-energy}, one issue is that the domain of $\hat P(\sigma)$ varies with $\sigma$ in a
serious way in that, depending on the dimension, $\Ker \hat P(0)$ need not
lie in the domain of $\hat P(\sigma)$; another issue is that the
resolution we introduced here is not very easy to use for perturbation
theory directly (since one needs a smooth, continuous, etc., {\em family} of operators). As in \cite{Vasy:Zero-energy},
and indeed following a long tradition in scattering theory,
this can be remedied by letting $\check P(\sigma)$ be a perturbation
of $\hat P(\sigma)$ in the same class but with $\check P(0)$
invertible, where one can arrange this with
$$
V(\sigma)=\check P(\sigma)-\hat
P(\sigma)
$$
even compactly supported in the interior of $X$, though
this is not necessary. Then one considers
$$
\hat P(\sigma)\check P(\sigma)^{-1}: \cY\to\cY,\ \cY=\Hb^{\tilde r-1,l+3/2};
$$
its invertibility at $\sigma=0$ is equivalent to that of $\hat
P(0)$ on the above discussed space. We have
$$
\hat P(\sigma)\check P(\sigma)^{-1}=\Id-V(\sigma)\check P(\sigma)^{-1}.
$$
The nullspace of this is the image of that of $\hat P(0)$ under
$\check P(0)$, while the $L^2$-orthocomplement of the range is the
nullspace of $P(0)^*$ in $\cY^*$.
One can decompose $\cY$ into $\check P(0) \Ker \hat P(0)$ and its
orthocomplement, and similarly on the target space side into $\Ran
\hat P(0)$ and $\Ker \hat P(0)^*$. Since $\hat P(0)u=0$ means $\check
P(0)u=V(0)u$, we have $\check P(0)|_{\Ker \hat P(0)}=V(0) |_{\Ker \hat
  P(0)}$. Thus, the entries of the block matrix from $\Ker \hat
P(0)\check P(0)^{-1}=\check P(0) \Ker\hat P(0)$ are
\begin{equation}\begin{aligned}\label{eq:Ker-comp-8}
\hat
P(\sigma)\check P(\sigma)^{-1}\check P(0)|_{\Ker \hat P(0)}&=\check
P(0)-V(\sigma)\check P(\sigma)^{-1}\check P(0)|_{\Ker \hat P(0)}\\
&=V(0)-V(\sigma)\check P(\sigma)^{-1}\check P(0)|_{\Ker \hat P(0)}\\
&=V(0)-V(\sigma)+V(\sigma) (\check P(0)^{-1}-\check P(\sigma)^{-1})V(0)|_{\Ker \hat P(0)}.
\end{aligned}\end{equation}
Similarly, for $\Ker \hat P(0)^*$, we have
\begin{equation*}\begin{aligned}
(\check P(\sigma)^{-1})^*\hat P(\sigma)^*|_{\Ker \hat
  P(0)^*}&=\Id-(\check P(\sigma)^{-1})^* V(\sigma)^*|_{\Ker \hat
  P(0)^*}\\
&=((\check P(0)^{-1})^*-(\check P(\sigma)^{-1})) V(\sigma)^*\\
&\qquad+(\check P(0)^{-1})^*(V(0)-V(\sigma)) |_{\Ker \hat
  P(0)^*}.
\end{aligned}\end{equation*}
Now, the argument would proceed by using that $V(0)-V(\sigma)$ and
$\check P(0)^{-1}-\check P(\sigma)^{-1}$ (and also the difference
of adjoints) are small as $\sigma\to 0$. This is clear in the case of
$V(0)-V(\sigma)$, as it is $O(\sigma)$ with suitably decaying
coefficients, but it is much less so for $(\check P(0)^{-1})-(\check
P(\sigma)^{-1})$.

Formally (thus imprecisely),
$$
\check P(0)^{-1}-\check P(\sigma)^{-1}=\check P(0)^{-1}(\check
P(\sigma)-\check P(0))\check P(\sigma)^{-1}=\check P(\sigma)^{-1}(\check
P(\sigma)-\check P(0))\check P(0)^{-1},
$$
but the composition on the right does not make sense as $\check
P(0)^{-1}$ loses 2 orders of decay, and $\check
P(\sigma)-\check P(0)$ is $O(\sigma)$ with a gain of one order of
decay, which is not sufficient in general for the result to be in the
domain of $P(\sigma)^{-1}$. {\em Notice that this is still better than the
unconjugated case, where $\check
P(\sigma)-\check P(0)$ is $O(\sigma^2)$ but gains no decay at all.} However, when applied to $V(0)|_{\Ker \hat
  P(0)}$, $\check P(0)^{-1}$ gives the element of $\Ker \hat
  P(0)$. Now, the great advantage of the present conjugated setting is
  that in fact $\check
P(\sigma)-\check P(0)|_{\Ker \hat
  P(0)}$ is $O(\sigma)$ with 2 orders of decay gained. This follows
from this difference, modulo terms with the claimed 2 orders of decay,
being
$$
-2\sigma x(xD_x+i(n-1)/2).
$$
Now, if $\lambda_j$ are the eigenvalues of $\Delta_{\pa X}$, then
elements of $\Ker\hat P(0)$ have an expansion starting with
$\frac{n-2}{2}+\sqrt{(\frac{n-2}{2})^2+\lambda_j}$. Thus, if the
$\lambda_j$ are the spherical eigenvalues (thus non-negative
integers), all terms, when an $O(x^2)$ b-differential operator is
applied to them, and all, except the $\lambda_0=0$
term in case $n=3$, when an $O(x)$ b-differential operator is applied to them, are
in the full range of decay orders for the domain of $\hat P(\sigma)$, as they are in
$x^2\Hb^{\infty,-1+\frac{n-2}{2}-\ep}=\Hb^{\infty,\frac{n}{2}-\ep}$, resp.\ $x\Hb^{\infty,-1+\frac{n-2}{2}+1-\ep}=\Hb^{\infty, \frac{n}{2}-\ep}$,  
and the acceptable range
of weights for the domain start (on the most decaying end) at $\frac{3}{2}-\ep$, so
$\frac{n}{2}\leq \frac{3}{2}$ shows the claim. For the remaining
$\lambda_0$ case with an $O(x)$ b-differential operator applied to it
the result is in $x\Hb^{\infty,-1+\frac{n-2}{2}-\ep}=\Hb^{\infty,
  \frac{n}{2}-1-\ep}$, which is in the acceptable range
of weights for the domain which starts, on the least decaying end, at
$-\frac{n-4}{2}+\ep$, so provided $-\frac{n-4}{2}<\frac{n}{2}-1$, which holds
for $n>3$, shows that for $n\geq 4$ in fact all terms make sense.
It remains to consider the most
singular case, $n=3$, but in fact even then this
holds for the $\lambda_0$ term since $(xD_x+i(n-1)/2)$ annihilates the
leading order asymptotics as $\frac{n-1}{2}=n-2$ in this case. Thus,
assuming our formal computation can be justified, all of the terms,
other than that of the big block ($00$ block) are $O(|\sigma|)$. Moreover,
if the the $11$ block is of the form $\sigma$ times an
invertible operator, Gaussian elimination shows that the whole
operator is invertible for $\sigma\neq 0$, and the inverse has the
form
\begin{equation}\label{eq:inv-block-bounds}
\begin{pmatrix} O(1)&O(1)\\O(1)&O(|\sigma|^{-1})\end{pmatrix}.
\end{equation}

Now, unlike in the unconjugated case, when for $\sigma\neq 0$ the
computation makes sense directly by wave front set considerations, we
need to explicitly justify it here even then. Thus, we need to consider
\begin{equation*}\begin{aligned}
\check P(0)^{-1}-\check P(\sigma)^{-1}&=\check P(0)^{-1}-\check
P(\sigma)^{-1}(\check P(0)\check P(0)^{-1})
\end{aligned}\end{equation*}
and reparenthesize the last term, which we do by inserting a
regularizer (in terms of decay) $\chi_\ep=\chi(./\ep)$, $\chi\geq 0$ smooth,
$\chi\equiv 1$ on $[1,\infty)$, supported in $(0,\infty)$.
Thus, with
the limits being strong operator limits,
\begin{equation*}\begin{aligned}
\check P(0)^{-1}-\check P(\sigma)^{-1}&=\lim_{\ep\to 0}(\check P(\sigma)^{-1} \check P(\sigma))\chi_\ep \check P(0)^{-1}-\check
P(\sigma)^{-1}\chi_\ep (\check P(0) \check P(0)^{-1})\\
&=\lim_{\ep\to 0}\Big((\check P(\sigma)^{-1} \check P(\sigma))\chi_\ep \check P(0)^{-1}-(\check
P(\sigma)^{-1}\chi_\ep \check P(0)) \check P(0)^{-1}\Big)\\
&=\lim_{\ep\to 0}\Big(\check P(\sigma)^{-1} \check P(\sigma)\chi_\ep -\check
P(\sigma)^{-1}\chi_\ep \check P(0) \Big) \check P(0)^{-1}\\
&=\lim_{\ep\to 0}\Big(\check P(\sigma)^{-1} \check P(\sigma) \chi_\ep+\check P(\sigma)^{-1} [\check P(0),\chi_\ep] -\check
P(\sigma)^{-1}\check P(0) \chi_\ep \Big) \check P(0)^{-1}\\
&=\lim_{\ep\to 0}\check P(\sigma)^{-1} \Big(\check P(\sigma) \chi_\ep+
[\check P(0),\chi_\ep] -\check P(0) \chi_\ep \Big) \check P(0)^{-1}\\
&=\check P(\sigma)^{-1} \lim_{\ep\to 0}\Big((\check P(\sigma) -\check
P(0)) \chi_\ep+ [\check P(0),\chi_\ep] \Big) \check P(0)^{-1}
\end{aligned}\end{equation*}
which on $\Ker\hat P(0)$, where we already saw that the first term indeed
makes sense by direct domain considerations, is
$$
\check P(\sigma)^{-1} (\check P(\sigma) -\check
P(0)) \check P(0)^{-1}
+\check P(\sigma)^{-1}
\lim_{\ep\to 0}[\check P(0),\chi_\ep] \check P(0)^{-1},
$$
with similar considerations applying to $\Ker \hat P(0)^*$. Now, the
commutator is uniformly (in $\ep$) in $\Psib^{2,-2}(X)$, and is tending
to $0$ in the strong topology
$$
\cL(\Hb^{\tilde r,l-1/2},\Hb^{\tilde
  r-2,l+3/2}),
$$
so this term vanishes in the limit.
This completes the justification
of our computation, and thus that
$$
\check P(0)^{-1}-\check P(\sigma)^{-1}
$$
is $O(|\sigma|)$. This also shows that modulo $O(|\sigma|^2)$, the
result is
$$
\check P(0)^{-1} (\check P(\sigma) -\check
P(0)) \check P(0)^{-1}.
$$

In summary then, mod $O(|\sigma|^2)$, \eqref{eq:Ker-comp-8} becomes
\begin{equation}\begin{aligned}\label{eq:Ker-comp-8b}
&V(0)-V(\sigma)+V(\sigma) (\check P(0)^{-1}-\check
P(\sigma)^{-1})V(0)|_{\Ker \hat P(0)}\\
&=V(0)-V(\sigma)+\check P(\sigma) -\check
P(0) |_{\Ker \hat P(0)}\\
&=\hat P(\sigma)-\hat P(0) |_{\Ker \hat P(0)}.
\end{aligned}\end{equation}
Thus, if this pairing between $\Ker P(0)$ and $\Ker P(0)^*$ is a
non-degenerate multiple of $\sigma$, we have that the inverse
satisfies bounds as in \eqref{eq:inv-block-bounds}.

\def\cprime{$'$} \def\cprime{$'$}

\end{document}